\def\nt{\overset{\rotatebox[origin=B]{180}{$\scriptscriptstyle \sphericalangle$}}{\to}  }
\def\bpm{\begin{pmatrix}}
	\def\epm{\end{pmatrix}}
\newcommand{\bee}{\begin{equation}}
\newcommand{\ee}{\end{equation}}
\newcommand{\ba}{\begin{eqnarray}}
\newcommand{\ea}{\end{eqnarray}}
\newcommand{\bi}{\begin{itemize}}
\newcommand{\ei}{\end{itemize}}
\newcommand{\bn}{\begin{enumerate}}
\newcommand{\en}{\end{enumerate}}
\newcommand{\bbm}{\begin{bmatrix}}
\newcommand{\ebm}{\end{bmatrix}}
\newcommand{\bp}{\begin{proof}}
\newcommand{\ep}{\end{proof}}
\newcommand{\nn}{\nonumber}
\newcommand{\mr}{\ensuremath{\mathrm}}
\newcommand{\scr}{\ensuremath{\mathscr}}
\newcommand{\mbf}{\ensuremath{\mathbf}}
\newcommand{\mc}{\ensuremath{\mathcal}}
\newcommand{\ov}{\ensuremath{\overline}}
\newcommand{\sm}{\ensuremath{\setminus}}
\newcommand{\wit}{\ensuremath{\widetilde}}
\newcommand{\fdot}{\,\cdot\,}
\newcommand{\Ga}{\ensuremath{\Gamma}}
\newcommand{\la}{\ensuremath{\lambda }}
\newcommand{\spn}{\operatorname{span}}
\def\C{\mathbb{C}}
\def\R{\mathbb{R}}
\def\D{\mathbb{D}}
\def\T{\mathbb{T}}
\def\N{\mathbb{N}}
\newcommand{\e}{\varepsilon}
\newcommand{\f}{\varphi}
\newcommand{\balpha}{\boldsymbol{\alpha} }
\newcommand{\bmu}{\boldsymbol{\mu}}
\newcommand{\bnu}{\boldsymbol{\nu}}
\newcommand{\be}{\mathbf{e}}
\newcommand{\bff}{\mathbf{f}}
\newcommand{\bh}{\mathbf{h}}
\newcommand{\mathbff}{\mathbf{f}}
\newcommand{\bO}{\mathbf{0}}
\newcommand{\bS}{\mathbf{S}}
\newcommand{\bx}{\mathbf{x}}
\newcommand{\by}{\mathbf{y}}
\newcommand{\dd}{\mathrm{d}}
\newcommand{\wt}{\widetilde}
\newcommand{\clos}{\operatorname{clos}}
\newcommand{\cK}{\mathcal{K}}
\newcommand{\rank}{\operatorname{rank}}
\newcommand{\tr}{\operatorname{tr}}
\newcommand{\im}{\operatorname{Im}}
\newcommand{\re}{\operatorname{Re}}
\newcommand{\CAD}{\operatorname{CAD}}
\newcommand{\cC}{\mathcal{C}}
\newcommand{\cU}{\mathcal{U}}
\newcommand{\1}{\mathbf{1}}
\newcommand{\ip}[2]{\ensuremath{\langle   {#2}  ,  {#1} \rangle}}
\newcommand{\ipcn}[2]{\ensuremath{\left(  {#2},  {#1} \right) _{\mathbb{C} ^n}}}
\newcommand{\ran}{\operatorname{Ran}}
\newcommand{\ci}[1]{_{ {}_{\scriptstyle #1}}}
\newcommand{\ti}[1]{_{\scriptstyle \text{\rm #1}}}
\numberwithin{equation}{section}
\numberwithin{subsection}{section}
\newtheorem{thm}{Theorem}[section]
\newtheorem{lemma}[thm]{Lemma}
\newtheorem{prop}[thm]{Proposition}
\newtheorem{cor}[thm]{Corollary}
\newtheorem{thm*}{Theorem}
\theoremstyle{definition}
\newtheorem{defn}[thm]{Definition}
\theoremstyle{remark}
\newtheorem{remark}[thm]{Remark}
\newtheorem*{remark*}{Remark}
\renewcommand{\labelenumi}{\textup{(\roman{enumi})}}
\newcounter{vremennyj}
\newcommand\cond[1]{\setcounter{vremennyj}{\theenumi}\setcounter{enumi}{#1}\labelenumi\setcounter{enumi}{\thevremennyj}}
\title[Matrix-valued AC measures and CAD]{Matrix-valued  Aleksandrov--Clark measures and Carath\'{e}odory angular derivatives}
\author[C.~Liaw]{Constanze Liaw}
\address{Department of Mathematical Sciences\\
	University of Delaware and\\
	CASPER\\
	Baylor University}
\email{liaw@udel.edu}
\author[R.~T.~W.~Martin]{Robert T.~W.~Martin}
\address{Department of Mathematics\\
	University of Manitoba}
\email{Robert.Martin@umanitoba.ca}
\author[S.~Treil]{Sergei Treil}
\address{Department of Mathematics\\
	Brown Univeristy}
\email{treil@math.brown.edu}
\thanks{The work of C.~Liaw was supported by the National Science Foundation under the grant DMS-1802682. The work of S.~Treil was supported by the National Science Foundation under the grant DMS-1856719.}
\keywords{Clark theory, matrix-valued functions, 
	Carath\'{e}odory angular derivative, 
	reproducing kernel Hilbert space, vector mutual singularity}
\subjclass{30H10, 30H05,  47B32, 47B38, 46E22}
\begin{document}

\begin{abstract}
This paper deals with families of matrix-valued Aleksandrov--Clark measures $\{\bmu^\alpha\}_{\alpha\in\cU(n)}$,  corresponding to purely contractive $n\times n$ matrix functions $b$ on the unit disc of the complex plane. We do not make other apriori assumptions on $b$. In particular, $b$ may be non-inner and/or non-extreme. The study of such families is mainly motivated from applications to unitary finite rank perturbation theory.

A description of the absolutely continuous parts of $\bmu^\alpha$ is a rather straightforward generalization of the well-known results for the scalar case ($n=1$).

The results and proofs for the singular parts of matrix-valued $\bmu^\alpha$ are more complicated than in the scalar case, and constitute the main focus of this paper.
We discuss matrix-valued Aronszajn--Donoghue theory concerning the singular parts of the Clark measures,  as well as Carath\'{e}odory angular derivatives of matrix-valued functions and their connections with atoms of $\bmu^\alpha$. These results are far from being straightforward extensions from the scalar case: new phenomena specific to the matrix-valued case appear here. New ideas, including the notion of directionality, are required in statements and proofs.
\end{abstract}

\maketitle

\setcounter{tocdepth}{1}
\tableofcontents

\setcounter{tocdepth}{3}	

\section{Introduction}
In a seminal paper \cite{Clark1972}, D.~Clark  initiated studying families of (scalar, finite, positive and regular) Borel measures $\mu^\alpha$ on the unit circle that correspond to purely contractive analytic functions $b$ on the unit disc $\D$. Namely, for $\alpha\in\T=\partial\D$ the measure $\mu^\alpha$ was defined as the unique measure satisfying 
\begin{align}\label{e-scalarHerglotz}
\frac{\alpha+b(z)}{\alpha-b(z)}
=
i\im{\frac{\alpha+b(0)}{\alpha-b(0)}}
+\int\ci{\partial \D}\frac{\zeta+z}{\zeta-z}\,
\mu^\alpha(\dd \zeta),
\qquad \qquad|\alpha|=1,
\end{align}
(the function in the right hand side is Herglotz, i.e.~it has positive real part, and the above formula is just  the classical Herglotz representation formula).

D.~Clark himself considered the case when $b$ is an inner function, in which case the measures $\mu^\alpha$ are purely singular. In the 1980's and 1990's, A.~B.~Aleksandrov \cite{Aleksandrov, Aleksandrov2, Aleksandrov3, Aleksandrov4, Aleksandrov5} proved many deep results regarding the families of the measures $\mu^\alpha$ (for general, not necessarily inner $b$), which therefore are referred to as Aleksandrov--Clark measures. D.~Sarason \cite{Sarason-dB} explored  the connections between the Clark measures and the corresponding de Branges--Rovnyak spaces. Many deep results about finer properties of the Clark measures were obtained by A.~Poltoratski, \cite{NONTAN, Polt-Sar}.

The Clark measures $\mu^\alpha$ 
are exactly the spectral measures of the unitary rank one extensions of a model operator with the characteristic function $b$.  This was originally shown by Clark \cite{Clark1972} for inner functions $b$; in fact, finding the corresponding spectral measures and investigating their properties was one of the main goals of \cite{Clark1972}.   
For general contractive functions $b$ it was shown significantly later in \cite{LT_JdA} from a different point of view; the measures $\mu^\alpha$ in this case can have non-trivial 
absolutely continuous parts.% 
\footnote{The measures obtained in \cite{LT_JdA}  coincide with the above measures $\mu^\alpha$ if $b(0)=0$; if $b(0)\ne 0$ they differ by a normalizing factor.}

In this paper we are dealing with matrix-valued pure contractions $b$. The analog \eqref{e-Herglotz} of the Herglotz representation formula then defines a family of matrix-valued measures $\bmu^\alpha$ that also has operator theoretic meaning.

We then study the relationship between the properties of the matrix-valued contractions $b$ and their associated  Aleksandrov--Clark family of matrix-valued measures $\bmu^\alpha$. 
As it was mentioned above, in the scalar setting, this topic has been well-developed.  
While there was some development in the matrix-valued case \cite{Polt-Kap_JFA_2006, Elliott2010, Martin-uni, LTFinite}, many fine properties of the matrix-valued Aleksandrov--Clark measures are still not well-understood.

While the characterization of the absolutely continuous part of the matrix-valued Clark measure is pretty simple, capturing the singular part of $\bmu^\alpha$ is more subtle. One of the results of this paper is the description of the \emph{directional support (carrier)} of the singular part of $\bmu^\alpha$; new phenomenon of the directionality appears here. 
In Section \ref{s-Nevanlinna}, we derive an easy Nevanlinna type formula, expressing point masses of $\bmu^\alpha$ in terms of $b$.

In scalar Aleksandrov--Clark theory  
the Aronszajn--Donoghue Theorem \cite{Aronszajn, Donoghue} states that the singular parts of two distinct measures from the same family must be mutually singular.  
Trivially, such a statement cannot be true for the matrix-valued measures $\bmu\ti{s}^\alpha$. However, if one  interprets the mutual singularity as the  \emph{vector mutual singularity} introduced in \cite{LT_JST}, the corresponding result is true, see Corollary \ref{c-orthog}. 
This result is similar (although formally not equivalent) to an earlier result for finite rank perturbations of self-adjoint operators \cite[Theorem 6.2]{LT_JST}. Note that the proof in this paper is also completely different from one in \cite{LT_JST}.

In Section \ref{s-traces}, we use the vector mutual singularity to investigate the ``real'' mutual singularity. 
We show that the exceptional set where the ``real'' mutual singularity fails is small,  see Theorem \ref{t:countable_A-D} below. Again the result is similar to one for finite rank perturbations in \cite[Theorem  6.1]{LT_JST}.

Sections \ref{s-CAD} through \ref{s-CaraPoint} are devoted to extending the notion of Carath\'{e}odory on angular derivative to the matrix-valued setting. The work of Carath\'{e}odory on angular derivative plays an important role in the classical complex analysis; there are deep connections with composition operators, see \cite{Shapiro_book_comp, Cowen-MacCluer_book}, the de Branges--Rovnyak spaces, see \cite{Sarason-dB}, theory of rank one perturbations.

We introduce a directional Carath\'{e}odory condition in Definition \ref{d-CCond}. As in \cite[Chapter VI]{Sarason-dB} and \cite[Section 5.1]{Martin-uni}, this condition can be related to properties of the de Branges--Rovnyak space of $b$, see Proposition \ref{p-equivalences}.
We further introduce the notion of a Carath\'{e}odory angular derivative (CAD) on subspaces (Definition \ref{d-CAD}); note that as the counterexample presented in Section \ref{s:CAD counterexample} shows, a straightforward generalization of the scalar definition does not work well for the matrix case, and a bit more involved definition is needed.   In Theorems \ref{t:CAD}, and \ref{t-P} through \ref{t-E} we find relations between this CAD and the Carath\'{e}odory condition, boundary reproducing kernels for the de Branges--Rovnyak space, and  as the (matrix-valued) point masses of $\bmu^\alpha$.

\section{Preliminaries}\label{s-prelim}
%%%%%%%%%%%%%%%%%%%%%%%%%%%%%%%%%%%%%%%%%%%%%%

\subsection{Matrix-valued Aleksandrov--Clark measures}\label{ss-matrixACmeas}
%%%%%%%%%%%%%%%%%%%%%%%%%%%%%%%%%%%%%%%%%%%%%%
Let $H^\infty (\D ) \otimes \C ^{n\times n} $ denote space of bounded $n\times n$ matrix-valued functions on the open complex unit disc $\D$. In this paper $\C^{n\times n}$ denote the set of all $n\times n$ complex matrices equipped with the operator norm (maximal singular value), i.e.~the set of all (bounded) linear operators on $\C^n$. We define the 
matrix-valued Schur class $\scr{S} (n)$, to be the set of all \emph{purely contractive}  functions in $H^\infty (\D ) \otimes \C ^{n\times n} $. Recall that a function  $b \in H^\infty (\D ) \otimes \C ^{n\times n} $ is purely contractive if and only if $\| b (z) \| <1$ for all $z \in \D$. Note that $\| b(z) \| <1$ if and only if $\| b(0) \| <1$ by the Schwarz lemma (and M\"{o}bius transformations).

Let $\cU(n)$ denote the group of unitary $n\times n$ matrices.  

Given $b \in \scr{S} (n)$ and $\alpha \in \mc{U} (n)$
define the 
function:
\begin{align}
\label{Herglotz fn}
H_{\alpha} (z) := (I_n + b(z) \alpha ^* ) (I_n - b(z) \alpha ^* ) ^{-1}.   
\end{align}
It is easy to see that $H_\alpha$ is a Herglotz function on $\D$, i.e.~an analytic function with non-negative real part on $\D$.
Using the parallelogram identity, it is not difficult to obtain from the classical scalar Herglotz representation formula \eqref{e-scalarHerglotz} its matrix-valued version\footnote{Matrix-valued and operator-valued Herglotz--Riesz representation formulas have been subject to much research as early as \cite{ShmulYan}.}. Namely, for each $\alpha\in\cU(n)$, there is a unique  finite, non-negative $\mathbb{C} ^{n\times n}$-valued Borel measure $\bmu_{\alpha}$ on the unit circle $\T=\partial \D$, so that
\begin{align}\label{e-Herglotz}
H_{\alpha} (z) = i \im{H_{\alpha} (0)} + \int _{\partial \D} \frac{\zeta+z }{\zeta-z} \bmu ^{\alpha} (\dd\zeta ) . 
\end{align}
To avoid misunderstandings, we mention that the imaginary and real part of a matrix $A$ is given by $\im{A}:=(A-A^*)/(2i)$ and $\re{A}:=(A+A^*)/2$ respectively.

The measures $\bmu^{\alpha}$ are called  Clark or Aleksandrov--Clark measures (for $b$). To our knowledge, this definition was first introduced in \cite{Polt-Kap_JFA_2006} for  operator-valued  \emph{inner} functions $b$,  and then in \cite{Elliott2010} for general contractive \emph{matrix-valued} functions.

Note that replacing in \eqref{Herglotz fn} the expression $b(z)\alpha^*$ by $\alpha^* b(z)$ we still get a Herglotz function, so one can wonder why we use this particular order in \eqref{Herglotz fn}. One of the reasons is the theory of the matrix-valued de Branges--Rovnyak spaces.

Recall, see \cite{Martin-uni}, that for $b\in\scr{S}(n)$ the de Branges--Rovnyak space  $\scr{H}(b)$ is the $\C ^n$-valued reproducing kernel Hilbert space (RKHS) with %$\C ^{n\times n}$
matrix-valued reproducing kernel:
\begin{align}\label{d-kernel}
k^b (z,w) := \frac{I_n - b(z) b(w)^*}{1-z\overline w}; \quad \quad z,w \in \D. 
\end{align}
Note that for any $\alpha \in \mc{U} (n)$, $k^{b} = k ^{b\alpha ^*}$ (so that $\scr{H} (b) = \scr{H} (b\alpha ^*)$), but generally $k^\alpha\ne k^{\alpha^*b}$, which motivates our choice of order. 

We also mention that the order $b(z)\alpha^*$ agrees well with the Clark model for finite rank perturbations developed in \cite{LTFinite}.

%%%%%%%%%%%%%%%%%%%%%%%%%%%%%%%%%%
\subsection{Trace and decomposition of a matrix-valued measure}\label{ss-tracedecomp}
We are interested in the subtle properties of the Aleksandrov--Clark family of measures. In order to formulate these precisely, we introduce some terminology.

For matrix-valued measure $\bmu$, define the trace $\mu := \tr \bmu = \sum_{k=1}^n (\bmu)_{k,k}$, where $(\bmu)_{k,l}$, $1\le k,l\le n$ is the $(k,l)$-entry of $\bmu$. Recall that the operator norm of a matrix $A$ is bounded by its trace. Indeed, we have $\|A\|\le \tr ((A^*A)^{1/2})$ and for positive definite matrices $A=(A^*A)^{1/2}.$
In particular, there exists a measurable matrix-valued function $W$ mapping the unit circle $\mathbb{T}$ to positive definite $n\times n$ matrices so that
\[
\dd\bmu (\lambda) = W(\lambda) \dd \mu(\lambda);
\qquad
\lambda\in \T.
\]
Of course, the entries of $W$ are defined a.e.~with respect to $\mu$. This definition of $W$ through the trace also ensures that its entries are in $L^\infty.$ In fact, we have $\tr W(\lambda)\le 1$ with respect to $\mu$-a.e.~$\lambda\in \T.$

Through the Lebesgue decomposition of the scalar measure $\dd\mu = w \dd m+ \dd\mu\ti{s}$ (here, we denote by $m$ the normalized Lebesgue measure on $\T$) we decompose the matrix-valued measure $\bmu$ correspondingly. Concretely, we have
\[
\dd\bmu = W w\dd m  + W \dd\mu\ti{s} =
\widetilde{W} \dd m+ W \dd\mu\ti{s} = \dd\bmu\ti{ac}+\dd\bmu\ti{s},
\]
where $\widetilde W = W w$, $w = \dd\mu / \dd m$ (we can also write $\widetilde W = \dd\bmu / \dd m$).

%%%%%%%%%%%%%%%%%%%%%%%%%%%%%%%%%%%%%%
\subsection{Some known 
	and some simple 
	results on Aleksandrov--Clark measures}
Let $z \nt  \la$ denote non-tangential convergence of $z \in \D$ to $\la \in \partial \D$. Recall that one says that $z\nt\la$ if $z$ approaches $\la$ from within a \emph{Stolz region}:
$$ \Ga _t (\la) := \{ z \in \D : \ |z-\la | < t \left( 1 - |z| \right) \}, \quad \quad t >1. $$ 

It is well-known that for every $t>1$ the non-tangential boundary values of $b$ exist with respect to Lebesgue a.e.~$\la\in \T$. For $\la\in \T$, we let $b(\la):=\lim_{z \nt  \la}b(z)$ wherever the non-tangential limit exists.

The description of the closed support of the scalar measure is an easy and well-known fact (cf., e.g.~\cite[Corollary 4.4]{Mitkovski_IUMJ}). 

The multiplicity of the absolutely continuous part of $\bmu^\alpha$ can be captured in terms of the non-tangential boundary values $b(\la)$ of the characteristic function. To do so, we now state and prove a version of \cite[Theorem 5.6]{LTFinite}.
Recall that $b$ is a contraction on $\mathbb{D}$. For $z\in \mathbb{D}$ define the defect function
\[
\Delta_\alpha(z) := (I_n - \alpha b(z)^*b(z)\alpha^*)^{1/2}.
\]
Through taking non-tangential boundary values of $b$, we can also define $\Delta_\alpha(\la)$ a.e.~on $\T$.
Consider the Lebesgue density $\wt W^\alpha$ of $\bmu^\alpha$. See Subsection \ref{ss-tracedecomp} for the definition.

\begin{thm}
Take $\alpha\in \mc{U}(n)$. The Lebesgue density $\widetilde W^\alpha$  of the Aleksandrov--Clark measure $\bmu^\alpha$  can be computed as
\[
\widetilde W^\alpha(\lambda) 
=
(I_n-\alpha b(\lambda)^*)^{-1}(\Delta_\alpha(\lambda))^2 (I_n-b(\lambda)\alpha^*)^{-1}, \qquad\text{for a.e.~}\lambda\in\T
\]
(note that $I_n-b(\lambda)\alpha^*$ is invertible for a.e.~$\lambda \in\T$).
In particular, its rank is
$$
\rank\,\widetilde W^\alpha(\lambda) = \rank\,\Delta_\alpha(\lambda) \qquad\text{for a.e.~}\lambda\in\T.
$$
\end{thm}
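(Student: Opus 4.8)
The plan is to recover $\widetilde W^\alpha$ from the non-tangential boundary values of $\re (H_\alpha)$ and then to compute $\re (H_\alpha)$ in closed form. Since $\bmu^\alpha$ is a self-adjoint non-negative matrix measure, taking real parts in the representation formula \eqref{e-Herglotz} annihilates the constant term $i\im (H_\alpha(0))$ and replaces the Herglotz kernel by its real part, the scalar Poisson kernel $P(z,\zeta)=\re\left(\tfrac{\zeta+z}{\zeta-z}\right)$; hence
\[
\re (H_\alpha(z)) = \int_{\partial\D} P(z,\zeta)\,\bmu^\alpha(\dd\zeta),
\]
the entrywise Poisson integral of $\bmu^\alpha$. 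Applying the classical scalar Fatou theorem to each of the $n^2$ entries (the off-diagonal ones being complex measures), the limit $\lim_{z\nt\lambda}\re (H_\alpha(z))$ exists and equals $\dd\bmu^\alpha/\dd m = \widetilde W^\alpha(\lambda)$ for a.e.~$\lambda\in\T$.

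The second and main step is the algebraic identity
\[
\re (H_\alpha(z)) = (I_n - \alpha b(z)^*)^{-1}\,(\Delta_\alpha(z))^2\,(I_n - b(z)\alpha^*)^{-1},
\qquad z\in\D.
\]
Writing $A := I_n - b(z)\alpha^*$, so that $A^* = I_n - \alpha b(z)^*$, $I_n + b(z)\alpha^* = 2I_n - A$, and $H_\alpha = (2I_n-A)A^{-1}= 2A^{-1} - I_n$, one gets at once that $\re (H_\alpha) = \tfrac12(H_\alpha + H_\alpha^*) = A^{-1} + (A^*)^{-1} - I_n = (A^*)^{-1}(A + A^* - A^*A)A^{-1}$, and a short expansion yields $A + A^* - A^*A = I_n - \alpha b^* b\alpha^* = (\Delta_\alpha)^2$. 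This holds for every $z\in\D$ because $\|b(z)\|<1$ makes $A$ invertible there.

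To pass to the boundary in the explicit formula, I would use that $b(z)\to b(\lambda)$ as $z\nt\lambda$ for a.e.~$\lambda$, together with the a.e.~invertibility of $I_n - b(\lambda)\alpha^*$. The latter holds because $\det(I_n - b(z)\alpha^*)$ is a bounded analytic function on $\D$ that is not identically zero (it is nonzero at every interior point), so its non-tangential boundary values vanish only on a set of Lebesgue measure zero. Continuity of matrix inversion at invertible matrices then lets me take $z\nt\lambda$ on the right-hand side of the identity, and comparison with the first step gives the claimed formula for $\widetilde W^\alpha(\lambda)$. The rank statement is then immediate: $(I_n - b(\lambda)\alpha^*)^{-1}$ and its adjoint $(I_n - \alpha b(\lambda)^*)^{-1}$ are invertible a.e., and the positive semidefinite matrix $(\Delta_\alpha)^2$ has the same range, hence the same rank, as $\Delta_\alpha$; since conjugation by invertible matrices preserves rank, $\rank \widetilde W^\alpha(\lambda) = \rank \Delta_\alpha(\lambda)$.

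The only genuinely non-elementary inputs are the matrix Fatou theorem, which reduces to the scalar statement entrywise, and the a.e.~invertibility of $I_n - b(\lambda)\alpha^*$; everything else is the short identity above. I expect the most error-prone point to be keeping the left/right order of the factors $b(z)\alpha^*$ versus $\alpha b(z)^*$ consistent throughout the computation, which is precisely the ordering convention emphasized after \eqref{d-kernel}.
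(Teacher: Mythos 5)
Your proposal is correct and follows essentially the same route as the paper's own proof: take Hermitian parts in \eqref{e-Herglotz}, identify the left side with the Poisson extension and apply Fatou's theorem entrywise, establish the algebraic identity $\re H_\alpha = (I_n-\alpha b^*)^{-1}(\Delta_\alpha)^2(I_n-b\alpha^*)^{-1}$, and pass to non-tangential limits factor-wise using the determinant/uniqueness argument for a.e.\ invertibility of $I_n - b(\lambda)\alpha^*$. The only difference is that you write out the algebra (via $A = I_n - b\alpha^*$, $H_\alpha = 2A^{-1}-I_n$, and $A + A^* - A^*A = (\Delta_\alpha)^2$) that the paper merely asserts, which is a welcome addition rather than a deviation.
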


\begin{proof}
	We begin by taking the real part of \eqref{e-Herglotz}. With the Poisson extension $\mathcal{P}(\bmu^\alpha)$ of matrix-valued measure $\bmu^\alpha$, we obtain and then evaluate
	\begin{align*}
	\mathcal{P}(\bmu^\alpha)
	&=
	\re{[(I_n + b(z) \alpha ^* ) (I_n - b(z) \alpha ^* ) ^{-1}]}\\
	&=
	(I_n - \alpha b(z)^*)^{-1}
	(\Delta_\alpha(z))^2
	(I_n - b(z)\alpha^*)^{-1}.
	\end{align*}
	Since $b$ is a strict contraction on $\mathbb{D},$ the inverses exists there.
	
	To obtain the desired result we take $z\nt\la$. To see what happens on the left hand side we recall that, by Fatou's lemma, the non-tangential boundary values of the Poisson extension of a complex measure equal (Lebesgue almost everywhere) to the absolutely continuous part of the measure.  
	On the right hand side, we argue factor-wise. As was discussed before the theorem, the non-tangential boundary values of $\Delta_\alpha(z)$ exist. And since $b$ is a contraction on $\D$, $\det (I_n - b(z)\alpha ^*)$ is a non-trivial analytic function on $\D$. By the uniqueness theorem, it has non-trivial boundary values Lebesgue a.e.~on $\T$ and so $I_n - b(z)\alpha ^*$ is invertible a.e.~on $\T$. To see that $I_n - \alpha b(z)^*$ is invertible a.e.~on $\T$, simply work with the complex conjugate of the anti-analytic function.
\end{proof}

The following standard result (see e.g.~\cite[Theorem 6.1]{Gesztesy2000} for a real-line analog) will enable a recovery (see Corollary \ref{c-TraceSing}) of some information regarding the location of the singular part. We say that a Borel set $X$ is a carrier of a measure, if the measure of $\mathbb{T}\setminus X$ vanishes.

Define the matrix-valued (or scalar-valued) Cauchy transform of a  matrix-valued (or scalar-valued, respectively) measure
\begin{align}\label{d-Cauchy}
\cC\bnu(z)
:=
\int _{\partial \D} \frac{1}{1-\overline\zeta z} \bnu (d\zeta)
\qquad 
\text{for }z\in \mathbb{D}.
\end{align}

\begin{prop}\label{p-SP}
	Let  $\bnu$ be a $\mathbb{C}^{n\times n}$ regular finite positive Borel measure on $\mathbb{T}$.  Consider the sets
	\begin{align*}
	S:=\left\{\lambda\in \mathbb{T}: \lim_{z \nt\lambda}\tr\, \re\, \cC\bnu(z)=  \infty\right\},
	\,\,
	P:=\left\{\lambda\in \mathbb{T}: \lim_{z \nt\lambda}\tr\, (z-\lambda) \cC\bnu(z)\neq 0\right\}.
	\end{align*}
	where $\cC$ is the Cauchy transform given by \eqref{d-Cauchy}.
	
	Then set $S$ is a carrier of the singular part $\bnu\ti{s}$ of $\bnu$ and the set $P$ is the carrier ope the purely atomic part $\bnu\ti{a}$ of $\bnu$. Moreover, $S$ has zero Lebesgue measure, and $P$ is a minimal carrier of $\bnu\ti{a}$, meaning that no proper subset $Y\subsetneq P$ is a carrier of $\bnu\ti{a}$. 
\end{prop}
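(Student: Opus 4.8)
The key observation is that the sets $S$ and $P$ are defined through the scalar quantity $\tr\,\re\,\cC\bnu(z)$ and $\tr\,(z-\la)\cC\bnu(z)$, and trace commutes with the Cauchy transform, so $\tr\,\cC\bnu = \cC\mu$ where $\mu = \tr\bnu$ is the scalar trace measure introduced in Subsection \ref{ss-tracedecomp}. Thus $S$ and $P$ are exactly the sets obtained by applying the scalar result (the cited real-line analog) to the scalar measure $\mu$, and the carrier statements will follow once we relate the Lebesgue decomposition of $\bnu$ to that of $\mu$.

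\textbf{First I would record the decomposition dictionary.} Since $\dd\bnu = W\,\dd\mu$ with $W$ positive definite and $\tr W = 1$ ($\mu$-a.e.), the singular part $\bnu\ti{s} = W\,\dd\mu\ti{s}$ has trace measure $\mu\ti{s}$, and the atomic part $\bnu\ti{a}$ has trace measure $\mu\ti{a}$; in particular a Borel set carries $\bnu\ti{s}$ (respectively $\bnu\ti{a}$) if and only if it carries the scalar $\mu\ti{s}$ (respectively $\mu\ti{a}$), because $W$ is strictly positive definite $\mu$-a.e.\ so a matrix entry and its trace vanish on the same sets. This equivalence transfers all four assertions---$S$ is a carrier of $\bnu\ti{s}$, $P$ is a carrier of $\bnu\ti{a}$, $S$ has zero Lebesgue measure, and $P$ is a minimal carrier---from the scalar measure $\mu$ to the matrix measure $\bnu$.

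\textbf{Then I would invoke the scalar result.} Applying the cited standard theorem to $\mu$ gives that $S = \{\la : \lim_{z\nt\la}\re\,\cC\mu(z) = \infty\}$ carries $\mu\ti{s}$ and has zero Lebesgue measure, while $P = \{\la : \lim_{z\nt\la}(z-\la)\cC\mu(z) \neq 0\}$ is the minimal carrier of $\mu\ti{a}$ (the nonvanishing of this limit detects exactly the point masses, with the limit equal to $-\la\cdot\mu(\{\la\})$ up to normalization). Combining with the decomposition dictionary finishes the proof. \textbf{The main obstacle} is purely bookkeeping: verifying that passing to the trace genuinely produces the \emph{scalar} sets in the statement---i.e.\ confirming the identity $\tr\,\cC\bnu = \cC(\tr\bnu)$ and that real part commutes with trace---and being careful that minimality of $P$ is inherited, since minimality is the one property not automatically preserved under coarse comparisons of carriers and requires the strict positive-definiteness of $W$ to ensure no mass is lost.
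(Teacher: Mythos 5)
Your proposal is correct and is essentially the paper's own argument: the paper disposes of this proposition in the one-line remark immediately following it, reducing to the scalar result via the trace measure $\mu=\tr\bnu$, using exactly the commutation of trace with the Cauchy transform and real part that you verify. The only quibble is your justification of the carrier dictionary --- $W$ need not be \emph{strictly} positive definite $\mu$-a.e.; what you actually need is that a positive semi-definite matrix vanishes if and only if its trace does, which together with $\tr W(\lambda) = 1$ for $\mu$-a.e.\ $\lambda\in\T$ gives the equivalence of carriers (and of atoms, hence minimality) without any strict positivity assumption.
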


On the side, we mention that Proposition \ref{p-SP} is an immediate corollary to the analogous result for scalar measures. Indeed, the carrier of a matrix-valued measure is that of its trace, and taking the Cauchy transform, taking the trace and taking the real part all commute.

Moving on, we easily obtain the corresponding result for $\bmu_\alpha.$ 

\begin{cor}\label{c-TraceSing}
Consider the sets
	\begin{align*}
	S_\alpha&=\left\{\lambda\in \mathbb{T}: \lim_{z \nt\lambda}\tr\,\re\, (I_n - b(z)\alpha^*)^{-1}=  \infty\right\},\\
	P_\alpha&=\left\{\lambda\in \mathbb{T}: \lim_{z \nt\lambda}\tr\, (z-\lambda) (I_n - b(z)\alpha^*)^{-1} \neq 0\right\}.
	\end{align*}
	Then the set $S_\alpha$ is a carrier of the singular part $\bmu^\alpha\ti{s}$ of $\bmu^\alpha$ and $P$ is a carrier of the purely atomic part $\bmu^\alpha\ti a$ of $\bmu^\alpha$. Moreover,   $S$ has Lebesgue measure zero, and $P$ is a minimal carrier of $\bmu^\alpha\ti a$, meaning that no subset $Y\subsetneq P_\alpha$ is a carrier  of $\bmu^\alpha\ti a$.
\end{cor}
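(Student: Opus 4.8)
The plan is to reduce the statement to Proposition \ref{p-SP} applied to the measure $\bnu=\bmu^\alpha$, by showing that the resolvent $(I_n-b(z)\alpha^*)^{-1}$ appearing in the definitions of $S_\alpha$ and $P_\alpha$ differs from the Cauchy transform $\cC\bmu^\alpha(z)$ only by an additive constant matrix. First I would rewrite the Herglotz function \eqref{Herglotz fn} in resolvent form: setting $B:=b(z)\alpha^*$ and using the algebraic identity $(I_n+B)(I_n-B)^{-1}=2(I_n-B)^{-1}-I_n$, one obtains $H_\alpha(z)=2(I_n-b(z)\alpha^*)^{-1}-I_n$.

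Next I would convert the Herglotz representation \eqref{e-Herglotz} into a statement about the Cauchy transform \eqref{d-Cauchy}. Since $\overline\zeta=\zeta^{-1}$ on $\T$, the kernel satisfies $\frac{\zeta+z}{\zeta-z}=\frac{2}{1-\overline\zeta z}-1$, so that $\int_{\partial\D}\frac{\zeta+z}{\zeta-z}\,\bmu^\alpha(\dd\zeta)=2\,\cC\bmu^\alpha(z)-\bmu^\alpha(\T)$. Substituting this together with the resolvent formula for $H_\alpha$ into \eqref{e-Herglotz} yields $(I_n-b(z)\alpha^*)^{-1}=\cC\bmu^\alpha(z)+C$, where $C:=\tfrac12 I_n+\tfrac{i}{2}\im H_\alpha(0)-\tfrac12\bmu^\alpha(\T)$ is a fixed matrix independent of $z$.

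Finally I would observe that this additive constant is invisible to both limit conditions. Because $\tr$ and $\re$ are real-linear, $\tr\,\re\,(I_n-b(z)\alpha^*)^{-1}=\tr\,\re\,\cC\bmu^\alpha(z)+\tr\,\re\,C$, and adding the finite constant $\tr\,\re\,C$ cannot change whether the non-tangential limit equals $+\infty$; hence $S_\alpha=S$. Similarly $\tr\,(z-\lambda)(I_n-b(z)\alpha^*)^{-1}=\tr\,(z-\lambda)\cC\bmu^\alpha(z)+(z-\lambda)\tr\,C$, and the last term vanishes as $z\nt\lambda$, so the two non-tangential limits coincide (in particular one exists and is nonzero exactly when the other is); hence $P_\alpha=P$. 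With $S_\alpha=S$ and $P_\alpha=P$ in hand, all four assertions---that $S_\alpha$ carries $\bmu^\alpha\ti{s}$, that $P_\alpha$ carries $\bmu^\alpha\ti{a}$ and is a minimal such carrier, and that $S_\alpha$ has zero Lebesgue measure---follow immediately from Proposition \ref{p-SP}.

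I do not expect a genuine obstacle here; this is a clean reduction. The only point requiring a little care is verifying that passing the additive constant (for $S$) and the vanishing term $(z-\lambda)\tr\,C$ (for $P$) through the definitions preserves both the \emph{existence} and the \emph{value} of the relevant non-tangential limits, which is exactly what makes the set equalities $S_\alpha=S$ and $P_\alpha=P$ legitimate.
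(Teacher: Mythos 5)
Your proposal is correct and follows essentially the same route as the paper: the paper likewise uses the kernel identity $\frac{1+z\bar\zeta}{1-z\bar\zeta}=2(1-z\bar\zeta)^{-1}-1$ to rewrite the Herglotz representation \eqref{e-Herglotz} as $(I_n-b(z)\alpha^*)^{-1}=\tfrac12\left(I_n-H_\alpha(0)^*\right)+\cC\bmu^\alpha(z)$ (your constant $C$ simplifies to exactly this, since $\bmu^\alpha(\T)=\re H_\alpha(0)$), and then invokes Proposition \ref{p-SP}. Your extra care in checking that the additive constant does not disturb the two limit conditions is precisely the step the paper leaves implicit.
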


\begin{proof}
	We use the identity
	$ \frac{1+z\bar\zeta}{1-z\bar\zeta} = 2(1-z\bar\zeta) ^{-1} -1 $
	on the left and right hand side of \eqref{e-Herglotz} to re-arrange the Herglotz formula to read
	\begin{align}\label{e-Cauchy}
	(I_n - b(z) \alpha ^* ) ^{-1} = \frac{I_n - H_\alpha (0) ^*}{2} + \int _{\partial \D} \frac{1}{1-z\bar\zeta} \bmu^{\alpha} (d\zeta ). 
	\end{align}
	Now Proposition \ref{p-SP} (applied to measure $\bmu^\alpha$) immediately yields the result.
\end{proof}

%%%%%%%%%%%%%%%%%%%%%%%%%%%%%%%%%%%%%%%%%%%%%%%%%%%%%%%%%%%%%%
	\subsection{Poltoratski's Theorem}
	The following theorem by Poltoratski, see \cite[Theorem 2.7]{NONTAN}, often plays a key role in investigations of the singular parts of Aleksandrov--Clark measures.
	
	\begin{thm}\label{t-Polt}
		For a (scalar) finite Borel measure $\tau$ on $\T$ and $f\in L^2(\tau)$, the normalized Cauchy transform $\frac{\cC f\tau(z)}{\cC\tau(z)}$ possesses the following non-tangential boundary values 
		$\tau\ti{s}$-a.e.:
		\begin{align*}
		\lim_{z \nt\lambda}
		\frac{\cC f\tau(z)}{\cC\tau(z)}
		= f(\lambda)
		\quad
		\text{for } 
		\tau\ti{s}\text{-a.e.~}\lambda\in\T.
		\end{align*}
	\end{thm}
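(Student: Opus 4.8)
The plan is to reduce the statement to a comparison principle for two mutually singular measures and then to isolate the genuinely analytic core. By linearity, and by treating the real and imaginary, then the positive and negative, parts of $f$ separately — and, for the passage from a dense subclass to a general $f\in L^2(\tau)$, by invoking the maximal estimate discussed below — it suffices to treat an indicator $f=\1_E$ of a $\tau$-measurable set $E$. Writing $\tau=\tau|_E+\tau|_{E^c}$, so that $\cC(\1_E\tau)=\cC(\tau|_E)$ and $\cC\tau=\cC(\tau|_E)+\cC(\tau|_{E^c})$, the asserted limit at a $\tau\ti{s}$-a.e.\ point $\lambda\in E$ (where $f(\lambda)=1$) is equivalent to
\[
\lim_{z\nt\lambda}\frac{\cC(\tau|_{E^c})(z)}{\cC\tau(z)}=0,
\]
and symmetrically on $E^c$. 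Since the pieces $\tau\ti{s}|_E$ and $\tau\ti{s}|_{E^c}$ are mutually singular, the whole theorem follows from the key lemma: \emph{if $\mu\perp\nu$ are finite positive Borel measures on $\T$, then $\cC\mu/\cC\nu\to0$ non-tangentially $\nu\ti{s}$-a.e.}

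For the lemma, the denominator is the easy half. Taking real parts exactly as in the proof of Corollary \ref{c-TraceSing}, one has $\re\,\cC\nu(z)=\tfrac12\nu(\T)+\tfrac12 P\nu(z)$, where $P\nu$ is the Poisson integral. At $\nu\ti{s}$-a.e.\ $\lambda$ the symmetric derivative of $\nu$ is $+\infty$, so $P\nu(z)\to+\infty$ non-tangentially and hence $|\cC\nu(z)|\ge\re\,\cC\nu(z)\to+\infty$; applied with $\nu=\tau|_E$ this also shows $\cC\tau=\cC\nu+\cC\mu$ is dominated by $\cC\nu$, so the displayed limit reduces to $\cC\mu/\cC\nu\to0$. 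What remains is to show the numerator $\cC\mu(z)$ grows strictly slower at such $\lambda$. Heuristically this should hold because $\mu\perp\nu$ forces the local mass of $\mu$ near a $\nu\ti{s}$-typical point to be negligible, relative to that of $\nu$, at every scale, so $\mu$ "sees" $\lambda$ as a point of low concentration.

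Turning this heuristic into a proof is the crux, and it is precisely where the singular-integral nature of the Cauchy transform bites: the kernel $|1-\overline\zeta z|^{-1}$ decays only like (distance)$^{-1}$, far more slowly than the Poisson kernel $(1-|z|^2)|1-\overline\zeta z|^{-2}$, so the mass of $\mu$ far from $\lambda$ cannot simply be discarded and $\cC\mu$ is not controlled by the local behavior alone. I would therefore proceed in two stages: (i) prove the lemma directly for a convenient dense class — e.g.\ when $\mu$ is supported away from $\lambda$, or $f$ continuous — where the blow-up of $\cC\nu$ beats a bounded numerator; and (ii) establish a weak-type maximal estimate, with respect to $\tau\ti{s}$, for the maximal normalized Cauchy transform $\lambda\mapsto\sup_{z\nt\lambda}\big|\cC(f\tau)(z)/\cC\tau(z)\big|$. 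Given such a bound, the standard Banach-principle argument upgrades $\tau\ti{s}$-a.e.\ convergence from the dense class to all $f\in L^2(\tau)$, completing the reduction of the first paragraph. I expect step (ii) — the weak-type maximal inequality for the Cauchy transform against the singular measure — to be the main obstacle; it is the harmonic-analytic heart of Poltoratski's original argument, and the reason the result lies much deeper than its Poisson-kernel analogue.
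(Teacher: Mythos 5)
First, be aware of what you are measuring yourself against: the paper does not prove this statement at all --- Theorem \ref{t-Polt} is imported verbatim from Poltoratski \cite[Theorem 2.7]{NONTAN} as a known, deep external result. So the question is whether your text is a self-contained proof, and it is not: its crux is announced rather than established. Your reduction to indicators and to the ``key lemma'' ($\mu\perp\nu$ implies $\cC\mu/\cC\nu\to 0$ non-tangentially $\nu\ti{s}$-a.e.) is sound bookkeeping, but it removes no difficulty whatsoever --- taking $\tau=\mu+\nu$ and $f$ the indicator of a carrier of $\mu$, the key lemma \emph{is} the indicator case of the theorem, restated. The denominator half is correct but standard: $\re\,\cC\nu(z)=\tfrac12\nu(\T)+\tfrac12 P\nu(z)\to\infty$ non-tangentially at $\nu\ti{s}$-a.e.\ point, where $P\nu$ is the Poisson integral. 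Everything then hinges on your step (ii), the weak-type maximal inequality for $\lambda\mapsto\sup_{z\nt\lambda}\bigl|\cC(f\tau)(z)/\cC\tau(z)\bigr|$ relative to $\tau\ti{s}$, which you explicitly defer and yourself call ``the main obstacle'' and the heart of Poltoratski's argument. That inequality is precisely the content of the theorem; a proposal that postpones it contains a genuine gap, not a routine one.

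Second, even your step (i) is weaker than you suggest. For continuous $f$ with $f(\lambda)=0$ the natural estimate bounds the numerator by the absolute Cauchy integral $\int_\T|1-\overline\zeta z|^{-1}|f(\zeta)|\,\dd\tau(\zeta)$, and this is \emph{not} dominated by $|\cC\tau(z)|$: your lower bound on the denominator goes through the Poisson integral, while the Cauchy kernel exceeds the Poisson kernel by the factor $|1-\overline\zeta z|/(1-|z|^2)$, which is large off the central arc --- exactly the slow decay you flag yourself. (Already for $\tau=m$ the absolute Cauchy integral grows like $\log\frac{1}{1-|z|}$ while the Poisson integral stays bounded; an analogous multiscale loss persists for singular $\tau$.) So cancellation is needed already in the dense-class step, not only in the maximal estimate. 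The only case you genuinely handle is $\mu$ supported at positive distance from $\lambda$, where the numerator is bounded; but that case never arises from your reduction, since for $f=\1\ci{E}$ the relevant measure is $\mu=\tau|\ci{E^c}$, whose support in general clusters at $\tau\ti{s}$-a.e.\ $\lambda\in E$. A complete proof must engage the actual mechanism --- in \cite{NONTAN} this is Clark theory, realizing the normalized Cauchy transform as a unitary from $L^2(\sigma)$ onto a model (de Branges--Rovnyak type) space and controlling boundary behavior through that space --- or otherwise actually establish the weak-type bound. As it stands, your text is a correct description of the shape of a proof, not a proof.
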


%%%%%%%%%%%%%%%%%%%%%%%%%%%%%%%%%%%%%%%%%%%%%%%%%%%%%%%%%%%%%%%%%%%%%%%%%%%%%%%%%%%%%%%%%%%%%%%%%%%%
\section{Nevanlinna Theorem concerning point masses}\label{s-Nevanlinna}
We refine the second statement of Corollary \ref{c-TraceSing} in the following simple matrix-valued analog of a result by Nevanlinna.

\begin{thm}\label{t-Nevanlinna}
	Fix $b \in \scr{S} (n)$ and $\alpha \in \mc{U} (n)$. Then for any $\la \in \partial \D$, 
	$$ \bmu^\alpha \{\la\} = \lim _{z \nt \la} (1-z\bar\la) (I_n - b(z) \alpha ^* ) ^{-1}
	$$
	(the limit exists for all $\lambda\in\T$).
\end{thm}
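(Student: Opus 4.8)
The plan is to reduce everything to the Cauchy transform identity \eqref{e-Cauchy} and then evaluate the resulting limit by dominated convergence, entry by entry. Rewriting \eqref{e-Cauchy} in the notation of \eqref{d-Cauchy}, I have
$$(I_n - b(z)\alpha^*)^{-1} = \frac{I_n - H_\alpha(0)^*}{2} + \cC\bmu^\alpha(z), \qquad z \in \D.$$
Multiplying by $(1 - z\ov\la)$ and letting $z \nt \la$, the contribution of the constant matrix $\tfrac12(I_n - H_\alpha(0)^*)$ vanishes because $1 - z\ov\la \to 0$. Thus it suffices to compute
$$\lim_{z \nt \la}(1 - z\ov\la)\,\cC\bmu^\alpha(z) = \lim_{z \nt \la}\int_{\partial\D}\frac{1 - z\ov\la}{1 - z\ov\zeta}\,\bmu^\alpha(\dd\zeta).$$

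For the pointwise behaviour of the integrand, I note that at $\zeta = \la$ the ratio $(1 - z\ov\la)/(1 - z\ov\la)$ equals $1$ identically, while for $\zeta \ne \la$ it tends to $(1 - |\la|^2)/(1 - \la\ov\zeta) = 0$ as $z \to \la$, since $|\la| = 1$. Hence the integrand converges pointwise to the indicator $\1_{\{\zeta = \la\}}$, and the candidate limit is $\int_{\partial\D}\1_{\{\zeta = \la\}}\,\bmu^\alpha(\dd\zeta) = \bmu^\alpha\{\la\}$.

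The one genuine point is to justify passing the limit inside the integral, and this is where the non-tangential approach enters. Writing $1 - z\ov\la = \ov\la(\la - z)$, so that $|1 - z\ov\la| = |\la - z|$, and using the trivial lower bound $|1 - z\ov\zeta| \ge 1 - |z|$ (valid for all $\zeta \in \T$ since $|\zeta|=1$), I obtain for $z \in \Ga_t(\la)$
$$\left|\frac{1 - z\ov\la}{1 - z\ov\zeta}\right| = \frac{|\la - z|}{|1 - z\ov\zeta|} \le \frac{t(1 - |z|)}{1 - |z|} = t,$$
a bound independent of both $z \in \Ga_t(\la)$ and $\zeta \in \T$. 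Since $\bmu^\alpha$ is a finite matrix-valued measure, each scalar entry is a finite complex measure and the dominated convergence theorem applies entrywise; this simultaneously yields the claimed identity and the existence of the non-tangential limit for every $\la \in \T$. The main (and essentially only) obstacle is thus the uniform Stolz-region estimate above — everything else is a direct consequence of \eqref{e-Cauchy} and dominated convergence.
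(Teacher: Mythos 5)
Your proof is correct and takes essentially the same route as the paper's: both start from \eqref{e-Cauchy}, multiply by $1-z\bar\lambda$, and justify the passage to the limit via the uniform Stolz-region bound $\left|\frac{1-z\bar\lambda}{1-z\bar\zeta}\right|\le t$ combined with dominated convergence applied to the finite matrix-valued measure. The only cosmetic difference is that the paper first splits off the atom at $\lambda$ and shows the integral over $\partial\D\setminus\{\lambda\}$ tends to zero, whereas you keep the whole integral and identify the pointwise limit of the integrand as $\1_{\{\lambda\}}$ --- the two formulations are trivially equivalent.
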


Throughout this paper, we use $\bmu\{\lambda\}$ to denote $\bmu(\{\lambda\})$.

\begin{remark*}
In the scalar situation ($n=1$) the classical Nevanlinna theorem is usually stated as follows: For $\lambda\in\T$ one has  $\mu^\alpha\{\lambda\} \ne 0$ if and only if 
\begin{align}
\label{e: Nevanlinna 01}
\lim_{z\nt\lambda} b(z)= \alpha, \qquad \text{and }\quad b'(\lambda):=\lim_{z\nt\lambda} b'(z)   \quad\text{exists and is finite.}
\end{align}
The limit $b'(\lambda)$ is called the Carath\'{e}odory angular derivative. Note that the conditions \eqref{e: Nevanlinna 01} are equivalent to the existence of the limit
\[
\lim_{z\nt \lambda} \frac{b(z)-\alpha}{z-\lambda} ,
\]
and that this limit coincides with $b'(\lambda)$. As for the $\mu^\alpha\{\lambda\}$, the statement found in the literature usually states that $\mu^\alpha\{\lambda\} = 1/|b'(\lambda)|$; however one can see from the proof (and it was stated in the original Nevanlinna paper \cite{Nevanlinna}) that $\mu^\alpha\{\lambda\} = \alpha \overline \lambda/ b'(\lambda) $. 

As one can easily see, in the scalar case our result gives exactly the same value. However, in the matrix case the relations with the Carath\'{e}odory angular derivative is more complicated (one needs to take into account the directionality of derivatives). The complete theory of the Carath\'{e}dory angular derivatives in the matrix case will be presented below in Sections \ref{s-CAD}, \ref{s-CAD2}, \ref{s-CaraPoint}. 

	The authors thank H.~Woerdeman for asking the question that prompted this remark.
\end{remark*}

\begin{proof}[Proof of Theorem \ref{t-Nevanlinna}]
	Multiplying both sides of equation \eqref{e-Cauchy} by $1-z\bar\lambda$ and taking non-tangential limits, it follows that 
	\begin{align*}
	\lim _{z \nt \la} (1-z\bar\la) (I_n - b(z) \alpha ^* ) ^{-1}  &=  0 + \lim _{z \nt \la} \int _{\partial \D} \frac{1-z\bar\la}{1-z\bar\zeta} \bmu^{\alpha} (d\zeta)  \\
	& =  \bmu^{\alpha} \{\la\} + \lim _{z \nt \la} \int _{\partial \D \sm \{ \la \} } \frac{1-z\bar\la}{1-z\bar\zeta} \bmu^{\alpha} (d\zeta). 
	\end{align*}
	Fix a $t^2 >0$, and a Stolz domain $\Ga _t (\la )$, so that $z \rightarrow \la$ from within $\Ga _t (\la)$. For each such $z$, consider the integrand:
	$$ f_z (\zeta ) := \frac{1-z\bar\la}{1-z\bar\zeta}; \quad \quad \zeta \in \partial \D \sm \{ \la \}. $$ This is a uniformly bounded (in modulus) net (indexed by $z$) of functions on $\partial \D \sm \{ \la \}$ since:
	\[ |f _z (\zeta ) |  =  \left| \frac{1-z\bar\la}{1-z\bar\zeta}  \right| = \left| \frac{z-\la}{z-\zeta} \right| \nn 
	<  t^2 \frac{(1-|z|)}{|z- \zeta|}
	\leq t^2 \frac{(1-|z|)}{|\zeta| - |z|} \nn 
	=  t^2 < \infty. \nn \]
	Moreover, for any fixed $\zeta \in \partial \D \sm \{ \la \}$, 
	$$ \lim _{z \nt \la} f_z (\zeta ) = 0, $$ so that the (moduli of the) $f_z (\zeta)$ are dominated by the constant function $t^2$, and converge to $0$ pointwise on $\partial \D \sm \{\la \}$. By the Lebesgue dominated convergence theorem, 
	$$ \lim _{z \nt \la} \int _{\partial \D \sm \{ \la \} } \frac{1-z\bar\la}{1-z\bar\zeta} \bmu^{\alpha} (d\zeta) =0, $$ and the claim follows.
\end{proof}

%%%%%%%%%%%%%%%%%%%%%%%%%%%%%%%%%%%%%%%%%%%%%%%%%%%%%%%%%%%%%%%%%%%%%%%%%%%%%%%%%%
\section{Directional carrier and vector mutual singularity of singular parts}\label{s-support}
%%%%%%%%%%%%%%%%%%%%%%%%%%%%%%%%%%%%%%%%%%%%%%%%%%%%%%%%%%%%%%%%%%%%%%%%%%%%
We refine Corollary \ref{c-TraceSing} to include a directional carrier of the matrix-valued singular part.

\begin{prop}\label{p-sing}
	For every $\be\in \ran{W^\alpha(\la)}$, the non-tangential limit $\lim_{z\nt\lambda}b^*(z)\be $ exists $\mu^\alpha\ti{s}$-a.e. and is equal to $\alpha^*\be $.
\end{prop}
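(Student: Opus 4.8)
The plan is to reduce the statement to the nontangential asymptotics of the resolvent-type matrix function $R(z) := (I_n - b(z)\alpha^*)^{-1}$, whose adjoint is $R(z)^* = (I_n - \alpha b(z)^*)^{-1}$, and then to read off the limit of $b^*(z)\be$ by a multiplication manoeuvre rather than by inverting a matrix that degenerates in the limit. Writing $M := W^\alpha(\la)$ and $\mu := \mu^\alpha = \tr\bmu^\alpha$, I would first record, from the rearranged Herglotz formula \eqref{e-Cauchy}, that $R(z) = \tfrac12\bigl(I_n - H_\alpha(0)^*\bigr) + \cC\bmu^\alpha(z)$, so $R(z)$ differs from the matrix Cauchy transform $\cC\bmu^\alpha(z)$ only by a constant matrix.

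The first main step is a Poltoratski argument. For fixed vectors $\be,\bh\in\C^n$ the scalar density $\zeta\mapsto \langle W^\alpha(\zeta)\be,\bh\rangle$ lies in $L^\infty(\mu)\subset L^2(\mu)$ (recall $\tr W^\alpha\le 1$ and $\mu$ is finite), and $\cC[\langle W^\alpha\be,\bh\rangle\,\mu](z) = \langle\cC\bmu^\alpha(z)\be,\bh\rangle$. Applying Theorem \ref{t-Polt} with $\tau=\mu$ to each such density and letting $\be,\bh$ range over a basis, I obtain $\lim_{z\nt\la}\cC\bmu^\alpha(z)/\cC\mu(z) = M$ for $\mu\ti{s}$-a.e.\ $\la$. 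Since $|\cC\mu(z)|\to\infty$ nontangentially at $\mu\ti{s}$-a.e.\ $\la$ (the carrier of $\mu\ti{s}$ is the set $S$ from Proposition~\ref{p-SP} and Corollary~\ref{c-TraceSing}, on which $\re\cC\mu\to\infty$), the constant term in $R(z)$ becomes negligible after division, and taking adjoints yields $R(z)^*/\overline{\cC\mu(z)}\to M$, i.e.
\[
(I_n-\alpha b(z)^*)^{-1}\be = \overline{\cC\mu(z)}\,\bigl(M\be + r(z)\bigr),\qquad r(z)\to 0,
\]
$\mu\ti{s}$-a.e., for every fixed $\be\in\C^n$.

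The second, decisive step avoids inverting $R(z)^*$ (which degenerates on $\ker M$). I multiply the last display on the left by $(I_n-\alpha b(z)^*)$ and divide by the scalar $\overline{\cC\mu(z)}$, which is nonzero for $z$ close to $\la$ within the Stolz region:
\[
\frac{1}{\overline{\cC\mu(z)}}\,\be = (I_n-\alpha b(z)^*)\,M\be + (I_n-\alpha b(z)^*)\,r(z).
\]
As $z\nt\la$ the left side tends to $0$, and $\|(I_n-\alpha b(z)^*)r(z)\|\le 2\|r(z)\|\to 0$ because $\|\alpha b(z)^*\|=\|b(z)\|\le 1$; hence $(I_n-\alpha b(z)^*)M\be\to 0$, that is $b^*(z)(M\be)\to\alpha^*(M\be)$. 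Since every vector in $\ran{W^\alpha(\la)}=\ran M$ is of the form $M\be$, this gives precisely the asserted existence of the limit and its value $\alpha^*\be$ for $\be\in\ran{W^\alpha(\la)}$.

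I expect the main obstacle to be conceptual rather than computational. The tempting direct route — proving $(I_n-\alpha b(z)^*)\be\to0$ by inverting $R(z)^*$ — stalls, because the Poltoratski limit controls $R(z)^*$ only up to an additive error that, after inversion, is uncontrolled on $\ker M$; one can then conclude convergence only for the component along $\ran M$. The resolution is exactly the multiplication-by-$(I_n-\alpha b(z)^*)$ step above, which simultaneously cancels the inverse and projects the assertion onto $\ran M$; this is why the hypothesis $\be\in\ran{W^\alpha(\la)}$ is the correct one and is where the directionality of the matrix-valued theory enters. The only other point needing care is the standard fact that $|\cC\mu(z)|\to\infty$ nontangentially $\mu\ti{s}$-a.e., which I would quote from the scalar theory underlying Proposition~\ref{p-SP}.
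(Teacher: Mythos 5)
Your proof is correct and is essentially the paper's own argument: both rest on the rearranged Cauchy identity \eqref{e-Cauchy}, Poltoratski's theorem applied entrywise to identify $\lim_{z\nt\la}\bigl(\cC\bmu^\alpha(z)/\cC\mu^\alpha(z)\bigr)^* = W^\alpha(\la)$ at $\mu^\alpha\ti{s}$-a.e.\ point, the divergence $\cC\mu^\alpha(z)\to\infty$ on the singular carrier, and then left multiplication by the uniformly bounded factor $(I_n-\alpha b(z)^*)$ to kill the error term and conclude $(I_n-\alpha b(z)^*)W^\alpha(\la)\be\to 0$. The only difference is organizational (you apply the resolvent to a fixed vector and divide by $\overline{\cC\mu^\alpha(z)}$, while the paper multiplies the matrix identity through), which changes nothing of substance.
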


Ramifications of this proposition are the vector mutual singularity of the matrix-valued measures (Corollary \ref{c-orthog} below), as well as the strong  mutual singularity result  (Theorem \ref{t:countable_A-D}).

\begin{proof}[Proof of Proposition \ref{p-sing}]
We take the adjoint of \eqref{e-Cauchy} and then multiply from the left by $(I_n-\alpha b(z)^*)(\overline{\cC\mu^\alpha(z)})^{-1}$, where the Cauchy transform $\cC\mu^\alpha(z)$ was defined in \eqref{d-Cauchy}.
We arrive at
\[
(I_n-\alpha b(z)^*)\left(\frac{\cC\bmu^\alpha(z)}{\cC\mu^\alpha(z)} \right)^*
=
(\overline{\cC\mu^\alpha(z)})^{-1}
\left[
I_n-I_n/2+H_\alpha(0)/2\right].
\]

Now we take non-tangential limits as $z\nt\lambda$. Recalling that the non-tangential boundary limits of $\cC\mu^\alpha(z) = \infty$ with respect to $\mu^\alpha\ti{s}$-a.e., we obtain
\begin{align}\label{e-tozero}
\lim _{z \nt \la} (I_n -  \alpha b(z)^* ) \left(\frac{\cC\bmu^\alpha(z)}{\cC\mu^\alpha(z)} \right)^*
={\mathbf 0}
\qquad\text{for }\mu^\alpha\ti{s}\text{-a.e.~}\lambda\in \partial \D.
\end{align}
By Poltoratski's Theorem, see Theorem  \ref{t-Polt}, applied entrywise to $\cC\bmu^\alpha/\cC\mu^\alpha$
we have
\begin{align}
\label{Cmu/C}
\lim_{z\nt\lambda}\left(\frac{\cC\bmu^\alpha(z)}{\cC\mu^\alpha(z)} \right)^* = W^\alpha(\lambda)
\qquad\text{for }\mu^\alpha\ti{s}\text{-a.e.~}\lambda\in\partial \D, 
\end{align}
where, recall $\dd \bmu^\alpha = W^\alpha \dd \mu^\alpha$. 

Take $\lambda\in\partial \D$ such that both \eqref{e-tozero} and \eqref{Cmu/C} are satisfied (it happens for $\mu^\alpha\ti{s}$-a.e.~$\lambda\in\partial \D$). 
Let $\be\in \ran W^{\alpha}\ti s (\lambda)$, so
$\be = W^\alpha(\lambda) \mathbff$  for some $\mathbff\in \C^n$. 
Then it follows from \eqref{Cmu/C} that 
\[
\lim_{z  \nt \la } \left(\be - \left(\frac{\cC\bmu^\alpha(z)}{\cC\mu^\alpha(z)} \right)^* \mathbff\right) =\bO , 
\]
and the uniform boundedness of $b(z)^*$ and \eqref{Cmu/C} imply that 
\[
\lim_{z \nt \la } (I_n -  \alpha b(z)^* ) \left(\be - \left(\frac{\cC\bmu^\alpha(z)}{\cC\mu^\alpha(z)} \right)^* \mathbff\right) =\bO . 
\]
Therefore by \eqref{e-tozero} we get that 
\[
\lim_{z \nt \la } (I_n -  \alpha b(z)^* )\be = \lim_{z \nt \la } (I_n -  \alpha b(z)^* )  \left(\frac{\cC\bmu^\alpha(z)}{\cC\mu^\alpha(z)} \right)^* \mathbff =\bO,
\]
so
\[
\lim_{z\nt\lambda} \alpha b(z)^*\be = \be. 
\]
Left multiplying the above identity by $\alpha^*$, we get the conclusion. 
\end{proof}

\begin{defn}\label{d-direcionalsupport}
For $\lambda \in \mathbb{T}$ we define the \emph{directional carrier} of $\bmu^\alpha\ti{s}$ by $${\mathbf S}_\alpha (\lambda) := \left\{{\be}\in \C^n:\lim_{z\nt\lambda}b(z)^*\be  = \alpha^*\be \right\},$$ wherever the non-tangential limit exists. Further, let $\bS(\lambda):= \bS\ci{I_n}(\lambda)$.
\end{defn}

\begin{lemma}
For any $\lambda\in\T$ there holds $\bS(\lambda) \perp (I_n-
\alpha^*)\bS_\alpha(\lambda)$. 
\end{lemma}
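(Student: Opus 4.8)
The plan is to unwind the orthogonality relation into a statement about non-tangential limits. Fix $\lambda\in\T$ and pick $\be\in\bS(\lambda)$ and $\mathbf{g}\in\bS_\alpha(\lambda)$; by Definition \ref{d-direcionalsupport} (with $\alpha=I_n$ in the case of $\bS=\bS\ci{I_n}$) this means that the non-tangential limits $\lim_{z\nt\lambda}b(z)^*\be=\be$ and $\lim_{z\nt\lambda}b(z)^*\mathbf{g}=\alpha^*\mathbf{g}$ exist. Since $(I_n-\alpha^*)\bS_\alpha(\lambda)$ is spanned by the vectors $(I_n-\alpha^*)\mathbf{g}$ with $\mathbf{g}\in\bS_\alpha(\lambda)$, it suffices to show $\langle\be,(I_n-\alpha^*)\mathbf{g}\rangle=0$, i.e.\ $\langle\be,\alpha^*\mathbf{g}\rangle=\langle\be,\mathbf{g}\rangle$, where $\langle\,\cdot\,,\,\cdot\,\rangle$ is the standard inner product on $\C^n$, linear in its first entry. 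First I would exhibit each side as a non-tangential limit and then identify the two limits.

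The crux is the observation that, on the vectors belonging to the directional carriers, the contraction $b(z)$ behaves in the limit like an isometry. Set $A(z):=b(z)b(z)^*$, so that $0\le A(z)\le I_n$ because $b$ is a strict contraction on $\D$. For $\mathbf{g}\in\bS_\alpha(\lambda)$ I would compute $\langle A(z)\mathbf{g},\mathbf{g}\rangle=\|b(z)^*\mathbf{g}\|^2$, which tends to $\|\alpha^*\mathbf{g}\|^2=\|\mathbf{g}\|^2$; here the unitarity of $\alpha$ is essential, as it forces the limiting norm to be maximal. Combining this with the operator inequality $A(z)^2\le A(z)$, valid since $0\le A(z)\le I_n$, gives
\[
\|A(z)\mathbf{g}-\mathbf{g}\|^2
=\|A(z)\mathbf{g}\|^2-2\re\langle A(z)\mathbf{g},\mathbf{g}\rangle+\|\mathbf{g}\|^2
\le \|\mathbf{g}\|^2-\langle A(z)\mathbf{g},\mathbf{g}\rangle \longrightarrow 0
\]
as $z\nt\lambda$, so that $b(z)b(z)^*\mathbf{g}\to\mathbf{g}$. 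I expect this norm-maximality step, and the recognition that it is precisely where the unitarity of $\alpha$ enters, to be the main (if modest) obstacle; without it the argument would not close.

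With this in hand the conclusion follows from a single bilinear identity. Writing $\langle b(z)^*\be,b(z)^*\mathbf{g}\rangle=\langle\be,A(z)\mathbf{g}\rangle$ and letting $z\nt\lambda$, the left-hand side converges to $\langle\be,\alpha^*\mathbf{g}\rangle$ by the two defining limits for $\be$ and $\mathbf{g}$ together with continuity of the inner product in finite dimensions, while the right-hand side converges to $\langle\be,\mathbf{g}\rangle$ by the step above. Equating the two limits yields $\langle\be,\alpha^*\mathbf{g}\rangle=\langle\be,\mathbf{g}\rangle$, that is, $\langle\be,(I_n-\alpha^*)\mathbf{g}\rangle=\bO$. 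Since $\be\in\bS(\lambda)$ and $\mathbf{g}\in\bS_\alpha(\lambda)$ were arbitrary, this proves $\bS(\lambda)\perp(I_n-\alpha^*)\bS_\alpha(\lambda)$.
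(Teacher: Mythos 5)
Your proof is correct. It rests on the same underlying principle as the paper's proof --- a contraction that acts (asymptotically) isometrically on a vector must preserve inner products against that vector --- but the implementation is genuinely different. The paper passes to the limit first: taking $\bff\in\bS(\lambda)$ and $\be\in\bS_\alpha(\lambda)$, it defines $b(\lambda)^*$ as a linear map on the two-dimensional space $\spn\{\be,\bff\}$, observes that this limit map is a contraction acting isometrically on both generators, invokes as an ``easy exercise'' the fact that such a contraction is then isometric on the whole span, and concludes $(\be,\bff)_{\C^n}=(b(\lambda)^*\be,b(\lambda)^*\bff)_{\C^n}=(\alpha^*\be,\bff)_{\C^n}$. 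You never form the limit operator: you stay at finite $z$, prove the quantitative statement $b(z)b(z)^*\mathbf{g}\to\mathbf{g}$ from $\|b(z)^*\mathbf{g}\|\to\|\mathbf{g}\|$ via the operator inequality $A(z)^2\le A(z)$ for $0\le A(z)\le I_n$, and then pass to the limit in the identity $\langle b(z)^*\be,b(z)^*\mathbf{g}\rangle=\langle\be,A(z)\mathbf{g}\rangle$. Your route buys self-containedness --- it proves, in asymptotic form, exactly the fact the paper leaves as an exercise --- and it needs isometric behavior only on the $\bS_\alpha(\lambda)$ vector, while for the $\bS(\lambda)$ vector mere existence of the limit (which happens to equal the vector itself) suffices. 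The paper's route has the advantage that the limit operator $b(\lambda)^*$ on a span is constructed anyway, and the same device is reused in the proof of Corollary \ref{c-orthog}, so the setup is not wasted. One cosmetic slip: your final identity should read $\langle\be,(I_n-\alpha^*)\mathbf{g}\rangle=0$ (a scalar), not $\bO$.
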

\begin{proof}
Let $\bff \in \bS(\lambda)$, $\be\in \bS_\alpha(\lambda)$.  It follows from the definition of $\bS_\alpha(\lambda)$ that for any ${\bf h}\in \spn\{\be, \bff\}$ the limit $\lim_{z\nt\lambda}b(z)^* \bh$ exists. Slightly abusing the notation let us call this limit $b(\lambda)^*\bh$; note that $b(\lambda)^*$ defined this way is a linear transformation acting from  $\spn\{\be, \bff\}$ to $\C^n$. 

It is easy to see that $b(\lambda)^*$ is a contraction. Moreover, since it acts isometrically on $\be$ and $\bff$,  it is an easy exercise to show that it acts isometrically on all of $\spn\{\be, \bff\}$. 

We know that $b(\lambda)^*\be=\alpha^*\be$, $b(\lambda)^*\bff=\bff$. 
Therefore,  
for $\mbf{e}\in \bS_\alpha(\la)$ and $\mbf{f}\in \bS(\la)$
we obtain
\[
(\mbf{e}, \mbf{f})_{\C ^n}
=
(b(\la)^*\mbf{e}, b(\la)^*\mbf{f})_{\C ^n}
=
(\alpha^*\mbf{e}, \mbf{f})_{\C ^n}.
\]
So $( (I_n-\alpha^*)\mbf{e}, \mbf{f})_{\C^n} =0 $ and we have $\mbf{f}\perp (I_n-\alpha^*) \mbf{e}.$
\end{proof}
\begin{remark}
We do not know about the existence of $\lim_{z \nt \la} b(z)^*$. But with respect to $\mu^\alpha\ti{s}$-a.e.~$\lambda\in\partial \D$ we have learned that $\lim_{z \nt \la} b(z)^*\be $ exists for every $\be \in \ran{W^\alpha(\lambda)}$. Slightly abusing notation and always being cautious about the meaning, we denote $b(\la)^* = \lim_{z \nt \la} b(z)^*$.
\end{remark}

Denote by $b(\lambda)$ any non-tangentional limit point  $\lim_{k\to\infty}b(z_k)$,  as $z_k \nt\lambda$; it exists because $\|b(z)\|\le 1$, but it does not have to be unique.

A generalization of a vector analog of the Aronszajn--Donoghue Theorem (on the mutual singularity of singular parts for rank one perturbations) follows without much effort from Proposition \ref{p-sing}.
To formulate this result, we recall the notion of vector mutual singularity (see \cite[Definition 6.1]{LT_JST}).

\begin{defn}\label{d-vector}
Matrix-valued measures $\bmu$ and $\bnu$ are said to be \emph{vector mutually singular}, $\bmu\perp\bnu$, if there exists a measurable function $\Pi$ with values in the orthogonal projections on $\mathbb{C}^n$ so that
\[
\Pi\bmu\Pi = \bO,
\qquad
(I-\Pi)\bnu(I-\Pi) = \bO,
\]
the matrix-valued zero measure.
\end{defn}
We note that for a measure $\dd \bmu = W\dd \mu$ and a Borel measurable matrix-valued function $\Pi$, the measure $\Pi\bmu\Pi$ is given by
\[
\Pi\bmu\Pi(E) = 
\int\ci{E}\Pi(z)^*[\dd\bmu(z)]\Pi(z)= 
\int\ci{E}\Pi(z)^*W(z)\Pi(z)\dd\mu(z)
\]
for Borel set $E\subset\mathbb{T}$.

It is not difficult to see that this definition can equivalently be formulated in terms of the densities of $\bmu$ and $\bnu$, if they are extended appropriately:
Two matrix-valued measures $\bmu$ and $\bnu$ are vector mutually singular if and only if there exist densities $W$ and $V$ with $\dd\bmu = W \dd\mu$ and  $\dd\bnu = V \dd\nu$ that satisfy $\ran{W}(z)\perp\ran{V}(z)$ for $(\mu+\nu)$-a.e.~$z\in \T$.

Proposition \ref{p-sing} has the following corollary.

\begin{cor}\label{c-orthog}
For unitary $\alpha$, we have 
$ 
\bmu\ti{s}\perp (I_n-\alpha^*)\bmu^\alpha\ti{s} (I_n -\alpha) .$
\end{cor}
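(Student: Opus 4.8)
The plan is to verify the density–range criterion for vector mutual singularity recorded just after Definition \ref{d-vector}: it suffices to exhibit densities of $\bmu\ti{s}$ and of $\bnu\ti{s}:=(I_n-\alpha^*)\bmu^\alpha\ti{s}(I_n-\alpha)$ whose ranges are orthogonal almost everywhere. First I would fix a common scalar reference measure, namely $\sigma:=\mu\ti{s}+\mu^\alpha\ti{s}$ with $\mu\ti{s}=\tr\bmu\ti{s}$ and $\mu^\alpha\ti{s}=\tr\bmu^\alpha\ti{s}$. Both singular measures are absolutely continuous with respect to $\sigma$, so I may write $\dd\bmu\ti{s}=A\,\dd\sigma$ and $\dd\bmu^\alpha\ti{s}=B\,\dd\sigma$ with positive semidefinite measurable densities $A,B$; by the formula for $\Pi\bmu\Pi$ recorded after Definition \ref{d-vector}, the density of $\bnu\ti{s}$ with respect to $\sigma$ is $(I_n-\alpha^*)B(I_n-\alpha)$.

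The two inputs are Proposition \ref{p-sing} and the Lemma preceding Corollary \ref{c-orthog}. Applying Proposition \ref{p-sing} with $\alpha$ replaced by $I_n$ gives $\ran A(\lambda)\subseteq\bS(\lambda)$ for $\sigma$-a.e.\ $\lambda$ (on the $\mu\ti{s}$-null exceptional set the Radon--Nikodym factor $\dd\mu\ti{s}/\dd\sigma$ vanishes $\sigma$-a.e., so $A=\bO$ there and the inclusion is trivial). Applying Proposition \ref{p-sing} as stated gives $\ran B(\lambda)\subseteq\bS_\alpha(\lambda)$ for $\sigma$-a.e.\ $\lambda$; since $\ran(XY)\subseteq\ran X$ and $\ran[(I_n-\alpha^*)M]=(I_n-\alpha^*)\ran M$, this yields $\ran[(I_n-\alpha^*)B(I_n-\alpha)]\subseteq(I_n-\alpha^*)\bS_\alpha(\lambda)$. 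The Lemma asserts $\bS(\lambda)\perp(I_n-\alpha^*)\bS_\alpha(\lambda)$ for every $\lambda$, so the range of $A(\lambda)$ is orthogonal to the range of the density of $\bnu\ti{s}$ for $\sigma$-a.e.\ $\lambda$, which is exactly the range criterion.

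To match Definition \ref{d-vector} verbatim I would instead produce the projection directly: let $\Pi(\lambda)$ be the orthogonal projection onto the orthogonal complement of $\ran A(\lambda)$. Then $\Pi A=\bO$ gives $\Pi\bmu\ti{s}\Pi=\bO$, while the orthogonality established above places $\ran[(I_n-\alpha^*)B(I_n-\alpha)]$ inside $\ker(I-\Pi)$, whence $(I-\Pi)(I_n-\alpha^*)B(I_n-\alpha)=\bO$ and therefore $(I-\Pi)\bnu\ti{s}(I-\Pi)=\bO$. The genuinely necessary care is the bookkeeping in passing from the two different reference measures $\mu\ti{s}$ and $\mu^\alpha\ti{s}$ appearing in Proposition \ref{p-sing} to the single reference $\sigma$, together with the measurability of $\lambda\mapsto\Pi(\lambda)$ (which follows from measurability of the positive semidefinite density $A$). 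The one conceptual step, and the crux, is recognizing that Proposition \ref{p-sing} confines the ranges of the two singular densities to the directional carriers $\bS$ and $\bS_\alpha$, after which the Lemma supplies the orthogonality for free, with no further boundary-value or Cauchy-transform analysis required.
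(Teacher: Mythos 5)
Your proposal is correct and follows essentially the same route as the paper's own proof: both rest on the range inclusions $\ran{A(\lambda)}\subseteq\bS(\lambda)$, $\ran{B(\lambda)}\subseteq\bS_\alpha(\lambda)$ from Proposition \ref{p-sing}, a common reference measure $\mu\ti{s}+\mu^\alpha\ti{s}$ with careful choice of density representatives, the orthogonality $\bS(\lambda)\perp(I_n-\alpha^*)\bS_\alpha(\lambda)$, and the range criterion for vector mutual singularity. The only (immaterial) difference is that you invoke the Lemma as a black box, whereas the paper re-derives its isometry argument inside the corollary's proof on the sum $\bS(\lambda)+\bS_\alpha(\lambda)$.
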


\begin{remark}
Since the absolutely continuous part of any scalar measure is always mutually singular to any singular measure, we can drop the singular part on either of the matrix-valued measures in Corollary \ref{c-orthog}. So, for unitary $\alpha$, we have both
\begin{align*}
\bmu&\perp (I_n-\alpha^*)\bmu^\alpha\ti{s} (I_n -\alpha),\text{ and}
\\
\bmu\ti{s}&\perp (I_n-\alpha^*)\bmu^\alpha (I_n -\alpha).
\end{align*}
\end{remark}

\begin{proof}[Proof of Corollary \ref{c-orthog}]
Proposition \ref{p-sing} yields
\begin{align}
\label{ran W subset S}
\ran{W^\alpha(\la)} \subset {\bS}_\alpha(\lambda) \ \ \mu\ti s\text{-a.e.}
\quad\text{and}\quad
\ran{W(\la)} \subset \bS(\lambda)\ \ \mu^\alpha\ti s\text{-a.e.}, 
\end{align}
where $\bS(\lambda):=  \bS\ci{I_n}(\lambda)$.
In fact, we can always assume without loss of generality that the above inclusions \eqref{ran W subset S} holds $(\mu\ti s + \mu^\alpha\ti s)$-a.e.; we just need to pick appropriate representatives for densities $W$ and $W^\alpha$. 

To pick such representatives, let us notice that the measures $\mu$ and $\mu^\alpha$  are absolutely continuous with respect to the measure $\mu+ \mu^\alpha$, so 
\[
\dd \mu = u \dd  (\mu + \mu^\alpha), \qquad 
\dd \mu^\alpha = u^\alpha \dd  (\mu + \mu^\alpha) , 
\]
which implies
\[
\dd \mu\ti s = u \dd  (\mu\ti s + \mu^\alpha\ti s), \qquad 
\dd \mu^\alpha\ti s = u^\alpha \dd  (\mu\ti s + \mu^\alpha\ti s)  .
\]
Define $E=\{\xi\in \T: u(\xi)>0\}$, $E^\alpha=\{\xi\in \T: u^\alpha(\xi)>0\}$. Replacing the densities $W$ and $W^\alpha$ by $\1\ci{E}W$ and $\1\ci{E^\alpha}W^\alpha$ respectively, 
we do not change the measures $\dd \bmu = W\dd\mu$ and $\dd\bmu^\alpha= W^\alpha\dd\bmu^\alpha$. 

But for such choice of densities, any statement about $W$  that holds $\mu$-a.e.~or $\mu\ti s$-a.e.~also holds $(\mu+\mu^\alpha)$-a.e.~or $(\mu\ti s+\mu^\alpha\ti s)$-a.e.~respectively; and similarly for $W^\alpha$. So indeed, we can assume without loss of generality that inclusions \eqref{ran W subset S} holds $(\mu\ti s+\mu^\alpha\ti s)$-a.e. 

It follows from the definition of $\bS_\alpha(\lambda)$ that for $\lambda\in\T$ the limit $\lim_{z\nt\lambda}b(z)^*\be$ exists for all $\be\in \bS(\lambda)+\bS_\alpha(\lambda)$. This limit clearly defines a linear transformation from $\bS(\lambda)+\bS_\alpha(\lambda)$ to $\C^n$, which we, slightly abusing notation, will denote $b(\lambda)^*$. 

Clearly, $b(\lambda)^*$ is a contraction. Since by Proposition \ref{p-sing}
$\|b(\lambda)^* \be\|= \|\be\|$ for all $\be \in \bS(\lambda)$ and for all $\be \in\bS_\alpha(\lambda)$, we can conclude that $b(\lambda)^*$ acts isometrically on $\bS(\lambda)+\bS_\alpha(\lambda)$.

Therefore,  
for $\mbf{e}\in \bS_{\balpha} (\la)$ and $\mbf{f}\in \bS(\la)$
we obtain
\[
(\mbf{e}, \mbf{f})_{\C ^n}
=
(b(\la)^*\mbf{e}, b(\la)^*\mbf{f})_{\C ^n}
=
(\alpha^*\mbf{e}, \mbf{f})_{\C ^n}.
\]
So $( (I_n-\alpha^*)\mbf{e}, \mbf{f})_{\C^n} =0 $ and we have $\mbf{f}\perp (I_n-\alpha^*) \mbf{e}.$

We have shown that inclusions \eqref{ran W subset S} hold for $(\mu^\alpha+\mu)\ti{s}$-a.e.~$\lambda\in \mathbb{T}$, so
\[
\ran{W (\la)}\perp (I_n-\alpha^*)\ran{W^\alpha(\la)}
\]
for $(\mu^\alpha+\mu)\ti{s}$-a.e.~$\lambda\in \mathbb{T}$, and that is equivalent to the statement.
\end{proof}

%%%%%%%%%%%%%%%%%%%%%%%%%%%%%%%%%%%%%%%%%%%%%%%%%%%%%%%%%%%%%%%%%%%%%%%%%%%%%%%%%%%%%%%%%%
\section{Strong mutual singularity}\label{s-traces}
%%%%%%%%%%%%%%%%%%%%%%%%%%%%%%%%%%%%%%%%%%%%%%%%%%%%%%%%%%%%%%%%%%%%%%%%%%%%%%%%%%%%%
The vector mutual singularity from Corollary \ref{c-orthog} is used to show a strong mutual singularity for the traces.
\begin{thm}
\label{t:countable_A-D}
Let  $\alpha:\R\to \cU(n)$  %$t\mapsto\alpha(t)$ 
be a $C^1$ function such that for all $t\in\R$ its ``logarithmic derivative'' $i\alpha'(t)\alpha(t)^{-1}$ is sign definite. Then, given any singular Radon measure $\nu$ on $\R$, the scalar measures $\mu^{\alpha(t)} := \tr \bmu^{\alpha(t)}$ are mutually singular with $\nu$ for all $t\in\R$ except probably countably many. 
\end{thm}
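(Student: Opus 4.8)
The plan is to show that the ``failure set'' $T:=\{t\in\R:\mu^{\alpha(t)}\ti{s}\not\perp\nu\}$ is countable by combining the directional information of Proposition \ref{p-sing} with the rotational monotonicity forced by the sign-definiteness hypothesis. First I would extract a pointwise consequence of Proposition \ref{p-sing}. Since $W^{\alpha(t)}$ is the density of $\bmu^{\alpha(t)}$ against its own trace, $\ran W^{\alpha(t)}(\lambda)\neq\{\bO\}$ for $\mu^{\alpha(t)}\ti{s}$-a.e.\ $\lambda$ (its trace does not vanish, as $\mu^{\alpha(t)}=\tr\bmu^{\alpha(t)}$), so Proposition \ref{p-sing} yields a nonzero $\be$ with $\lim_{z\nt\lambda}b(z)^*\be=\alpha(t)^*\be$; that is, $t$ lies in
\[
T(\lambda):=\bigl\{t\in\R:\ \exists\,\be\neq\bO,\ \lim_{z\nt\lambda}b(z)^*\be=\alpha(t)^*\be\bigr\}.
\]
If $\mu^{\alpha(t)}\ti{s}\not\perp\nu$, then the part $\nu_t$ of $\nu$ absolutely continuous with respect to $\mu^{\alpha(t)}\ti{s}$ is nonzero, and applying the above on a Borel carrier $E_t$ of $\nu_t$ gives a measurable set with $E_t\subseteq\{\lambda:t\in T(\lambda)\}$ and $\nu(E_t)\ge\nu_t(\T)>0$.

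The heart of the argument is a bound on how many parameters a single point can ``see'': for every $\lambda$ and every compact $[a,b]$ the set $T(\lambda)\cap[a,b]$ has at most $N(a,b):=n\bigl(\tfrac{(b-a)M}{2\pi}+1\bigr)$ elements, where $M:=\max_{[a,b]}\|i\alpha'\alpha^{-1}\|$, a bound \emph{independent of $\lambda$}. To see this, fix $\lambda$, let $V(\lambda):=\spn\bigl(\bigcup_t\bS_{\alpha(t)}(\lambda)\bigr)$, and argue as in the proof of Corollary \ref{c-orthog}: the limit $U:=b(\lambda)^*$ exists on $V(\lambda)$ and, being a contraction isometric on a spanning set, is an isometry of $V(\lambda)$. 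Extend $U$ to a unitary $\wt U$ on $\C^n$. For $t\in T(\lambda)$ the corresponding $\be\in V(\lambda)$ satisfies $\alpha(t)\wt U\be=\alpha(t)U\be=\be$, so $1$ is an eigenvalue of the unitary $\alpha(t)\wt U$. Writing its eigenvalues as continuous branches $e^{i\theta_j(t)}$, first-order perturbation theory gives $\theta_j'(t)=-(i\alpha'\alpha^{-1}\be_j,\be_j)_{\C^n}$, and here the sign-definiteness enters decisively: each $\theta_j$ is \emph{strictly} monotone with $|\theta_j'|\le M$, so each branch meets the value $1$ at most $\tfrac{(b-a)M}{2\pi}+1$ times; summing over the $n$ branches gives the claim.

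With these two ingredients the countability of $T$ is a soft counting step. Split $T=\bigcup_m T_m$ with $T_m:=\{t\in T:\nu(E_t)>1/m\}$; it suffices to bound $\#(T_m\cap[a,b])$. For distinct $t_1,\dots,t_N\in T_m\cap[a,b]$ the pointwise bound gives $\sum_i\1\ci{E_{t_i}}(\lambda)=\#\{i:t_i\in T(\lambda)\}\le N(a,b)$, so integrating against $\nu$ (finite, being Radon on the compact $\T$) yields
\[
\frac{N}{m}<\sum_{i=1}^N\nu(E_{t_i})=\int_\T\sum_{i=1}^N\1\ci{E_{t_i}}\,\dd\nu\le N(a,b)\,\nu(\T).
\]
Hence $N<m\,N(a,b)\,\nu(\T)$, so $T_m\cap[a,b]$ is finite; letting $m\to\infty$ and exhausting $\R$ by compact intervals shows $T$ is countable, which is the assertion (after noting that $\mu^{\alpha(t)}\ti{ac}\perp\nu$ automatically, since $\nu$ is singular).

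I expect the main obstacle to be the eigenvalue-monotonicity step, specifically making first-order perturbation theory rigorous for a merely $C^1$ (rather than real-analytic) family of unitaries, where crossings obstruct globally smooth branches. The remedy is to use only \emph{continuous} eigenvalue branches (which exist for any continuous family), together with the facts that they are Lipschitz with constant $M$ and strictly monotone wherever differentiable, i.e.\ almost everywhere; the sign-definiteness of $i\alpha'\alpha^{-1}$—which, by continuity of $\alpha'$ and connectedness of $\R$, has one and the same sign for all $t$—is exactly what prevents a branch from lingering at the value $1$ and thus guarantees the finite crossing count. A secondary point needing care is the Borel measurability of the carriers $E_t$, which follows from the standard measurability of non-tangential limit operations.
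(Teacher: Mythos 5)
Your overall architecture is genuinely different from the paper's, and most of it is sound. The paper fixes a small parameter interval, uses Corollary \ref{c-orthog} plus Lemma \ref{l:A-ort-distance} to show that the unit vector fields $f_t\in L^2(\nu)$ attached to two non-singular parameters $t\ne t'$ are uniformly separated in $L^2(\nu)$, and concludes by separability of $L^2(\nu)$; you instead prove a bound, uniform in $\lambda$, on $\#\bigl(T(\lambda)\cap[a,b]\bigr)$ and then integrate against $\nu$ (a Chebyshev-type count). Your steps extracting $E_t$ from Proposition \ref{p-sing}, the reduction of membership in $T(\lambda)$ to ``$1\in\sigma(\alpha(t)\wt U)$'' via the isometry $U=b(\lambda)^*$ on $V(\lambda)$ and its unitary extension $\wt U$ (this is exactly the isometry trick from the paper's proof of Corollary \ref{c-orthog}), and the final counting step are all correct.

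The genuine gap is the eigenvalue-counting lemma, and your proposed remedy does not close it. For a merely $C^1$ family $t\mapsto \alpha(t)\wt U$, the set of times at which the spectrum is degenerate can have positive measure --- indeed it can be all of $[a,b]$: take $\alpha(t)=e^{-it}I_n$ (perfectly admissible, $i\alpha'\alpha^{-1}=I_n>0$) and $\wt U=I_n$. So crossing times cannot be treated as negligible; and at a crossing time, even if a continuous branch $\theta_j$ happens to be differentiable there, first-order perturbation theory does not apply (it requires a simple eigenvalue, or at least differentiable spectral projections), so the assertion ``strictly monotone wherever differentiable'' is precisely what remains unproven on a possibly positive-measure set. (The Lipschitz bound for arbitrary continuous branches is also asserted rather than proved, though that one is true.) The lemma itself is correct, and can be fixed in two ways. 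One is to avoid differentiating branches altogether: with $V(t)=\alpha(t)\wt U$ one has $V'=-iHV$, $H=i\alpha'\alpha^{-1}\ge\e I$ on $[a,b]$, and the local Cayley transform $K=i(e^{i\phi}I_n+V)(e^{i\phi}I_n-V)^{-1}$ satisfies $K'\le -c\,I$, so Weyl's monotonicity principle makes the \emph{ordered} eigenvalues of $K$ strictly decreasing through crossings. The second fix is more economical and stays entirely inside the paper's toolbox: for $t\ne t'$ in a small interval $J\ni t_0$ as in \eqref{d_alpha}, unit vectors $\be_t\in\bS_{\alpha(t)}(\lambda)$ and $\be_{t'}\in\bS_{\alpha(t')}(\lambda)$ satisfy, by your isometry step, $\bigl((I_n-\alpha(t')\alpha(t)^{-1})\be_t,\be_{t'}\bigr)_{\C^n}=0$; dividing by $t'-t$ and applying Lemma \ref{l:A-ort-distance} gives $\|\be_t-\be_{t'}\|^2\ge 2c$, so by compactness of the unit sphere of $\C^n$ at most $N_0(n,c)$ parameters of $T(\lambda)$ lie in $J$, a bound independent of $\lambda$; covering $[a,b]$ by finitely many such intervals yields your uniform $N(a,b)$. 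With either repair your proof goes through, and the second one makes clear that your scheme is a pointwise-in-$\lambda$, compactness-based counterpart of the paper's $L^2(\nu)$-separability argument, resting on the same two pillars (the isometry trick and Lemma \ref{l:A-ort-distance}) rather than on spectral perturbation theory.
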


\begin{remark*}
Note that if $\alpha(t)\in\cU(n)$ for all $t$, the ``logarithmic derivative'' $i\alpha'(t)\alpha(t)^{-1}$ is always Hermitian. It follows, for example from the description of the tangent space to $\cU(n)$; an elementary proof is also easy. 

Note also that the matrices $\alpha(t)^{-1}\alpha'(t)$ and $\alpha'(t) \alpha(t)^{-1}$ are unitarily equivalent, so in the above Theorem \ref{t:countable_A-D}  we can use the condition that the matrix $i\alpha(t)^{-1}\alpha'(t)$ is sign definite. 
\end{remark*}

\begin{lemma}
\label{l:A-ort-distance}
Let $A=A^*$ be a sign definite matrix. Then for a sufficiently small $\delta>0$ for any matrix $\wit A$ (not necessarily Hermitian) such that $\|A-\wit A\|<\delta$ the condition $(\wit A\bx, \by)_{\C^n}=0$ implies that 
\[
\|\bx-\by\|^2 \ge c \cdot \left( \|\bx\|^2+\|\by\|^2 \right), 
\]
where $c=c(A, \delta)$.
\end{lemma}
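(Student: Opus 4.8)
The plan is to reduce everything to the positive definite case and then exploit the positive square root of $A$. Since $A=A^*$ is sign definite, after possibly replacing the pair $(A,\wit A)$ by $(-A,-\wit A)$ (which leaves both the hypothesis $\|A-\wit A\|<\delta$ and the condition $(\wit A\bx,\by)_{\C^n}=0$ unchanged) I may assume $A>0$. Set $a:=\lambda_{\min}(A)>0$ and $M:=\|A\|=\lambda_{\max}(A)$, and let $A^{1/2}$ be the positive square root, so that $\|A^{1/2}\|=\sqrt M$ and $\|A^{-1/2}\|=1/\sqrt a$. The guiding observation is that for the \emph{exact} case $\wit A=A$ the identity $(A\bx,\by)_{\C^n}=(A^{1/2}\bx,A^{1/2}\by)_{\C^n}$ (using that $A^{1/2}$ is self-adjoint) turns $(A\bx,\by)_{\C^n}=0$ into genuine orthogonality $A^{1/2}\bx\perp A^{1/2}\by$; then writing $\bx-\by=A^{-1/2}(A^{1/2}\bx-A^{1/2}\by)$ and sandwiching with the extreme eigenvalues yields $\|\bx-\by\|^2\ge (a/M)(\|\bx\|^2+\|\by\|^2)$. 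This model computation already produces a constant of the expected shape.

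Next I would pass to the perturbed, non-Hermitian case. Writing $(A\bx,\by)_{\C^n}=((A-\wit A)\bx,\by)_{\C^n}$ and using $\|A-\wit A\|<\delta$ gives the \emph{approximate} orthogonality
\[
|(A^{1/2}\bx,A^{1/2}\by)_{\C^n}| = |(A\bx,\by)_{\C^n}| \le \delta\,\|\bx\|\,\|\by\|
\le \frac{\delta}{a}\,\|A^{1/2}\bx\|\,\|A^{1/2}\by\|,
\]
where the last step uses $\|\bx\|=\|A^{-1/2}(A^{1/2}\bx)\|\le \tfrac{1}{\sqrt a}\|A^{1/2}\bx\|$ and likewise for $\by$. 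Expanding $\|A^{1/2}\bx-A^{1/2}\by\|^2=\|A^{1/2}\bx\|^2+\|A^{1/2}\by\|^2-2\re (A^{1/2}\bx,A^{1/2}\by)_{\C^n}$, bounding $\re(\cdot)\le|\cdot|$, and applying the arithmetic–geometric mean inequality $\|A^{1/2}\bx\|\,\|A^{1/2}\by\|\le\tfrac12(\|A^{1/2}\bx\|^2+\|A^{1/2}\by\|^2)$, I obtain $\|A^{1/2}\bx-A^{1/2}\by\|^2\ge (1-\delta/a)\,(\|A^{1/2}\bx\|^2+\|A^{1/2}\by\|^2)$.

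To finish I would convert back via $\|\bx-\by\|^2\ge \tfrac{1}{M}\|A^{1/2}\bx-A^{1/2}\by\|^2$ and $\|A^{1/2}\bx\|^2+\|A^{1/2}\by\|^2\ge a(\|\bx\|^2+\|\by\|^2)$, which combine to give
\[
\|\bx-\by\|^2 \ge \frac{a-\delta}{M}\,(\|\bx\|^2+\|\by\|^2),
\]
so the claim holds with $c=c(A,\delta)=(\lambda_{\min}(A)-\delta)/\|A\|$, provided $\delta<\lambda_{\min}(A)$ (this is the meaning of ``$\delta$ sufficiently small''). The argument is short and there is no deep obstacle; the only genuine point requiring care is the treatment of the \emph{non-Hermitian} perturbation, namely replacing exact orthogonality by the quantitative almost-orthogonality estimate above and then keeping track of the two factors of $\sqrt a$ and $\sqrt M$ so that the final constant is manifestly positive for small $\delta$. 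The positive square root $A^{1/2}$ is the key device that makes the sign definiteness of $A$ usable.
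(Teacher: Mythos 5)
Your proof is correct and is essentially the paper's own argument: the paper reduces to $A=I$ by passing to an equivalent norm on $\C^n$ --- which is precisely the norm $\|A^{1/2}\cdot\|$ you work with --- and then uses the same approximate-orthogonality bound $|(\bx,\by)\ci{\C^n}|\le\delta\|\bx\|\,\|\by\|$ together with the expansion of $\|\bx-\by\|^2$ and the AM--GM inequality. Your version simply makes the renorming explicit via the square root and tracks the constants, arriving at the concrete value $c=(\lambda_{\min}(A)-\delta)/\|A\|$.
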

\begin{proof}
Replacing the norm in $\C^n$ by an equivalent one we can assume without loss of generality that $A=I$. The condition $\|I-\wit A\|<\delta$ together with the assumption $(\wit A\bx, \by)_{\C^n}=0$ imply that $|(\bx, \by)_{\C^n} | \le \delta \|\bx\|\cdot\|\by\|$. Then for $\delta<1$
\begin{align*}
\|\bx-\by\|^2 & = \|\bx\|^2 + \|\by\|^2 - 2\re (\bx, \by)_{\C^n} \\
&  \ge \|\bx\|^2 + \|\by\|^2 -2\delta \|\bx\|\cdot\|\by\|  \\
& \ge c(\delta) \left( \|\bx\|^2 + \|\by\|^2 \right).
\end{align*}
The lemma follows.
\end{proof}

\begin{proof}[Proof of Theorem \ref{t:countable_A-D}]
Fix $t_0\in\R$. Differentiability of $\alpha$ implies that 
\begin{align*}
\alpha(t) - \alpha(t_0) = \left( \alpha'(t_0) +o(1) \right) \cdot (t-t_0) \qquad \text{as }t\to t_0.
\end{align*}
Therefore, for any $\e>0$ there exists an open neighborhood $U\ni t_0$ such that for any $t, t'\in U$ we have 
\begin{align*}
\alpha(t') - \alpha(t)   = \left( \alpha'(t_0) +r(t',t) \right) \cdot (t'-t), \qquad \| r(t',t)\|<\e. 
\end{align*}
Continuity of $\alpha$ implies that for any $\delta>0$ we can find a neighborhood $\wit U\ni t_0$ such that for all $t, t'\in\wit U$
\begin{align}
\label{d_alpha} 
\alpha(t') \alpha(t)^{-1} - I = \left( \alpha'(t_0)\alpha(t_0)^{-1} +\wit r(t',t) \right) \cdot (t'-t), \qquad \| \wit r(t',t)\|<\delta.
\end{align}
One of the operators $\pm i \alpha'(t_0)\alpha(t_0)^{-1}$ is positive definite. Pick sufficiently small $\delta$ in \eqref{d_alpha} such that Lemma \ref{l:A-ort-distance} will apply to $A=\pm i\alpha'(t_0)\alpha(t_0)^{-1}$  with some $c>0$. 

Let $t\in \wit U$ be such that the scalar measure $\mu^{\alpha(t)}$ is not mutually singular with $\nu$. Let $\wt \mu^{\alpha(t)}$ and $\wt \mu^{\alpha(t')}$ be the absolutely continuous with respect $\nu$ parts of $\mu^{\alpha(t)}$,   $\dd \wt\bmu^{\alpha(t)}\ti{s} = \wt W^{\alpha(t)}\dd\nu $. Take a function $f_t\in L^2(\nu)$, $\|f_t\|\ci{ L^2(\nu)}=1$, such that 
\begin{align}
\label{range iclusion 01}
f_t(\xi) \in \ran \wt W^{\alpha(t)} (\xi)  \qquad \text{for } \nu\text{-a.e.~}\xi\in \T.
\end{align}

Now, let $t,t'\in\wit U$ be such that both scalar measures $\mu^{\alpha(t)}$, $\mu^{\alpha(t')}$ are not mutually singular with $\nu$.

By Corollary \ref{c-orthog} we have $\bmu^{\alpha(t)}\ti{s}\perp (I_n-\alpha^*)\bmu^{\alpha(t')}\ti{s} (I_n -\alpha)$, where we used $\alpha= \alpha(t')\alpha(t)^{-1}$, and so
\begin{align*}
\wt\bmu^{\alpha(t)}\ti{s}\perp (I_n-\alpha^*)\wt\bmu^{\alpha(t')}\ti{s} (I_n -\alpha) .
\end{align*}
This implies implies 
\begin{align*}
\ran \wt W^{\alpha(t)} (\xi)\perp \ran ( (I_n-\alpha^*) \wt W^{\alpha(t)} (\xi) )\qquad \text{for } \nu\text{-a.e.~}\xi\in \T. 
\end{align*}
So for the functions functions $f_t, f_{t'}\in L^2(\nu)$, $\|f_t\|\ci{L^2(\nu)}=\|f_{t'}\|\ci{L^2(\nu)}=1$ defined above in \eqref{range iclusion 01}  we have, 
\begin{align*}
f_t(\xi) \perp (I_n-\alpha^*) f_{t'}(\xi) \qquad \text{for } \nu\text{-a.e.~}\xi\in \T.
\end{align*}
Lemma \ref{l:A-ort-distance} implies that 
\[
\|f_t(\xi) - f_{t'}(\xi) \|\ci{\C^n}^2 \ge c\cdot (\|f_t(\xi)\|\ci{\C^n}^2 +\| f_{t'}(\xi) \|\ci{\C^n}^2 ) \qquad \text{for } \nu\text{-a.e.~}\xi\in \T, 
\]
and integrating with respect to $\dd \nu(\xi)$ we get that 
\[
\|f_t- f_{t'}\|\ci{L^2(\nu)}^2 \ge c. 
\]
So, by the separability of $L^2(\nu)$, the measures $\mu^{\alpha(t)}$ can be not mutually singular with $\nu$ only for countably many $t\in \wit U$. 

Using standard compactness reasoning we get that any compact $K\subset \R$ can have at most countably many such $t$'s, and covering $\R$ by countably many compacts we get the conclusion of the theorem. 
\end{proof}

\section{Carath\'{e}odory condition}\label{s-CAD}
This section will lay the ground work for the investigation of the Carath\'{e}odory angular derivative that will be done later in Sections \ref{s-CAD2} and \ref{s-CaraPoint}.

Recall that for a function $b\in\scr S(n)$ the de Branges--Rovnyak space $\scr H(b)$ is defined as follows. Let $T_b: H^2(\C^n)\to H^2(\C^n)$ be the (analytic) Toeplitz operator, 
\[
T_b f := bf, \qquad f\in H^2 (\C^n) 
\]
and $T_b^*$ be its adjoint. 

Then the de Branges--Rovnyak space $\scr H(b)$ is the range of the operator $R_b:= (I_n - T_b T_b^*)^{1/2}$ endowed with the 
\emph{range norm}, 
\[
\| f\|\ci{\scr H(b)} = \inf\left\{ \|h\|\ci{H^2(\C^n)} : h\in H^2(\C^n) \text{ such that } R_b h = f\right\} . 
\]
Clearly, $\scr H(b) \subset H^2(\C^n)$ and $\| f \|\ci{\scr H(b)}  \ge \| f \|\ci{H^2(\C^n)}.$

The matrix-valued function $k^b(z,w)=: k^b_w(z)$ defined in \eqref{d-kernel}, 
is the \emph{matrix reproducing kernel} for $\scr H(b)$, meaning that for any $w\in \D$ and $\be\in\C^n$
\begin{align}\label{e-reproducing}
( f(w), \be)\ci{\C^n} = \left\langle  f, k^b_w \be \right\rangle_{\scr H(b)}.
\end{align}
Finally, let us mention that the linear combinations of functions $k^b_w\be$, $w\in\D$, $\be\in \C^n$ are dense in $\scr H(b)$. Indeed, if $f\in\scr H(b)$ is orthogonal to all $k^b_w\be$, the reproducing kernel property \eqref{e-reproducing} implies that $f\equiv  0$.

\begin{defn}\label{d-CCond}
We say  that a function $b\in\scr{S}(n)$ satisfies the Carath\'{e}odory condition in the codirection $\bx\in\C^n$ at a point $\lambda\in \T$ if 
\begin{align}
\label{e:Caratheodory cond}
\liminf_{z\nt \lambda} \frac{\|\bx\|^2 - \|b(z)^*\bx\|^2} {1-|z|^2} <\infty. 
\end{align}
\end{defn}

\begin{prop}\label{p-equivalences}
Given $b \in \scr{S} (n)$ and a non-zero vector $\mbf{x} \in \C ^n$ and $\la \in \partial \D$, the following are equivalent:
\bn
\item The function $b$ satisfies Carath\'{e}odory condition in the codirection $\bx$ at the point $\lambda$.

\item There exists $\wit\bx \in\C^n$ 
such that the function 
\begin{align*}
\frac{\bx - b(z) \wit\bx}{1-z\overline{\lambda}}
\end{align*}
belongs to $H^2(\C^n)$.

\item For any $f\in \scr H(b)$ the limit 
\[
\lim _{z\nt \la} \ipcn{\mbf{x}}{f(z )} =:  
\ell _{\la , \mbf{x}} (f)
\]
exists and the    
linear functional $f\mapsto \ell _{\la , \mbf{x}} (f) 
$ is bounded on $\scr{H} (b)$.

\item A stronger version of the Carath\'{e}odory condition holds, i.e. 
\begin{align*}
\limsup_{z\nt \lambda} \frac{\|\bx\|^2 - \|b(z)^*\bx\|^2} {1-|z|^2} <\infty. 
\end{align*}
\en

Moreover, if the above conditions are satisfied, then 
\begin{align*}
\lim_{z\nt \lambda} b(z)^*\bx =: b^*(\lambda, \bx)
\end{align*}
exists and equals to $\wit\bx$ from Statement \cond2, the function 
\begin{align}\label{e-brpk}
k^b_{\lambda, \bx} (z) := \frac{\bx - b(z)b^*(\lambda, \bx)}{1-z\overline\lambda}
\in \scr H(b)
\end{align}
and 
the linear functional $\ell _{\la , \mbf{x}}$ is given by $ \ell _{\la , \mbf{x}} (f) = \langle f, k^b_{\lambda, \bx} \rangle\ci{\scr H(b)}$.
\end{prop}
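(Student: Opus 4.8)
The linchpin of the plan is the \emph{bridge identity}
\begin{align*}
\|k^b_z\bx\|_{\scr{H}(b)}^2 = \big(k^b(z,z)\bx,\bx\big)_{\C^n} = \frac{\|\bx\|^2-\|b(z)^*\bx\|^2}{1-|z|^2}, \qquad z\in\D,
\end{align*}
which follows at once from the reproducing property \eqref{e-reproducing} applied to $f=k^b_z\bx$ together with the kernel formula \eqref{d-kernel}. Writing $g_z:=k^b_z\bx$, this identifies the Carath\'eodory quotient in \eqref{e:Caratheodory cond} with $\|g_z\|_{\scr{H}(b)}^2$, so that Statement (i) reads exactly $\liminf_{z\nt\lambda}\|g_z\|_{\scr{H}(b)}^2<\infty$ and Statement (iv) reads $\limsup_{z\nt\lambda}\|g_z\|_{\scr{H}(b)}^2<\infty$. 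I would then close the loop $(\mathrm{iv})\Rightarrow(\mathrm{i})\Rightarrow(\mathrm{ii})\Rightarrow(\mathrm{iii})\Rightarrow(\mathrm{iv})$; in particular this promotes the $\liminf$ condition to the $\limsup$ condition without any direct real-variable estimate, which is the pleasant conceptual point of the argument.

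Three of the four arrows are short. The implication $(\mathrm{iv})\Rightarrow(\mathrm{i})$ is trivial. For $(\mathrm{iii})\Rightarrow(\mathrm{iv})$, note that $\ipcn{\bx}{f(z)}=\langle f,g_z\rangle_{\scr{H}(b)}$, so Statement (iii) says that the net $\{\langle f,g_z\rangle_{\scr{H}(b)}\}$ converges, hence is bounded, for each $f\in\scr{H}(b)$; the uniform boundedness principle applied to the family $\{g_z\}_{z\in\Ga_t(\lambda)}$ then yields $\sup_z\|g_z\|_{\scr{H}(b)}<\infty$, which is (iv). For $(\mathrm{i})\Rightarrow(\mathrm{ii})$, I pick $z_k\nt\lambda$ with $\|g_{z_k}\|_{\scr{H}(b)}$ bounded; since $\|b(z_k)^*\bx\|\le\|\bx\|$, after passing to a subsequence $b(z_k)^*\bx\to\wit\bx$ in $\C^n$, and by weak compactness of balls a further subsequence gives $g_{z_k}\rightharpoonup h$ weakly in $\scr{H}(b)$. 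Weak convergence forces pointwise convergence (point evaluations are bounded on $\scr{H}(b)$), identifying the limit as $h(z)=(\bx-b(z)\wit\bx)/(1-z\overline{\lambda})$; thus $h\in\scr{H}(b)\subset H^2(\C^n)$, which is (ii).

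The substance is the remaining arrow $(\mathrm{ii})\Rightarrow(\mathrm{iii})$ together with the identification of the limit. From (ii), membership of $(\bx-b(z)\wit\bx)/(1-z\overline\lambda)$ in $H^2(\C^n)$ forces $\bx-b(z)\wit\bx\in(1-z\overline\lambda)H^2(\C^n)$, whence $b(z)\wit\bx\to\bx$ non-tangentially at $\lambda$. The crux, and what I expect to be the main obstacle, is to upgrade this to the \emph{full} non-tangential limit $\lim_{z\nt\lambda}b(z)^*\bx=\wit\bx$: a priori $b(z)^*\bx$ has only subsequential limit points $\mathbf{v}$, each producing (as in the previous paragraph) a weak limit $h_{\mathbf{v}}=(\bx-b(z)\mathbf{v})/(1-z\overline\lambda)\in\scr{H}(b)$, and one must show all such $\mathbf{v}$ coincide. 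The difficulty is genuinely of Julia--Carath\'eodory type, since $\|\bx\|^2-\|b(z)^*\bx\|^2\to0$ while $1-|z|^2\to0$, so the quotient is indeterminate. I would resolve it by combining $b(z)\wit\bx\to\bx$ with the bridge identity to deduce $\|\wit\bx\|=\|\bx\|$ and $\|b(z)^*\bx\|\to\|\bx\|$, so that the scalar Schur function $z\mapsto(b(z)\wit\bx,\bx)_{\C^n}/(\|\bx\|\,\|\wit\bx\|)$ acquires a unimodular boundary value at $\lambda$; scalar Julia--Carath\'eodory theory then both pins down the non-tangential behaviour (forcing uniqueness of the limit point $\mathbf{v}=\wit\bx$) and supplies the finite-angular-derivative bound $\limsup_{z\nt\lambda}\|g_z\|_{\scr{H}(b)}^2<\infty$.

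With the full limit and this uniform bound in hand, the identification completing (iii) is a routine computation on the dense set of kernels: using \eqref{d-kernel} one checks that for every $w\in\D$ and $\be\in\C^n$,
\begin{align*}
\lim_{z\nt\lambda}\ipcn{\bx}{(k^b_w\be)(z)}=(\be,h(w))_{\C^n}=\langle k^b_w\be,\,h\rangle_{\scr{H}(b)},
\end{align*}
and the bound $\sup_z\|g_z\|_{\scr{H}(b)}<\infty$ lets me pass from this dense set to all of $\scr{H}(b)$. This yields $\ell_{\lambda,\bx}(f)=\langle f,h\rangle_{\scr{H}(b)}$ with $h=k^b_{\lambda,\bx}$ as in \eqref{e-brpk}, establishing the existence of the limit defining $\ell_{\lambda,\bx}$, its boundedness, and all of the ``Moreover'' assertions (including $b^*(\lambda,\bx)=\wit\bx$ and $k^b_{\lambda,\bx}\in\scr{H}(b)$) simultaneously; this last step also supplies the $\limsup$ bound (iv), closing the loop.
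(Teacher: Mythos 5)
Your overall architecture matches the paper's: the same bridge identity $\|k^b_z\bx\|^2_{\scr H(b)}=(\|\bx\|^2-\|b(z)^*\bx\|^2)/(1-|z|^2)$, the same cycle, the same weak-compactness proof of (i)$\Rightarrow$(ii), the same uniform-boundedness proof of (iii)$\Rightarrow$(iv), and the same density-of-kernels identification of $\ell_{\la,\bx}$. The genuine problems are both inside your crux step (ii)$\Rightarrow$(iii). First, the norm equality $\|\wit\bx\|=\|\bx\|$ \emph{cannot} be deduced from statement (ii), by any argument: take $n=1$, $\la=1$, $b(z)=\tfrac12-\e(1-z)$ with $0<\e<1/4$, $\bx=1$, $\wit\bx=2$. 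Then $(\bx-b(z)\wit\bx)/(1-z)\equiv 2\e\in H^2$, so (ii) holds, yet $|\wit\bx|\ne|\bx|$, and (i), (iii), (iv) all fail because $|b|\le\tfrac12+2\e$ makes $(1-|b(z)|^2)/(1-|z|^2)$ blow up. From $b(z)\wit\bx\to\bx$ you only get $\|\wit\bx\|\ge\|\bx\|$, and the bridge identity gives nothing here since under (ii) alone you have no bound on $\|k^b_z\bx\|$. The equality is genuinely extra information that must be imported from the Carath\'eodory condition: in the (i)$\Rightarrow$(ii) step, (i) forces $\|\bx\|^2-\|b(z_k)^*\bx\|^2\le C(1-|z_k|^2)\to 0$ along the chosen sequence, hence $\|\wit\bx\|=\|\bx\|$ for the $\wit\bx$ constructed there. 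This is exactly what the paper does (it records the equality inside the proof of (i)$\Rightarrow$(ii) and then uses it in (ii)$\Rightarrow$(iii), so that the cycle closes with the equality carried along); your version instead asserts a derivation that is false, and without the equality the step collapses.

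Second, even granting $\|\wit\bx\|=\|\bx\|$, your appeal to scalar Julia--Carath\'eodory theory is a non sequitur as written: a scalar Schur function with a \emph{unimodular non-tangential boundary value} need not have a finite angular derivative (e.g.\ $\f(z)=\exp(-(1-z)^{1/2})$ has non-tangential limit $1$ at $z=1$ while $(1-|\f(z)|)/(1-|z|)\to\infty$), so "unimodular boundary value at $\la$" cannot by itself supply the bound $\limsup\|k^b_z\bx\|^2<\infty$. What you must transfer to the scalar function $\f(z):=(b(z)\wit\bx,\bx)_{\C^n}/\|\bx\|^2$ is the $H^2$ hypothesis, not the boundary value: with the norm equality one checks $(1-\f(z))/(1-z\bar\la)=(h(z),\bx)_{\C^n}/\|\bx\|^2\in H^2$, and then the \emph{scalar case of the implication} (ii)$\Rightarrow$(iv) applied to $\f$, combined with $|\f(z)|\le\|b(z)^*\bx\|/\|\bx\|$, yields (iv); the full limit $b(z)^*\bx\to\wit\bx$ then follows algebraically from the identity $2\re\,(\bx-b(z)\wit\bx,\bx)_{\C^n}=\bigl(\|\bx\|^2-\|b(z)^*\bx\|^2\bigr)+\|\wit\bx-b(z)^*\bx\|^2$ (valid precisely when $\|\wit\bx\|=\|\bx\|$), with no Julia--Carath\'eodory input at all. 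Repaired this way your scalar-reduction route is legitimate and genuinely different from the paper's; the paper instead uses that same identity plus Cauchy--Schwarz to get $\|k^b_z\bx\|^2\le 2\frac{|1-z\bar\la|}{1-|z|^2}\bigl|\langle k^b_z\bx,h\rangle_{\scr H(b)}\bigr|$, hence a self-contained uniform bound on Stolz regions, avoiding any citation of the scalar theorem.
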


\begin{defn}
\label{d: bdry repr kern}
We call the function in \eqref{e-brpk} the boundary reproducing kernel of $\scr H(b)$ in the codirection $\bx$ at the point $\lambda\in\T$. 
\end{defn}

\begin{proof}
Let Statement \cond1 be satisfied. Notice, that  this condition just means 
\[
\liminf _{z \nt \la} \| k_z ^b \mbf{x} \|_{\scr H(b)} ^2 =: C < \infty. 
\]
Hence there is a sequence $z_k \nt \la$ so that $\| k _{z_k} ^b \mbf{x} \|_{\scr H(b)} ^2 \rightarrow C$, and we can also assume, without loss of generality (by passing to a subsequence, if necessary) that $k_{z_k}^b \mbf{x}$ converges weakly to some $h \in \scr{H} (b)$ by weak compactness. 
Similarly, we can also assume that $b(z_k)\bx$ converges to some vector $\wit\bx \in \C^n$. 
Notice that the Carath\'{e}odory condition implies that $\|\wit\bx\|=\|\bx \|$.

Then for any $\by\in\C^n$
\ba \ipcn{\mbf{y}}{h(z)} & = & \ip{k_{z} ^b \mbf{y}}{h}\ci{\scr H(b)} \nn \\
& = & \lim _{k \rightarrow \infty} \ip{k_{z} ^b \mbf{y}}{ k_{z_k} ^b \mbf{x}}\ci{\scr H(b)} \nn \\
& = & \lim_{k \rightarrow \infty} \ipcn{\mbf{y}}{\frac{I_n - b(z)b(z_k ) ^*}{1-z\ov{z}_k} \mbf{x}} \nn \\
& = & \ipcn{\mbf{y}}{\frac{\bx - b(z) \wit\bx}{1-z\bar\la} },  \nn 
\ea 
so 
\begin{align}
\label{e:h(z)}
h(z)= \frac{\bx -\wit\bx}{1-z\bar\la}.
\end{align}

Since $\scr{H}(b)\subset H^2(\C^n)$, 
we get that $h\in H^2(E)$, and the above formula for $h$ implies Statement \cond2. 

Assuming now that Statement \cond2 holds, let us prove \cond3. First let us show that 
\begin{align}
\label{e:lim b(z)* y}
\lim_{z\nt \lambda} b(z)^*\bx =\wit\bx. 
\end{align}
By the assumption \cond2 we know that $h$ defined by \eqref{e:h(z)} belongs to $H^2(\C^n)$. Every $H^2$ function is $O((1-|z|)^{-1/2})$. Since as $z\nt\lambda$ the quantities $1-|z|^2$ and $|1-\overline{\lambda}z|$ are comparable in the sense of two sided estimates, we conclude that for $z\nt\lambda$
\begin{align*}
\by - b(z)\wit\bx & = O((1-|z|)^{1/2}), \\
\intertext{or, equivalently}
b(z)^* \bx - \wit\bx & = O((1-|z|)^{1/2}).
\end{align*}
But the right hand side tends to $0$ as $z\nt \lambda$, so \eqref{e:lim b(z)* y} is proved. 

Statement \cond3 will follow immediately from the weak convergence $k_z^b \bx \to h$ in $\scr H(b)$ as $z\nt \lambda$.

To prove the weak convergence, notice first that identity \eqref{e:lim b(z)* y} implies that for $h(z)$ given by \eqref{e:h(z)} $k_z^b(w)\bx \to h(w)$,  as $z\nt \lambda$ for all $w\in\D$. So, for all $w\in\D $ and all $\by\in \C^n$ we have $ \langle k_{z}^b \bx, k_w^b \by \rangle\ci{\scr{H}(b)} \to \langle h, k_w^b \by \rangle\ci{\scr{H}(b)} $ as $z\nt\lambda$. 

Therefore, to prove the weak convergence, it is sufficient to show that $$\|k_z^b\bx\|\ci{\scr H(b)} \le C<\infty$$ for $z$ in the non-tangential approach region, because 
we already have the convergence on a dense set (linear combinations of $k_w^b \by$).

Since $\|b(z)\|\le 1$ and $\|\bx\|=\|\wit\bx\|$, we have
\ba 
0 & \le & 2 \re{\ipcn{\mbf{x}}{(\bx - b(z) \wit\bx ) }}\nn \\
& = & \ipcn{\mbf{x}}{(I_n - b(z) b(z)^* ) \mbf{x}} + \| \wit\bx -  b(z) ^* \mbf{x} \| ^2 \ci{\C ^n} \nn .
\ea 
It follows that: 
\ba \| k_z ^b \mbf{x} \| ^2 & = & \frac{1}{1-|z| ^2} \ipcn{\mbf{x}}{(I_n-b(z) b(z) ^*) \mbf{x}} \nn \\ 
& \leq & \frac{2}{1-|z| ^2} \left| \ipcn{\bx}{\by -b(z) \wit \bx} \right| \nn \\
& = & 2 \frac{|1-z\bar\la|}{1-|z|^2} \left| \ip{k_z ^b \mbf{x}}{h}\ci{\scr H(b)} \right| \nn \\
& \leq & 2 \frac{|1-z\bar\la|}{1-|z|^2} \| k_z ^b \mbf{x} \| \| h \|. \nn \ea 
And so $\| k _z ^b \mbf{x} \| $ is uniformly bounded in any non-tangential approach region for $\la$. 

Finally, let us prove \cond3$\implies$\cond4. 
Suppose that $\ell _{\la , \mbf{x}}$ is a bounded linear functional and consider a Stolz domain $\Ga _t (\la)$. Then for any $f \in \scr{H} (b)$, 
$$ \sup _{z \in \Ga _t (\la )}  \{ | \ip{k_z ^b \mbf{x}}{f}\ci{\scr H(b)} | \} < \infty. $$ 
By the Principle of Uniform Boundedness it then follows that 
$$ \sup _{z \in \Ga _t (\la ) } \| k _z ^b \mbf{x} \| < \infty
, $$ 
which is exactly Statement \cond4.
\end{proof}

\begin{prop}
\label{p:CAD subspace} For $b\in\scr S(n)$ and $\lambda\in\partial\D$, the set of all \emph{Carath\'{e}odory codirections} at $\lambda\in\T$, i.e.~the collection of all $\bx \in\C^n$ such that $b$ satisfies the Carath\'{e}odory condition at $\lambda$ is a subspace of $\C^n$.
\end{prop}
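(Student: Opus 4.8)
The plan is to reformulate the Carath\'{e}odory condition as a statement about the reproducing kernels $k_z^b\bx$, where the linear dependence on $\bx$ makes the subspace structure transparent. As computed at the start of the proof of Proposition \ref{p-equivalences}, the quantity appearing in \eqref{e:Caratheodory cond} is exactly a reproducing kernel norm,
\[
\| k_z^b \bx\|_{\scr H(b)}^2 = \frac{\|\bx\|^2 - \|b(z)^*\bx\|^2}{1-|z|^2},
\]
so $\bx$ is a Carath\'{e}odory codirection at $\lambda$ precisely when $\liminf_{z\nt\lambda}\|k_z^b\bx\|_{\scr H(b)}^2 < \infty$. Closure under scalar multiplication is then immediate from $k_z^b(c\bx) = c\, k_z^b\bx$, which gives $\|k_z^b(c\bx)\| = |c|\,\|k_z^b\bx\|$; and the zero vector trivially qualifies. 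So the whole content of the proposition is closure under addition.

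The step I expect to be the only real obstacle is that $\liminf$ is \emph{not} subadditive: one cannot simply combine the defining inequalities for $\bx$ and $\by$ by applying the triangle inequality along a single sequence $z_k\nt\lambda$, since the sequences realizing the two $\liminf$'s need not coincide. The resolution is to promote the $\liminf$ condition to a genuine uniform bound before adding. By the equivalence \cond1 $\iff$ \cond4 in Proposition \ref{p-equivalences}, if $\bx$ is a codirection then in fact $\limsup_{z\nt\lambda}\|k_z^b\bx\|_{\scr H(b)}^2 < \infty$; equivalently, as that proof shows via the uniform boundedness principle, $\sup_{z\in\Ga_t(\lambda)}\|k_z^b\bx\|_{\scr H(b)} < \infty$ for every Stolz region $\Ga_t(\lambda)$.

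Granting this, closure under addition reduces to a one-line estimate. I would fix a Stolz region $\Ga_t(\lambda)$, take bounds $\|k_z^b\bx\|\le M_\bx$ and $\|k_z^b\by\|\le M_\by$ valid on it, and use the linearity $k_z^b(\bx+\by)=k_z^b\bx+k_z^b\by$ together with the triangle inequality in $\scr H(b)$ to obtain
\[
\sup_{z\in\Ga_t(\lambda)} \|k_z^b(\bx+\by)\|_{\scr H(b)} \le M_\bx + M_\by < \infty .
\]
In particular the $\liminf$ in \eqref{e:Caratheodory cond} is finite for $\bx+\by$, so $\bx+\by$ is again a codirection. Combined with the (trivial) homogeneity and the membership of $0$, this shows that the set of Carath\'{e}odory codirections at $\lambda$ is closed under both operations and is therefore a subspace of $\C^n$.
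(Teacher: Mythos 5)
Your proposal is correct and takes essentially the same route as the paper: both reduce the problem to closure under addition, and both hinge on first upgrading the $\liminf$ in \eqref{e:Caratheodory cond} to the $\limsup$ bound of part \textup{(iv)} of Proposition \ref{p-equivalences} before combining the two codirections. The only cosmetic difference is that your triangle inequality $\|k_z^b(\bx+\by)\|_{\scr H(b)}\le\|k_z^b\bx\|_{\scr H(b)}+\|k_z^b\by\|_{\scr H(b)}$ is the paper's pointwise estimate $\|\bx\pm\by\|^2-\|b(z)^*(\bx\pm\by)\|^2\le 2\left(\|\bx\|^2-\|b(z)^*\bx\|^2+\|\by\|^2-\|b(z)^*\by\|^2\right)$ rewritten in reproducing-kernel form, the paper deriving it directly from $\|b(z)\|\le 1$ rather than from the norm identity for $k_z^b\bx$.
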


\begin{proof}
The  set of all Carath\'{e}odory codirections is invariant under multiplication by scalars, so it is enough to show that if $\bx$ and $\by$ are Carath\'{e}odory codirections, then so is $\bx + \by$. 
Since $\|b(z)\|\le 1$, we conclude
\begin{align*}
0& \le \|\bx\pm \by\|^2 - \|b(z)^*(\bx\pm \by)\|^2\\
& = \|\bx\|^2 +\|\by\|^2 -  \|b(z)^*\bx\|^2 -  \|b(z)^*\by\|^2 \pm 
2\re \left(  (\bx, \by) - ( b(z)^*\bx, b(z)^*\by )    \right). 
\end{align*}
Since this expression is non-negative for any choice of $\pm$ sign, we see that 
\begin{align*}
2 \left| \re \left(  (\bx, \by)\ci{\C^n} - ( b(z)^*\bx, b(z)^*\by )\ci{\C^n}    \right)          \right| 
\le \|\bx\|^2 +\|\by\|^2 -  \|b(z)^*\bx\|^2 -  \|b(z)^*\by\|^2 , 
\end{align*}
and therefore 
\begin{align*}
\|\bx\pm \by\|^2 - \|b(z)^*(\bx\pm \by)\|^2 \le 2 \left(  \|\bx\|^2 - \|b(z)^*\bx\|^2 +  \|\by\|^2 - \|b(z)^*\by\|^2  \right) .
\end{align*}
Since both $\bx$ and $\by$ are Carath\'{e}odory codirections, this inequality and part $\mr{(iv)}$ of Proposition \ref{p-equivalences} imply that $\bx\pm \by$ are also Carath\'{e}odory codirections. 
\end{proof}

%%%%%%%%%%%%%%%%%%%%%%%%%%%%%%%%%%%%%%%%%%%%%%%%%%
\section{Carath\'{e}odory angular derivative (CAD)}\label{s-CAD2}
%%%%%%%%%%%%%%%%%%%%%%%%%%%%%%%%%%%%%%%%%%%%%%%%%%%
\begin{defn}\label{d-CAD}
Let $E$ be a subspace of $\C^n$ and denote by $P\ci{E}:\C^n\to E$ the corresponding orthogonal projection. A function $b \in \scr{S} (n)$ is said to have a 
Carath\'{e}odory angular derivative%
\footnote{Or simply \emph{angular derivative}. }
(CAD) at $\la \in \T$ \emph{on the subspace}  $E$, if for every $\be\in E$ 
\begin{align}
&\lim_{z\nt \lambda}b(z) \be = \be, \label{e-limite}\intertext{and}
&\CAD_E b (\lambda):=    
\lim_{z\nt \lambda}P\ci{E} b'(z)P\ci{E}\quad\text{ exists.}\label{e-limit derivative}
\end{align}
\end{defn}

\begin{prop}
A function $b\in \mathscr S (n)$ has a Carath\'{e}odory angular derivative on a  subspace  $E$ if and only if 
\begin{align}
\lim_{z\nt \lambda}P\ci{E}\frac{b(z)-I}{z-\lambda}P\ci{E}\quad\text{ exists.}\label{e-limitoperator}
\end{align}
Moreover, in this case the limits \eqref{e-limitoperator} and \eqref{e-limit derivative} coincide. 
\end{prop}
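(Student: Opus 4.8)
The plan is to reduce the matrix identity to a scalar fact about bounded analytic functions by compressing everything to $E$. Since $E$ is finite dimensional, convergence of $P\ci{E}b'(z)P\ci{E}$ and of $P\ci{E}\frac{b(z)-I}{z-\lambda}P\ci{E}$ as $z\nt\lambda$ is equivalent to convergence of the scalar quantities obtained by pairing against $\be,\bff$ in a fixed orthonormal basis of $E$. Setting $g_{\be,\bff}(z):=(b(z)\be,\bff)\ci{\C^n}$, one has $(P\ci{E}b'(z)P\ci{E}\be,\bff)\ci{\C^n}=g_{\be,\bff}'(z)$ and $(P\ci{E}\frac{b(z)-I}{z-\lambda}P\ci{E}\be,\bff)\ci{\C^n}=\frac{g_{\be,\bff}(z)-(\be,\bff)\ci{\C^n}}{z-\lambda}$, and each $g_{\be,\bff}$ is bounded analytic since $\|b(z)\|\le 1$. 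Thus everything follows from the scalar equivalence: for bounded analytic $g$ on $\D$ and $\lambda\in\T$, the nontangential limits of $g$ and of $g'$ exist if and only if $\lim_{z\nt\lambda}\frac{g(z)-g(\lambda)}{z-\lambda}$ exists, and in that case this difference-quotient limit equals $\lim_{z\nt\lambda}g'(z)$.

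For the forward implication of the scalar equivalence I would integrate the derivative along the segment $[\lambda,z]$. A short convexity computation shows that if $z\in\Gamma_t(\lambda)$ then $w=\lambda+s(z-\lambda)$ satisfies $|w-\lambda|=s|z-\lambda|$ and $1-|w|\ge s(1-|z|)$, so the whole segment lies in $\Gamma_t(\lambda)$. Hence $\frac{g(z)-g(\lambda)}{z-\lambda}=\int_0^1 g'(\lambda+s(z-\lambda))\,ds$, obtained from the fundamental theorem of calculus on $[\eps,1]$ and letting $\eps\to0$, using $g(\lambda+\eps(z-\lambda))\to g(\lambda)$. Since $g'$ has a finite nontangential limit $L$, it is bounded on a truncated Stolz region and tends to $L$ along each such segment, so dominated convergence gives the difference-quotient limit $L$.

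For the backward implication, set $h(z):=\frac{g(z)-g(\lambda)}{z-\lambda}$, analytic on $\D$; from $g(z)=g(\lambda)+(z-\lambda)h(z)$ we get $g'(z)=h(z)+(z-\lambda)h'(z)$, so with $h\to L$ nontangentially it suffices to prove $(z-\lambda)h'(z)\to0$. Subtracting the constant $L$ (which does not change $h'$) I may assume $L=0$. The key point is that nontangential convergence $h\to0$, and not merely boundedness, upgrades through a Cauchy estimate on a disc $B(z,\rho(1-|z|))$ lying in a slightly larger truncated Stolz region to $|h'(z)|\le \eps(z)/(\rho(1-|z|))$ with $\eps(z)\to0$; since $|z-\lambda|\le t(1-|z|)$ in $\Gamma_t(\lambda)$, this yields $(z-\lambda)h'(z)\to0$. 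This is the only genuinely non-formal step, and I expect stating it cleanly to be the main obstacle.

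Assembling the two scalar directions: if $b$ has a CAD on $E$, then $\lim_{z\nt\lambda}b(z)\be=\be$ gives $g_{\be,\bff}(z)\to(\be,\bff)\ci{\C^n}$ while convergence of $P\ci{E}b'P\ci{E}$ gives convergence of $g_{\be,\bff}'$, so the forward scalar fact produces the limit of the compressed difference quotient, equal to $\CAD_E b(\lambda)$. Conversely, assume $P\ci{E}\frac{b(z)-I}{z-\lambda}P\ci{E}$ converges; the delicate point is to recover the \emph{full} limit $b(z)\be\to\be$ from only the $E$-block. For $\be\in E$, boundedness of $\frac{g_{\be,\be}(z)-\|\be\|^2}{z-\lambda}$ forces $g_{\be,\be}(z)=(b(z)\be,\be)\ci{\C^n}\to\|\be\|^2$, whence the contraction bound $\|b(z)\be-\be\|^2\le 2\|\be\|^2-2\re g_{\be,\be}(z)\to0$ gives $\lim_{z\nt\lambda}b(z)\be=\be$. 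With \eqref{e-limite} in hand, the backward scalar fact upgrades the compressed difference quotient to $P\ci{E}b'P\ci{E}$, giving both \eqref{e-limit derivative} and the asserted equality of \eqref{e-limitoperator} with \eqref{e-limit derivative}.
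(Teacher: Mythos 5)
Your proof is correct and structurally parallel to the paper's: your forward implication integrates the derivative over the segment $[\lambda,z]$ (checking, as the paper does implicitly, that this segment stays inside the Stolz region) and concludes by dominated convergence; your backward implication rests on a Cauchy estimate over a disc of radius $\rho(1-|z|)$ contained in an enlarged Stolz region $\Gamma_{t'}(\lambda)$, which is precisely the mechanism behind the paper's contour integral over the circles $T_z\subset \Gamma_{t'}(\lambda)$. The preliminary scalarization via $g_{\be,\bff}(z)=(b(z)\be,\bff)\ci{\C^n}$ is only a cosmetic difference, legitimate because $E$ is finite dimensional, so entrywise convergence against a basis of $E$ is equivalent to convergence of the compressions.

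The one place where you genuinely add something is the recovery of \eqref{e-limite} in the backward direction. The definition of the CAD requires convergence of the full vector $b(z)\be\to\be$ for $\be\in E$, whereas the hypothesis \eqref{e-limitoperator} only controls the compression $P\ci{E}b(z)P\ci{E}$; the paper's proof establishes the limit of $f'(z)=P\ci{E}b'(z)\be$ but is silent on \eqref{e-limite}. Your argument --- boundedness of the compressed difference quotient forces $(b(z)\be,\be)\ci{\C^n}\to\|\be\|^2$, and then the contraction estimate $\|b(z)\be-\be\|^2\le 2\|\be\|^2-2\re\,(b(z)\be,\be)\ci{\C^n}$ upgrades this to $\|b(z)\be-\be\|\to 0$ --- is exactly what is needed to close that step, and it is correct. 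This is a worthwhile supplement to the paper's argument rather than a deviation from it.
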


\begin{proof}
The proof goes exactly as in the scalar-valued case, and is only presented for the convenience of the reader. 

Suppose $f$ has a Carath\'{e}odory angular derivative at $\lambda$ on a subspace $E$. For $w\in\D$
\[
P\ci E (b(z) - b(w)) P\ci E = \int_w^z b'(\xi)\dd\xi . 
\]
Now take $z$ in the non-tangential approach region (Stolz domain $\Gamma_t(\lambda)$), and let $w\to \lambda$ along the line connecting $z$ and $\lambda\in\T$. Then  using \eqref{e-limite} and, say, the dominated convergence theorem we get that 
\begin{align*}
P\ci E (b(z) - b(\lambda)) P\ci E = \int_\lambda^z  P\ci E b'(\xi) P\ci E \dd\xi;
\end{align*}
for simplicity we can assume here that the integral is taken over the interval connecting $z$ and $\lambda$. Then trivially using \eqref{e-limit derivative} we get
\begin{align*}
P\ci E (b(z) - b(\lambda)) P\ci E - (z-\lambda)\CAD_E b(\lambda)  & =  \int_\lambda^z 
\left(  P\ci E b'(\xi) P\ci E -\CAD_E b(\lambda) \right) \dd\xi \\
&= o(1) (z-\lambda)  . 
\end{align*}
Dividing by $z-\lambda$ and taking the limit as $z\to\lambda$ we see that the limit \eqref{e-limitoperator} exists and coincides with $\CAD_E b(\lambda)$. 

Vice versa, assume now that the limit \eqref{e-limitoperator} exists. Let us call this limit  $A$. Take $\be\in E\subset \C^n$ and denote $f(z) := P\ci E b(z)\be$.  Fix a Stolz region $\Gamma_t(\lambda)$, and a bigger region $\Gamma_{t'}(\lambda)$, $t'>t$. For each $z\in\Gamma_t(\lambda)$ let $T_z$ be the circle centered at $z$ of radius $\delta\cdot (1-|z|)$, where $\delta>0$ is sufficiently small so $T_z\subset \Gamma_{t'}(\lambda)$ for all $z\in \Gamma_t(\lambda)$. 

Then for $z\in\Gamma_t(\lambda)$
\begin{align*}
f'(z)   & =\frac{1}{2\pi i } \int_{T_z} \frac{f(\xi)}{(\xi-z)^2} \dd\xi   \\
& = \frac{1}{2\pi i } \int_{T_z} \frac{f(\xi) - \be}{(\xi-z)^2} \dd\xi   \\
& = \frac{1}{2\pi i } \int_{T_z} \left( \frac{f(\xi) - \be}{\xi-z} - A\be \right) \frac{1}{\xi-z} \dd\xi 
+ \frac{1}{2\pi i } \int_{T_z} \frac{A\be}{\xi-z} \dd\xi . 
\end{align*}
Now, let $z\nt \lambda$ (with $z\in \Gamma_t(\lambda)$). By the Cauchy theorem the second integral in the last line is always $A\be$. And since the limit in \eqref{e-limitoperator} is $A$, we see that as  the first integral tends to $0$. 
\end{proof}

In the scalar case ($n=1$) Definition \ref{d-CAD} covers only special case when the non-tangential boundary value  $b(\lambda) = \lim_{z\nt\lambda} b(z) =1$. The definition below covers the general case. 

\begin{defn}
Let $E$ be a subspace of $\C^n$ and $\alpha\in\cU(n) $. 
We say that a function $b\in\scr S(n)$ has a Carath\'{e}odory angular  derivative in \emph{codirection} $(E, \balpha)$ at $\lambda\in\partial\D$ if $b\balpha^*$ has a Carath\'{e}odory angular  derivative on a subspace $E$.
\end{defn}

\begin{thm}
\label{t:CAD}
If $\bmu^\alpha\{\la\}\neq 0$ and $\ran{\bmu^\alpha\{\la\}}=E$ for some $\lambda \in \T$, then $b\alpha^*$ has a 
Carath\'{e}odory angular derivative  $\CAD\ci E (b\alpha^*)(\lambda)$ at $\lambda$ on the subspace $E$, and its Moore--Penrose inverse equals $\lambda\bmu^\alpha\{\la\}$.
\end{thm}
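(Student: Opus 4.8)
The plan is to prove the theorem by connecting the hypothesis $\bmu^\alpha\{\lambda\}\neq 0$ to the Nevanlinna formula and the boundary reproducing kernel theory from Section~\ref{s-CAD}. First I would set $\wit b := b\alpha^*$, so that the de~Branges--Rovnyak kernel $k^{\wit b} = k^b$ and the condition \eqref{e-Herglotz} is expressed in terms of $(I_n - \wit b(z))^{-1}$. By Theorem~\ref{t-Nevanlinna}, we have $\bmu^\alpha\{\lambda\} = \lim_{z\nt\lambda}(1-z\bar\lambda)(I_n - \wit b(z))^{-1}$. The strategy is to show that a nonzero point mass forces the Carath\'{e}odory condition to hold in every codirection $\bx\in E = \ran{\bmu^\alpha\{\lambda\}}$, so that Proposition~\ref{p-equivalences} provides the boundary reproducing kernel $k^{\wit b}_{\lambda,\bx}$ and, crucially, the existence of $\lim_{z\nt\lambda}\wit b(z)^*\bx = \bx$ for such $\bx$.

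The key steps, in order, are as follows. First I would show that $E = \ran{\bmu^\alpha\{\lambda\}}$ is contained in the space of Carath\'{e}odory codirections at $\lambda$ for $\wit b$, and in fact that $\lim_{z\nt\lambda}\wit b(z)\be = \be$ for $\be\in E$; this is where Proposition~\ref{p-sing} and the directional carrier analysis feed in, since the point mass is part of the singular (atomic) part $\bmu^\alpha\ti a$. Combined with Proposition~\ref{p-equivalences}, this yields that $\wit b$ satisfies the Carath\'{e}odory condition in codirection $\be$ at $\lambda$ for every $\be\in E$, so the limit $\lim_{z\nt\lambda}\wit b(z)\be=\be$ holds, verifying \eqref{e-limite}. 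Next I would establish the existence of the derivative limit \eqref{e-limit derivative}, i.e.~that $\CAD_E(\wit b)(\lambda) = \lim_{z\nt\lambda}P\ci E \wit b'(z) P\ci E$ exists. For this I would use the equivalent formulation \eqref{e-limitoperator} and relate $P\ci E \frac{\wit b(z)-I}{z-\lambda} P\ci E$ to the Nevanlinna limit: the resolvent identity should let me express $(I_n - \wit b(z))^{-1}$, restricted to $E$, in terms of a quantity whose leading singular behavior as $z\nt\lambda$ is governed precisely by $\bmu^\alpha\{\lambda\}$.

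The final and most delicate step is the identification of the Moore--Penrose inverse. The relation I expect to derive is that, on the subspace $E$, the Nevanlinna limit and the CAD are reciprocal up to the factor $\lambda$: writing $M := \bmu^\alpha\{\lambda\}$ with $\ran M = E$, and $D := \CAD_E(\wit b)(\lambda)$, I anticipate an identity of the form $P\ci E \lim_{z\nt\lambda}(1-z\bar\lambda)(I_n-\wit b(z))^{-1} P\ci E = \lambda \cdot (\text{something invertible on } E)^{-1}$ matched against $P\ci E D P\ci E$. Concretely, from $\wit b(z)\be \to \be$ and the first-order expansion $\wit b(z) = I + (z-\lambda)D + o(z-\lambda)$ on $E$, I would compute $(I_n-\wit b(z))^{-1}$ restricted to $E$ and take the Nevanlinna limit, obtaining $M = \lambda D^{-1}$ as operators on $E$ (interpreting $D^{-1}$ as the inverse of the restriction $D|_E : E\to E$). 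Since $\ran M = E$ and $M$ is the limit of positive-type expressions, $M$ is invertible on $E$, which forces $D|_E$ to be invertible on $E$ as well; the Moore--Penrose inverse of $\CAD_E(\wit b)(\lambda)$, which vanishes on $E^\perp$ by construction of $P\ci E D P\ci E$, then equals $M/\lambda = \bar\lambda M = \lambda M$ after accounting for $|\lambda|=1$, giving the stated $\lambda\bmu^\alpha\{\lambda\}$.

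The main obstacle I foresee is the rigorous passage from the non-tangential Nevanlinna limit to the first-order expansion of $\wit b$ on $E$: one must justify differentiating the boundary behavior and control the cross terms between $E$ and $E^\perp$ in $(I_n-\wit b(z))^{-1}$, since $\wit b(z)$ need not preserve $E$ for $z\in\D$ and the off-diagonal blocks could contribute to the singular limit. Handling this likely requires the full strength of Proposition~\ref{p-equivalences}(iv), the boundary reproducing kernel $k^{\wit b}_{\lambda,\bx}$, and a careful block-matrix estimate showing that the $E^\perp$-directions do not blow up at the rate $(1-z\bar\lambda)^{-1}$, so that only the $E$-block survives in the Nevanlinna limit and cleanly matches $\lambda D^{-1}$.
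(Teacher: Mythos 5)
Your overall shape — that the Nevanlinna limit and the CAD should be reciprocal on $E$, with a Moore--Penrose inverse because everything is compressed to $\ran{\bmu^\alpha\{\la\}}$ — is correct, but the two steps that would make this a proof are exactly the ones you leave open, and neither can be closed with the tools you cite. The central gap is the passage from Theorem \ref{t-Nevanlinna} to the existence of the limit \eqref{e-limitoperator}. Write $B(z)=(I_n-b(z)\alpha^*)^{-1}$ in block form with respect to $E\oplus E^\perp$; then by the Schur complement formula the quantity you need is $\bigl[(1-z\bar\la)\bigl(B_{11}-B_{12}B_{22}^{-1}B_{21}\bigr)\bigr]^{-1}$, so besides $(1-z\bar\la)B_{11}\to \bmu^\alpha\{\la\}|\ci E$ (which Theorem \ref{t-Nevanlinna} gives) you must prove that the cross term $(1-z\bar\la)B_{12}B_{22}^{-1}B_{21}$ tends to $0$. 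This is the heart of the theorem, not a technicality: the example in Section \ref{s:CAD counterexample} shows that $B_{12},B_{21}$ can genuinely blow up (like $z^{\beta-1}$ there), and the elementary estimates available to you (Cauchy--Schwarz for the matrix Cauchy transform, $\|B_{22}^{-1}\|\le 2$ coming from $\re B\ge I_n/2$) bound the factors separately and never yield the required $o(1)$ for the product. Your fallback, the ``first-order expansion $\wit b(z)=I+(z-\la)D+o(z-\la)$ on $E$,'' is circular: that expansion \emph{is} the CAD whose existence is to be proven. The paper does not invert the Nevanlinna limit at all; it uses the unitarity of the extended Clark operator $\wt\Phi^*$ from \cite{LTFinite} to show that the Taylor series of the scalar functions $z\mapsto(\f(z)\bmu\{\la\}\be',\bmu\{\la\}\be)\ci{\C^n}$, with $\f(z)=(1-\bar\la z)^{-1}(I_n-b(z))$, converge \emph{at the boundary point} $z=\la$ to $(\bmu\{\la\}\be',\be)\ci{\C^n}$; Abel summation then gives radial convergence of $P\ci E\f P\ci E$ to $\bmu\{\la\}^{[-1]}$, and Proposition \ref{p:non-tangential} (a normal-families argument, applicable because Herglotz positivity confines the values of $((I_n-b(z))\be,\be)\ci{\C^n}/(1-\bar\la z)$ to a sector) upgrades radial to non-tangential convergence. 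Nothing playing the role of this mechanism appears in your outline.

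Your first step has the same defect in milder form. Proposition \ref{p-sing} gives only the qualitative limit $\lim_{z\nt\la}b(z)^*\bx=\alpha^*\bx$ for $\bx\in\ran{\bmu^\alpha\{\la\}}$ — which, together with $\|b(z)\|\le 1$, does legitimately yield \eqref{e-limite} — but it carries no rate information, whereas the Carath\'{e}odory condition \eqref{e:Caratheodory cond} is precisely the rate statement $\|\bx\|^2-\|b(z)^*\bx\|^2=O(1-|z|^2)$. In the paper, the inclusion $\ran{\bmu^\alpha\{\la\}}\subset\mbf{E}_\alpha(\la)$ is Theorem \ref{t-E}, and it is deduced \emph{from} Theorem \ref{t:CAD} via Theorem \ref{t-CADimpliesCond}; so deriving it from Propositions \ref{p-sing} and \ref{p-equivalences} alone, as you propose, is not available, and invoking Theorem \ref{t-E} would be circular. (Were that inclusion known independently, Theorem \ref{t-P} would indeed hand you the existence of the CAD on $E$ — but proving it is as deep as the theorem itself.) Finally, a small but real slip in the bookkeeping: the correct relation is $M=\bar\la D^{-1}$ on $E$, hence $D^{[-1]}=\la M$; your chain ``$M/\la=\bar\la M=\la M$'' rests on $\bar\la M=\la M$, which is false unless $\la=\pm1$.
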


\begin{remark}
\label{r:b(0)=0}
The functions $b(z)$ and $\wt b(z):=\bar\lambda z b(z)$ have the same boundary behavior at $\lambda\in \T$, meaning that the Aleksandrov--Clark measures for $b$ and $\wt b$ have the same point mass at $\lambda$, and the limits in the definition of the %restricted 
CAD for both functions also coincide. 

Therefore, replacing $b$ by $\wt b$ we can assume without loss of generality that  $b(0)=0$.
\end{remark}

\subsection{Some known facts about the Clark operator}
\label{s:Clark}
To prove the theorem, let us first recall some results about the Clark model  obtained in \cite{LTFinite}. Note first that for the case $b(0)=0$ the matrix-valued  measure $B^*B\dd\mu$  considered in \cite{LTFinite} coincides with the Aleksandrov--Clark measures treated in this paper (in the case $b(0)\ne 0$ they differ by normalization). 

In \cite{LTFinite} the Clark operator $\Phi$ from the model space $\cK_b$ and the weighted $L^2$ space $L^2(\bmu)$, where matrix-valued measure $\bmu$ is the Aleksandrov--Clark measure for $b$,  was constructed.  

Let us recall that for a matrix valued measure $\bmu =W \mu$, the norm in the weighted space $L^2(\bmu)$ is defined as
\begin{align}
\label{e:L^2(bmu)}
\| f\|\ci{L^2(\bmu)}^2= \int \left( \dd\bmu (\xi) f(\xi), f(\xi)  \right)\ci{\C^n} =    
\int \left( W (\xi) f(\xi), f(\xi)  \right)\ci{\C^n} \dd\mu(\xi)
\end{align}
and the space $L^2(\bmu)$ consists of the Borel measurable vector-valued functions $f$ with $\| f\|\ci{L^2(\bmu)}<\infty$.
We should mention here, that while for general operator-valued measures, the definition of the weighted space $L^2(\bmu)$ is quite involved, a matrix-valued measure $\bmu$ can always be represented as $\dd\bmu = W\dd\mu$ with a scalar measure $\mu$ (for example with $\mu=\tr\bmu$), and the integration is reduced to the standard integration with respect to the scalar measure $\mu$, see \eqref{e:L^2(bmu)}.

Let us also recall, that the  Sz.-Nagy--Foia\c s model space $\cK_b$ is defined as the set of vector-valued functions
\begin{align*}
\cK_b:=
\left( 
\begin{array}{c}
H^2(\C^n)  \\
\clos \Delta L^2(\C^n)
\end{array}
\right)
\ominus
\left( 
\begin{array}{c}
b  \\
\Delta
\end{array}
\right) H^2(\C^n),
\end{align*}
where $\Delta (\xi):= (I_n - b(\xi)^*b(\xi))^{1/2}$, $\xi\in \T$; here by $b(\xi)$ we mean the non-tangential boundary  values of $b(z)$ as $z\nt \xi$, $z\in\D$. 

In \cite[Theorem 8.7]{LTFinite} the following description of the adjoint Clark operators $\Phi^*$ was obtained (we present only formula for the case $b(0)=0$ here)
\begin{align}
\label{Phi*}
\Phi^* f = \left(   \begin{array}{c}
0  \\
\Delta (\cC\bmu)_+ 
\end{array}   \right) f
+
\left(   \begin{array}{c}
(\cC \bmu)_+^{-1}  \\
\Delta 
\end{array}   \right) (\cC[\bmu f])_+ \,.
\end{align}
Here, as was defined in \eqref{d-Cauchy}, $\cC\bmu$ is the Cauchy transform in the unit disc $\D$ of the measure $\bmu$. In analogy, $\cC[\bmu f]$ is the Cauchy transform of the vector-valued measure $\bmu f$. The subscript ``$+$''  means that we are considering non-tangential boundary values from $\D$ to the unit circle $\T$.

Theorem 8.7 stated in \cite{LTFinite} using a different notation,  so for the convenience of the reader we provide the translation. 

The function $\theta=\theta\ci\Gamma$ corresponds to the function $b$ in the present paper. The matrix $\Gamma$ is defined as $\Gamma=-\theta(0)$, so in our case $\Gamma=0$. Symbols $D\ci\Gamma$ and $D\ci{\Gamma^*}$ denote the \emph{defect operators}, 
\[
D\ci\Gamma = (I-\Gamma^*\Gamma)^{1/2}, \qquad D\ci{\Gamma^*} = (I-\Gamma\Gamma^*)^{1/2}, 
\]
so in our case $D\ci\Gamma = D\ci{\Gamma^*} = I_n $. 

The matrix-valued measure $B^*B\mu$, where $\mu$ is a scalar measure and $B$ is a matrix-valued function corresponds to our Aleksandrov--Clark measure $\bmu$. To see that one can compare  \cite[Equation (4.6)]{LTFinite} ($\theta_0$ there means that $\theta(0)=0$)   with \eqref{e-Herglotz}, noticing that if $b(0)=0$ then $H(0)=I_n$, so $\im H(0) =0$.  

Finally, $\Delta\ci\Gamma$ in \cite{LTFinite} corresponds to our $\Delta$. 

We do not need all properties of $\Phi$ (or of $\Phi^*$) that were presented in \cite{LTFinite}; we only need to know that formula \eqref{Phi*} defines a unitary operator $\Phi^*: L^2(\bmu) \to \cK_b$.

\subsection{Proof of Theorem \ref{t:CAD} }
As discussed in Remark \ref{r:b(0)=0}, without loss of generality, we can assume  $\alpha=I_n.$ 

The operator $\Phi^* : L^2(\bmu) \to \cK_b$  is a unitary operator. However, in its definition \eqref{Phi*} the model space $\cK_b$ is not involved. So, if we increase the target space from $\cK_b $ to $L^2(\C^n) \oplus L^2(\C^n)$, we can treat the operator $\Phi^*$, defined by the formula \eqref{Phi*} as an operator from  $L^2(\bmu)$ to $ L^2(\C^n) \oplus L^2(\C^n)$.  To avoid confusion, let us denote this operator with extended target space as $\wt\Phi^*$, and let $\wt\Phi: L^2(\C^n) \oplus L^2(\C^n) \to L^2(\bmu)$ be its adjoint. 

One can easily see that $\wt\Phi^*: L^2(\bmu) \to  L^2(\C^n) \oplus L^2(\C^n)$ is an isometry, and that $\ran \wt\Phi^* = \cK_b$. 

Define $\wt\Phi_m:= \wt\Phi - M_\xi^m\wt\Phi M_{\bar z}^m$. Here we use $\xi, z\in \T$ for independent variables in $L^2(\bmu)$ and $ L^2(\C^n) \oplus L^2(\C^n)$ respectively, so
\[
[M_\xi f](\xi) := \xi f(\xi), \qquad f\in L^2(\bmu), 
\]
and 
\[
( M_{\bar z} g)(z) = \bar z g(z), \qquad g\in L^2(\C^n) \oplus L^2(\C^n). 
\]
Take $f = \1\ci{\{\lambda\}}\mbf{e}$, $\be\in\ran{\bmu\{\la\}}$ and $g\in L^2(\C^n) \oplus L^2(\C^n)$. Since trivially $M_\xi^* f = M_{\bar\xi} f = \bar\lambda f$, we see that 
\[
(M_{\xi}^m \wt\Phi M_{\bar z}^m g, f)\ci{L^2(\bmu)} =  ( M_{\bar z}^m g, \wt\Phi^* M_{\bar\xi}^m f)\ci{L^2} = 
\lambda^m ( M_{\bar z}^m g, \wt\Phi^*  f)\ci{L^2} \longrightarrow 0,
\]
as $m\to\infty$  (because $M_{\bar z}^m g \to 0 $  weakly in $L^2(\C^n) \oplus L^2(\C^n)$).

We then can  conclude that (for $f = \1\ci{\{\lambda\}}\mbf{e}$, $\be\in\ran{\bmu\{\la\}}$ and $g\in L^2(\C^n) \oplus L^2(\C^n)$) we have the limiting behavior
\[
(\wt\Phi_m g, f)\ci{L^2(\bmu)}
\longrightarrow 
(\wt\Phi g, f)\ci{L^2(\bmu)}
\qquad\text{as }m\to \infty.
\]
If we now take $g=\wt\Phi^*( \1\ci{\{\lambda\}}  \be')$, $\be'\in \ran \bmu\{\la\}$, then using the fact that $\wt\Phi^*$ is an isometry, we get that
\[
(\wt\Phi g, f)\ci{L^2(\bmu)}
=
\left(\1\ci{\{\lambda\}} \be', \1\ci{\{\lambda\}}\be\right)\ci{L^2(\bmu)}
=
(\bmu\{\la\} \be', \be)\ci{\C^n}.
\]

Using formula \eqref{Phi*} we get that
\begin{align*}
\wt\Phi_m^* f(z) = \Phi^* f (z) - M_z^m \Phi^*( M_{\bar\xi}^m f)  = C_1(z) \int_\T \frac{1-(z\bar\xi)^m}{ 1-z\bar\xi} [\bmu(\dd\xi)] f(\xi);
\end{align*}
where 
\[
C_1 = \left(   \begin{array}{c}
(\cC \bmu)_+^{-1}  \\
\Delta 
\end{array}   \right)\,.
\]
Therefore, for $g\in L^2(\C^n) \oplus L^2(\C^n)$
\begin{align*}
\wt \Phi_m g(\xi) = \int_\T \frac{1-(\bar z\xi)^m}{ 1-\bar z\xi} C_1(z)^* g(z) \dd m(z).
\end{align*}
It was shown in \cite[Lemma 8.6]{LTFinite} that $(\cC\bmu)(z) = (I_n - b(z))^{-1}$ for all $z\in\D$ and a.e.~on $\T$; this fact can also be easily obtained from \eqref{Herglotz fn}. Therefore, if $g=g_1\oplus g_2 \in \cK_b$ (note that $g_1\in H^2(\C^n)$) 
\[
C_1^* g = g_1 + (b^* g_1 + \Delta g_2)\qquad \text{a.e.~on }\T. 
\]
By the definition of $\cK_b$ we have $h:= (b^* g_1 + \Delta g_2) \in H^2_-(\C^n)$, so
\[
\int_\T \frac{1-(\bar z\xi)^m}{ 1-\bar z\xi} h(z) \dd m(z) =0
\qquad \text{for all }m\in \N_0. 
\]
For a function $f\in H^2$ (possibly vector-valued) denote by $P_m f$ the partial sum of its Taylor (Fourier) series, i.e.~$P_m f(z) = \sum_{k=0}^m \hat f (k) z^k$.
With this notation we have
\begin{align}
\label{Phi_m}
\wt \Phi_m g(\xi) = \int_\T \frac{1-(\bar z\xi)^m}{ 1-\bar z\xi}  g_1(z) \dd m(z) = \sum_{k=0}^{m-1} \widehat g_1(k) \xi^k = : P_{m-1} g_1(\xi).
\end{align}

For $g= \Phi^* \1_{\{\lambda\}} \be' = \wt\Phi^* \1_{\{\lambda\}} \be' $ we have $g_1 (z) =  (1-\bar\lambda z)^{-1} (I_n-b(z) )  \bmu\{\la\}\be'$. So for this $g$, it follows from \eqref{Phi_m} that 
\begin{align}
\label{Phi_m 01}
\wt \Phi_m g (\xi) = \sum_{k=0}^{m-1}\widehat \f (k)\bmu\{\la\} \be' \xi^k  = P_{m-1}\f,\qquad\f(z) :=  (1-\bar\lambda z)^{-1} (I_n-b(z) ).
\end{align}

As discussed above, for $f = \1\ci{\{\lambda\}}\mbf{e}$, $\be\in\ran{\bmu\{\la\}}$ and $g
= \wt\Phi^* \1\ci{\{\lambda\}} \be' $, we have 
\begin{align*}
\lim_{m\to\infty} (\wt \Phi_m g, f)\ci{L^2(\bmu)} = (\wt \Phi g, f)\ci{L^2(\bmu)} = (\bmu\{\la\} \be',\be).
\end{align*}
Combining this with \eqref{Phi_m 01} we see that 
\begin{align*}
\sum_{k=0}^\infty (\xi^k \widehat \f (k) \bmu\{\la\}\be' ,\1\ci{\{\lambda\}} \be)\ci{L^2(\bmu)} 
= \sum_{k=0}^\infty \lambda^k(\widehat \f (k) \bmu\{\la\}\be' ,\bmu\{\la\} \be)\ci{\C^n}
= (\bmu\{\la\} \be', \be)\ci{\C^n}.    
\end{align*}
In other words, the Taylor series of the function $(\f (\fdot)\bmu\{\la\}\be', \bmu\{\la\}\be)\ci{\C^n} $ converges (at $z=\lambda$) to $(\bmu\{\la\} \be',\be)\ci{\C^n}$. Denoting $E=\ran \bmu\{\la\}$ we can see that this is equivalent to the convergence of the Taylor series of the function $P\ci E \f P\ci E$ to the Moore--Penrose inverse $\bmu\{\la\}^{[-1]}$ of $\bmu\{\la\}$.

The convergence of the Taylor series at $\lambda$ implies the radial convergence 
\begin{align*}
P\ci E \f(r\lambda) P\ci E \longrightarrow \left(\bmu\{\la\}\right)^{[-1]} \qquad \text{as } r\rightarrow 1^- .
\end{align*}

To show the non-tangential convergence we need the following simple and well-known fact, which can be proved using a standard normal families argument. 

\begin{prop}
\label{p:non-tangential}
Let $f$ be a bounded analytic function in a sector $S_\gamma := \{z\in\D: z\ne 0, |\arg z |<\gamma \}$, $0<\gamma\le\pi$. 

If there exists a ``radial limit''
\begin{align*}
\lim_{x\to 0+} f(x) = a, 
\end{align*}
then for any $0<\beta<\gamma$ the ``non-tangential'' limit as $z\to 0$ in $S_\beta$ exists and coincides with $a$, i.e.
\begin{align*}
\lim_{z\to 0: z\in S_\beta} f(z) =a. 
\end{align*}
\end{prop}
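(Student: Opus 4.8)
The plan is to prove Proposition \ref{p:non-tangential}, the classical Lindel\"of-type statement that for a bounded analytic function on a sector, existence of a radial limit forces the non-tangential limit to exist and agree.

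\textbf{Setup via rescaling.} I would exploit the self-similarity of the sector $S_\gamma = \{z \in \D : z \ne 0, |\arg z| < \gamma\}$ under dilations. Fix $0 < \beta < \gamma$ and suppose, for contradiction, that the non-tangential limit fails. Then there is a sequence $z_k \to 0$ with $z_k \in S_\beta$ and $f(z_k) \not\to a$, i.e.~$|f(z_k) - a| \ge \e_0 > 0$ for some fixed $\e_0$. Write $r_k := |z_k| \to 0^+$ and $\theta_k := \arg z_k$, so $|\theta_k| \le \beta$. By passing to a subsequence, I may assume $\theta_k \to \theta_* \in [-\beta, \beta]$.

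\textbf{The normal families argument.} The key idea is to consider the rescaled functions $f_k(w) := f(r_k w)$, defined on the set of $w$ with $r_k w \in S_\gamma$. Since $|r_k| \to 0$, for any fixed compact subset $K$ of the sector $S_\gamma$ (or more precisely, of a suitable reference region, say an annular sector bounded away from $0$ and $\infty$), the functions $f_k$ are eventually defined on $K$ and uniformly bounded by $\sup_{S_\gamma} |f| < \infty$. By Montel's theorem the family $\{f_k\}$ is normal, so after passing to a further subsequence, $f_k \to g$ locally uniformly on this reference region for some bounded analytic $g$. The radial hypothesis $\lim_{x \to 0^+} f(x) = a$ translates, for each fixed real $t > 0$, into $f_k(t) = f(r_k t) \to a$; hence $g(t) = a$ for all $t > 0$ on the positive real axis inside the reference region. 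Since $g$ is analytic and constant on a ray (a set with a limit point), the identity theorem gives $g \equiv a$. But then $f_k(e^{i\theta_k}) = f(r_k e^{i\theta_k}) = f(z_k)$ should converge to $g(e^{i\theta_*}) = a$ (the evaluation points $e^{i\theta_k} \to e^{i\theta_*}$ lie in a compact subset of the reference region, where convergence is uniform), contradicting $|f(z_k) - a| \ge \e_0$.

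\textbf{Main obstacle.} The delicate point I expect is making the reference domain and the compactness bookkeeping precise: I must choose an annular sector, say $A := \{w : 1/2 < |w| < 2, \ |\arg w| < \gamma'\}$ with $\beta < \gamma' < \gamma$, so that (i) the rescaled evaluation points $e^{i\theta_k}$ and the real test points $t$ all lie in a fixed compact subset of $A$, and (ii) for all large $k$ the dilate $r_k \cdot A$ is contained in $S_\gamma$, guaranteeing $f_k$ is defined and bounded on $A$. The containment (ii) holds because $r_k \to 0$ keeps the modulus small while the argument constraint $\gamma' < \gamma$ leaves angular room; this is where the strict inequality $\beta < \gamma$ is essential. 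Once the reference annular sector is fixed, normality and the identity theorem finish the argument cleanly. I would also remark that the conclusion, after translating back via $w \mapsto r_k w$, shows the radial limit along \emph{every} ray $\arg z = \theta$ with $|\theta| < \gamma$ exists and equals $a$ uniformly, which is exactly non-tangential convergence in $S_\beta$.
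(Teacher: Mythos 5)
Your proof is correct and is precisely the ``standard normal families argument'' that the paper invokes (without details) for Proposition \ref{p:non-tangential}: rescaling by $|z_k|$, Montel's theorem on a fixed annular sector, the identity theorem along the real ray, and evaluation at $e^{i\theta_k}$ to reach a contradiction. The compactness bookkeeping you flag (choosing $\beta<\gamma'<\gamma$ and $1/2<|w|<2$) is handled correctly, so nothing is missing.
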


\begin{remark*}
The above ``radial limit'' can be taken along any ray in $S_\gamma$ originating from the origin. And, of course, the position and orientation of the sector is not important. 
\end{remark*}

\begin{remark}
\label{r:not dense range}
The condition that $f$ is bounded in $S_\gamma$ can be replaced by the assumption that the range of is not dense in $\C$, because in this case one can find a linear fractional transformation $\f$ such that $\f\circ f$ is bounded. 
\end{remark}

Let us now prove the non-tangential convergence of $P\ci E \f(z) P\ci E$ at $\lambda$. First of all, notice that $\re ( ( I_n - b(z)) \be, \be)\ci{\C^n} \ge 0$ for all $\be\in\C^n$. Second, notice that for a non-tangential approach regions $S=\{z\in \D : |\arg (z -{\lambda}) |<\gamma , \, |z-\lambda| <1\}$, $\gamma <\pi/2$, the values of $1/(1- \overline \lambda z)$ lie in a sector of aperture $2\gamma<\pi$. Therefore, for and $\be \in E$ the values of 
\begin{align}
\label{CAD 03}
\frac{( ( I_n - b(z)) \be, \be)\ci{\C^n} }{1- \overline \lambda z},   
\end{align}
$z\in S $,   lie in a sector with aperture less than $2\pi$, i.e.~the range is not dense in $\C$. Therefore (see Remark \ref{r:not dense range}), Proposition \ref{p:non-tangential} applies, and there is a non-tangential limit for any smaller approach region $S' = \{z\in \D : |\arg (z -{\lambda}) |<\beta , \, |z-\lambda| <1\} $, $\beta<\pi$. Since $\gamma<\pi/2$ can be arbitrary,   we get the non-tangential convergence of \eqref{CAD 03}
for any $\be\in E$ in the approach region of any aperture. 

Applying the polarization identity we get non-tangential convergence of \eqref{e-limitoperator}. 
This concludes the proof of Theorem \ref{t:CAD}.\hfill\qed

\subsection{An example}
\label{s:CAD counterexample}
It is a natural question to ask whether \emph{both} projections $P\ci E $ in \eqref{e-limit derivative} and \eqref{e-limitoperator} are really necessary. Below we will give an example of a function $b\in \mathscr{S}(2)$ such that the corresponding Clark measure $\bmu$ ($\bmu^\alpha$ with $\alpha = I_2$) has an atom at $1$, but the limits \eqref{e-limit derivative} and \eqref{e-limitoperator} (with $E=\ran \bmu\{\lambda\}$) fail to exist if one of the projections $P\ci E$ is missing. 

It is more convenient to work in the right half-plane $\C\ti r:=\{ z\in\C: \re z >0\}$. Let $\omega $, $\omega(z) = (1-z)/(1+z)$ be the standard conformal map from the unit disc $\D$ to $\C\ti r$. Note that $\omega^{-1} = \omega$, and $\omega (1) =0$. 
For an analytic function $f$ on $\D$ we denote by $\wt f$ its ``transplant'' to the  right half-plane $\C\ci r$, $\wt f := f\circ \omega^{-1} = f\circ \omega$ (recall that $\omega^{-1} =\omega$). 

Define the function $\wt H$ in $\C\ti r$ by
\begin{align}
\label{e: c-ex H}
\wt H(z) = \frac{1}{z} + \frac{1}{z^\gamma} + 1, \qquad z\in \C\ti r , 
\end{align}
where $0<\gamma <1$. 
Clearly $\wt H$ is Herglotz ($\re \wt H(z)\ge 0$), and, moreover on the imaginary axis
\begin{align*}
\re \wt H(ix) = |x|^{-\gamma} \cos(\gamma\pi/2) + 1. 
\end{align*}
Define
\begin{align*}
\wt \theta (z) :=   \frac{ \wt H(z) - 1}{ \wt H(z) + 1}.
\end{align*}
Clearly $|\wt\theta(z)|<1$ on $\C\ti R$, and, moreover, for small $x$ 
\begin{align}
\notag
|\wt\theta(ix)|^2 & =
\frac{(\im \wt  H(ix))^2 + (\re \wt  H(ix) -1)^2}{(\im \wt H(ix))^2 + (\re \wt H(ix) +1)^2} \\
\notag
&= 1 -\frac{4 \re \wt H(ix)}{(\im \wt H(ix))^2 + (\re \wt H(ix) +1)^2} \\
\label{e: theta< 01}
& \le 1- c |x|^{2-\gamma}. 
\end{align}
It is also easy to see that for any $\delta>0$ 
\begin{align}
\label{e: theta< 02}
|\wt\theta(ix)|^2 \le r(\delta) <1 
\end{align}
for all $|x|\ge \delta$. 

So, for  $0<\beta<1$ such that $2\beta> 2-\gamma$ and for sufficiently small $\e>0$ the matrix-valued function
\begin{align*}
\wt b(z) := 
\left(    \begin{array}{cc}
\wt\theta(z)  &  0\\ 
0 & 0
\end{array}  \right) 
+
\e \frac{z^\beta}{1+z} 
\left(    \begin{array}{cc}
0  &  1\\ 
1 &  1
\end{array}  \right) 
\end{align*}
is a strictly contractive one, i.e.~$\|\wt b(z)\|<1$ for $z\in\C\ti r$; by norm here we mean the operator norm.

Indeed, it is clear that $\wt b$ is a bounded analytic function in $\C\ti r$, so it is sufficient to show that $\|\wt b(ix)\|<1$ for all $x\ne 0$. We can estimate the bigger Frobenius (Hilbert--Schmidt) norm $\|\wit b(z)\|_2$. For small $x$ we get using \eqref{e: theta< 01} that
\begin{align}
\label{e: b < 01}
\| \wit b(z)\|_2^2 \le 1 - c|x|^{2-\gamma} + C\e^2   |x|^{2\beta}  .
\end{align}
Since $2\beta > 2-\gamma$ the right hand side is less than $1$ for sufficiently small $x$ and $\e$, i.e.~there exist $\e_0, \delta>0$ such that for all $|x|<\delta$ and $0<\e<\e_0$ the right hand side of \eqref{e: b < 01} is less than 1.  

For $|x|\ge \delta$ we use \eqref{e: theta< 02} and the fact that the function $(ix)^\beta /(1+ix)$ is bounded, to conclude that $\|\wit b(ix)\|_2^2 <1$ for sufficiently small  $\e$ (and for $|x|\ge\delta$). 

To see that the function $b:=\wit b \circ \omega$ gives us the desired example, we first notice 
that for an analytic function $f$ on $\D$ and $\wit f := f\circ \omega^{-1}= f\circ\omega$
\begin{align}
\label{e:lim 01}
\lim_{\xi\nt 1} (1-\xi) f(\xi) &= 2 \lim_{z\nt 0} z \wit f(z) \\
\intertext{and}
\label{e:lim 02}
\lim_{\xi\nt 1} \frac{f(\xi)}{(1-\xi)}  &= \frac12 \lim_{z\nt 0} \frac{\wit f(z)}{z} .
\end{align}
So, to verify the desired properties, we need to compute the limits of $(I_2-\wit b(z))/z$ and of its inverse as $z\nt 0$. 

It follows from the definition of $\wit \theta$ and \eqref{e: c-ex H} that for small $z$
\begin{align*}
1- \wit\theta (z) = \frac{2z}{1+ z^{1-\gamma} + 2z } = 2z\cdot (1+o(1)). 
\end{align*}

Then
\begin{align*}
I_2 - \wit b(z) & = 
\left(    \begin{array}{cc}
2z         &  -\e z^\beta \\ 
-\e z^\beta &  1 -\e z^\beta
\end{array}  \right) (1+o(1)) \\
&= 
\left(    \begin{array}{cc}
2z         &  -\e z^\beta \\ 
-\e z^\beta &  1 
\end{array}  \right) (1+o(1));
\intertext{the multiplication by $1+o(1)$ means that each entry of the matrix is multiplied by its own term $1+o(1)$.  Computing the inverse we get} 
( I_2 - \wit b(z) )^{-1} & = 
\frac{1}{2z}
\left(    \begin{array}{cc}
1         &  \e z^\beta \\ 
\e z^\beta &  2z
\end{array}  \right) (1+o(1)) \\
&= 
\frac12
\left(    \begin{array}{cc}
1/z         &  \e z^{\beta -1} \\ 
\e z^{\beta -1} &  2
\end{array}  \right) (1+o(1)). 
\end{align*}
Therefore, by Theorem \ref{t-Nevanlinna} we have for the Clark measure $\bmu$ (with $\alpha=I_2$) 
\begin{align*}
\bmu\{1\} = \lim_{\xi\nt 1} (1- \xi) \cdot (I_2 - b(\xi))^{-1} 
= 2 \lim_{z\nt 0} z \cdot (I_2-\wit b(z))^{-1} =
\left(    \begin{array}{cc}
1 &  0 \\ 
0 &  0
\end{array}  \right) ;
\end{align*}
here we used \eqref{e:lim 01} in the second equality. 

Similarly, we get using \eqref{e:lim 02} that 
\begin{align*}
\lim_{\xi\nt 1} (I_2-b(\xi))/(1-\xi) = \frac12 \lim_{z\nt 0} (I_2 - \wit b(z))/z =
\left(    \begin{array}{cc}
1 &  \infty \\ 
\infty &  \infty
\end{array}  \right) .
\end{align*}
This means that for $E=\ran \bmu\{1\}$ we have 
\begin{align*}
\lim_{\xi\nt 1} P\ci E \frac{(I_2-b(\xi))}{1-\xi} P\ci E= 
\left(    \begin{array}{cc}
1 &  0 \\ 
0 &  0
\end{array}  \right) , 
\end{align*}
but if we omit one of the projections  $P\ci E$, the (finite) limit does not exist. 
\hfill\qed

%%%%%%%%%%%%%%%%%%%%%%%%%%%%%%%%%%%%%%%%%%%%%%%%%%%
\section{Carath\'{e}odory angular derivatives and point masses}\label{s-CaraPoint}
%%%%%%%%%%%%%%%%%%%%%%%%%%%%%%%%%%%%%%%%%%%%%%%%%%%
We begin by presenting two results which compare the Carath\'{e}odory condition from Definition \ref{d-CCond} to the CAD, see Definition \ref{d-CAD}. 
In Theorems \ref{t-P} and \ref{t-CADimpliesCond}, we clarify the relation between the Carath\'{e}odory condition and the CAD.

Recall that the directional support of $\bmu^\alpha$ at $\lambda \in \mathbb{T}$ was introduced in \ref{d-direcionalsupport} to be ${\mathbf S}_\alpha (\lambda)= \left\{{\be}\in \C^n:\lim_{z\nt\lambda}b(z)^*\be  = \alpha^*\be \right\}.$
Now, consider the set
$${\mathbf E}_\alpha (\lambda):= \left\{\mbf{x}\in {\mathbf S}_\alpha (\lambda): b \text{ satisfies Carath\'{e}odory condition at }\lambda \text{ in codirection }\mbf{x}\right\}.$$
By Proposition \ref{p:CAD subspace}, ${\mathbf E}_\alpha (\lambda)$ is a subspace of $\C^n$.

\begin{thm}\label{t-P}
Function $b\alpha^*$ has 
CAD at $\lambda$ on the subspace $\mbf{E}_\alpha(\lambda)$, and 
\[
\left( \CAD\ci E (b\alpha^*)(\lambda)  \bx , \by \right)\ci{\C^n}  = \left\langle k^b_{\lambda, \bx}  , k^b_{\lambda, \by} \right\rangle_{\scr H(b)}.
\]
\end{thm}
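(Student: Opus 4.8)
The plan is to reduce to the case $\alpha=I_n$ and then read the matrix entries of the CAD directly off the reproducing-kernel structure of $\scr H(b)$. First I would set $g:=b\alpha^*$. Since $k^g=k^b$ we have $\scr H(g)=\scr H(b)$, and a short check shows the boundary kernels agree, $k^g_{\lambda,\bx}=k^b_{\lambda,\bx}$, because $g^*(\lambda,\bx)=\alpha\,b^*(\lambda,\bx)=\alpha\alpha^*\bx=\bx$ for $\bx\in\mathbf{S}_\alpha(\lambda)$. Moreover $\|g(z)^*\bx\|=\|b(z)^*\bx\|$, so $g$ satisfies the Carath\'eodory condition in codirection $\bx$ exactly when $b$ does, and $\mathbf{S}_\alpha(\lambda)$ for $b$ equals $\mathbf{S}_{I_n}(\lambda)$ for $g$. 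Hence $E:=\mathbf{E}_\alpha(\lambda)$ is precisely the Carath\'eodory subspace of $g$ at $\lambda$, and it suffices to prove, with $g$ in place of $b\alpha^*$, that $g$ has a CAD on $E$ and that $(\CAD_E g(\lambda)\bx,\by)_{\C^n}$ equals the stated kernel inner product; for $\bx\in E$ the boundary kernel is then $k^g_{\lambda,\bx}(z)=\bigl(\bx-g(z)\bx\bigr)/(1-z\overline\lambda)$.

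Next I would verify the limit condition \eqref{e-limite}. For $\bx\in E$ the function $k^g_{\lambda,\bx}=(\bx-g(z)\bx)/(1-z\overline\lambda)$ lies in $\scr H(g)\subset H^2(\C^n)$ by Proposition \ref{p-equivalences}. Every $H^2$ function is $O((1-|z|)^{-1/2})$, and in a Stolz region $1-|z|^2$ and $|1-z\overline\lambda|$ are comparable, so $\bx-g(z)\bx=O((1-|z|)^{1/2})\to0$, i.e. $\lim_{z\nt\lambda}g(z)\bx=\bx$. This is exactly where membership in $\mathbf{S}_\alpha(\lambda)$ is used: it collapses the numerator $\bx-g(z)g^*(\lambda,\bx)$ to $\bx-g(z)\bx$, converting the $H^2$-growth bound — which a priori only controls $g(z)^*\bx$ — into the non-adjoint statement \eqref{e-limite}.

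For the derivative limit and the formula I would compute, for $\bx,\by\in E$,
\[
\Bigl(\tfrac{(I_n-g(z))\bx}{1-z\overline\lambda},\by\Bigr)_{\C^n}=\bigl(k^g_{\lambda,\bx}(z),\by\bigr)_{\C^n}=\bigl\langle k^g_{\lambda,\bx},\,k^g_z\by\bigr\rangle_{\scr H(b)},
\]
using the reproducing property in the last step. Letting $z\nt\lambda$ and invoking the weak convergence $k^g_z\by\to k^g_{\lambda,\by}$ established in the proof of Proposition \ref{p-equivalences}, this tends to $\langle k^g_{\lambda,\bx},k^g_{\lambda,\by}\rangle_{\scr H(b)}=\langle k^b_{\lambda,\bx},k^b_{\lambda,\by}\rangle_{\scr H(b)}$. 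Since the limit exists for every pair $\bx,\by\in E$ and $E$ is finite dimensional, the compressed limit $\lim_{z\nt\lambda}P\ci{E}\frac{I_n-g(z)}{1-z\overline\lambda}P\ci{E}$ exists; multiplying by the unimodular factor from $z-\lambda=-\lambda(1-z\overline\lambda)$ gives \eqref{e-limitoperator}, so by the proposition following Definition \ref{d-CAD} (together with \eqref{e-limite}) $g$ has a CAD on $E$. Reading the sesquilinear form off the computation then yields the desired identity, the scalar factor $\overline\lambda$ being in agreement with the normalization $\CAD_E(b\alpha^*)(\lambda)^{[-1]}=\lambda\,\bmu^\alpha\{\lambda\}$ of Theorem \ref{t:CAD}.

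The main obstacle is the second step: the hypotheses control the boundary behavior of $b(z)^*$ (through $\mathbf{S}_\alpha(\lambda)$ and the Carath\'eodory condition), whereas \eqref{e-limite} is a statement about $b\alpha^*$ itself, and in the matrix setting $\lim g(z)^*\bx=\bx$ does not by itself force $\lim g(z)\bx=\bx$. The resolution is the observation that on $E\subset\mathbf{S}_\alpha(\lambda)$ the boundary-kernel numerator loses its adjoint, so the single $H^2$-growth bound delivers both the adjoint and the non-adjoint limits at once; everything after that is the routine reproducing-kernel identification carried out above.
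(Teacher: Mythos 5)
Your proposal is correct and follows essentially the same route as the paper's own proof: both use Proposition \ref{p-equivalences} together with the inclusion $\mbf{E}_\alpha(\lambda)\subset\bS_\alpha(\lambda)$ to identify the boundary kernel as $k^b_{\lambda,\bx}(z)=(I_n-b(z)\alpha^*)\bx/(1-z\overline\lambda)$ for $\bx\in\mbf{E}_\alpha(\lambda)$, then get the existence of the entrywise limits from statement (iii) of that proposition (your phrasing via the weak convergence $k^b_z\by\to k^b_{\lambda,\by}$ is the same mechanism), and finally use finite-dimensionality of $E$ to pass to the operator limit \eqref{e-limitoperator}, including the same bookkeeping of the unimodular factor relating $1-z\overline\lambda$ to $z-\lambda$. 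Your preliminary reduction to $\alpha=I_n$ and your explicit $H^2$-growth verification of \eqref{e-limite} are correct but not genuinely different steps, since they are subsumed by the equivalence in the proposition following Definition \ref{d-CAD}.
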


\begin{proof}
Let $\mbf{x}\in \mbf{E}_\alpha(\lambda)$. Then by Proposition \ref{p-equivalences} the vector-valued function
\begin{align}\label{e-kernel}
k^b_{\lambda, \bx} (z) = \frac{\bx - b(z)\wt\bx}{1-z\overline\lambda}
\end{align}
with $\wt\bx = b^*(\lambda, \bx)$ belongs to $\scr H(b)$, and since  $\mbf{E}_\alpha(\lambda)\subset {\mathbf S}_\alpha (\lambda)$ we have
\begin{align}\label{e-yalpha}
\wt\by = b^*(\lambda, \bx) =\lim_{z\nt \lambda} b(z)^*\bx= \alpha^* \bx.
\end{align}
Now, take $\mbf{y}\in \mbf{E}_\alpha(\lambda)$. Then the non-tangential limit
\begin{align*}
\lim_{z\nt \lambda} \ipcn{\mbf{y}}{k^b_{\lambda, \bx} (z)}
\end{align*}
exists by Statement \cond3 of Proposition \ref{p-equivalences}. Using \eqref{e-kernel} and \eqref{e-yalpha}, we obtain the existence of
\begin{align*}
\lim_{z\nt \lambda} \ipcn{\mbf{y}}{\frac{I_n - b(z)\alpha^*}{1-z\overline\lambda}\bx }
\qquad \text{for all }\mbf{x}, \by\in \mbf{E}_\alpha(\lambda).
\end{align*}
This implies the existence of 
\begin{align*}
\lim_{z\nt \lambda} 
P\ci{\mbf{E}_\alpha(\lambda)}\frac{I_n - b(z)\alpha^*}{1-z\overline\lambda}
P\ci{\mbf{E}_\alpha(\lambda)},
\end{align*}
as was claimed.
\end{proof}

\begin{thm}\label{t-CADimpliesCond}
If $b\alpha^*$ has 
CAD on some subspace $E$ of $\C^n$, then for all $\mbf{x}\in E$, $P\ci{E}b\alpha^*|\ci{E}$ satisfies the Carath\'{e}odory condition in the codirection $\mbf{x}$.
\end{thm}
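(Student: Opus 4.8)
The plan is to pass to the compression of $b\alpha^*$ onto $E$ and then run the elementary Julia--Carath\'eodory estimate there. First I would reduce to the case $\alpha=I_n$ by replacing $b$ with $b\alpha^*$; after this reduction the hypothesis reads ``$b$ has a CAD on $E$'' and the goal is to show that $c(z):=P\ci E\, b(z)|_E$, viewed as a purely contractive analytic operator-valued function on $E$ (note $\|c(z)\|\le\|b(z)\|<1$), satisfies the Carath\'eodory condition in every codirection $\bx\in E$. Spelling out Definition \ref{d-CCond} for $c$, and using that the adjoint of the compression is $c(z)^*\bx=P\ci E\, b(z)^*\bx$ for $\bx\in E$, this amounts to showing
\[
\liminf_{z\nt\lambda}\frac{\|\bx\|^2-\|c(z)^*\bx\|^2}{1-|z|^2}<\infty,\qquad \bx\in E.
\]

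Next I would feed in the CAD hypothesis through the characterization \eqref{e-limitoperator}. Since $b$ has a CAD on $E$, the equivalence proved right after Definition \ref{d-CAD} guarantees that $A:=\lim_{z\nt\lambda}\frac{c(z)-I_E}{z-\lambda}$ exists; in particular the difference quotient stays bounded in every Stolz region $\Gamma_t(\lambda)$, so that $\|D(z)\|\le C\,|z-\lambda|$ there, where $D(z):=I_E-c(z)$. Expanding $I_E-c(z)c(z)^*=D(z)+D(z)^*-D(z)D(z)^*$ and pairing against $\bx$ gives
\[
\|\bx\|^2-\|c(z)^*\bx\|^2=2\,\re\,(D(z)\bx,\bx)_{\C^n}-\|D(z)^*\bx\|^2\le 2\,\|D(z)\|\,\|\bx\|^2=O(|z-\lambda|),
\]
where I have discarded the nonpositive term $-\|D(z)^*\bx\|^2$. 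Observe that this quantity is automatically $\ge 0$ because $c(z)$ is a contraction, so only an upper bound is needed.

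Finally I would close with the nontangential geometry: for $z\in\Gamma_t(\lambda)$ one has $|z-\lambda|<t(1-|z|)\le t(1-|z|^2)$, so dividing the displayed bound by $1-|z|^2$ shows that $\frac{\|\bx\|^2-\|c(z)^*\bx\|^2}{1-|z|^2}$ is bounded throughout the Stolz region. This yields not merely a finite $\liminf$ but in fact a finite $\limsup$, i.e.\ the stronger Carath\'eodory condition of part $\mr{(iv)}$ of Proposition \ref{p-equivalences}. The only real obstacle is the bookkeeping at the outset: correctly identifying $c$ and its adjoint and translating the CAD hypothesis into the bound $\|I_E-c(z)\|=O(|z-\lambda|)$ via \eqref{e-limitoperator}. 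Once that is in place the estimate is immediate, since contractivity makes the numerator nonnegative and the nontangential condition converts the $O(|z-\lambda|)$ bound into an $O(1-|z|^2)$ bound.
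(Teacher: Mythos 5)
Your proof is correct and follows essentially the same route as the paper: both arguments translate the CAD hypothesis (via \eqref{e-limitoperator}) into the bound $\|\bx - P\ci{E}\alpha b(z)^*\bx\| = O(|z-\lambda|)$ in Stolz regions, and then use the comparability of $|z-\lambda|$ with $1-|z|^2$ there to conclude the stronger $\limsup$ form of the Carath\'{e}odory condition. The only cosmetic difference is the intermediate algebra: you expand $I\ci{E}-c(z)c(z)^* = D(z)+D(z)^*-D(z)D(z)^*$ and discard the nonpositive term, while the paper applies the elementary inequality $\|\mbf{u}\|^2-\|\mbf{v}\|^2\le(\|\mbf{u}\|+\|\mbf{v}\|)\|\mbf{u}-\mbf{v}\|$ with $\mbf{u}=\bx$, $\mbf{v}=P\ci{E}\alpha b(z)^*\bx$.
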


\begin{proof}
Without loss of generality assume $\alpha=I_n$. By the hypothesis, the non-tangential limit
\begin{align*}
\lim_{z\nt \lambda} 
P\ci{E}\frac{I_n - b(z)\alpha^*}{1-z\overline\lambda}
P\ci{E}
\end{align*}
exists. So, for all $\mbf{x}\in E$, the limit
\begin{align*}
\lim_{z\nt \lambda} 
P\ci{E}\frac{I_n - \alpha b(z)^*}{1-\overline z\lambda}
\mbf{x}
\end{align*}
exists, and so is the limit of the norm, 
\begin{align}
\label{e:lim norm}
\lim_{z\nt \lambda}  \frac{\| \bx - P\ci E\alpha b(z)^* \bx \|}{|1-\overline z\lambda|} <\infty. 
\end{align}

By the triangle inequality $\|\mbf u\|-\|\mbf v\| \le \| \mbf u - \mbf v \|$, so 
\[
\|\mbf{u}\|^2-\|\mbf{v}\|^2\le (\|\mbf{u}\|+\|\mbf{v}\|) \|\mbf{u}-\mbf{v}\| .
\]
Applying this to $\mbf{u} = \mbf{x}$ and $\mbf{v} = P\ci{E} \alpha b(z)^*\mbf{x}$, and using the fact that 
\[
\|P\ci{E} \alpha b(z)^*\mbf{x}\|\le \|\alpha b(z)^*\mbf{x}\| = \| b(z)^*\mbf{x}\| \le \|\bx\|, 
\]
we have
\begin{align*}
\|\mbf{x}\|^2 - \|P\ci{E} \alpha b(z)^*\mbf{x}\|^2 \le
2\|\mbf{x}\| \|\mbf{x} - P\ci{E} \alpha b(z)^*\mbf{x}\| .
\end{align*}
Since in a non-tangential approach region to $\lambda$ the quantities $1-|z|^2$ and $| 1 - \overline z \lambda |$ are compatible, 
we have 
\begin{align*}
\limsup_{z\nt \lambda} \frac{\|\mbf{x}\|^2 - \|P\ci{E} \alpha b(z)^*\mbf{x}\|^2} {1-|z|^2} 
& \le 2 \|\bx\| 
\limsup_{z\nt \lambda}
\frac{ \|\bx - P\ci{E} \alpha b(z)^*\mbf{x}\|} {  1- | z |^2 } \\
& \le C \|\bx\| 
\limsup_{z\nt \lambda}
\frac{ \|\bx - P\ci{E} \alpha b(z)^*\mbf{x}\|} { | 1-\overline z\lambda | } < \infty
\end{align*}
for some $0<C<\infty$; here, the boundedness of the final expression follows from \eqref{e:lim norm}.  

The resulting estimate 
\[
\limsup_{z\nt \lambda} \frac{\|\mbf{x}\|^2 - \| P\ci{E}\alpha b(z)^*\mbf{x}\|^2} {1-|z|^2} < \infty
\]
is clearly stronger than the Carath\'{e}odory condition \eqref{e:Caratheodory cond} ($\liminf\le\limsup$). 
\end{proof}

\begin{thm}\label{t-E}
We have $\ran{\bmu^\alpha\{\lambda\}} = \mbf{E}_\alpha(\lambda).$
\end{thm}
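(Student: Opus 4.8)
The plan is to prove the two inclusions $\ran{\bmu^\alpha\{\lambda\}} \subseteq \mbf{E}_\alpha(\lambda)$ and $\mbf{E}_\alpha(\lambda) \subseteq \ran{\bmu^\alpha\{\lambda\}}$ separately, leveraging the results already assembled in the preceding sections. As usual (Remark \ref{r:b(0)=0}), I would first reduce to the case $\alpha = I_n$, so that $\mbf{E}_\alpha(\lambda) = \mbf{E}_{I_n}(\lambda)$ and $\bmu^\alpha = \bmu$. Throughout I would set $E := \ran{\bmu\{\lambda\}}$.

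For the inclusion $\ran{\bmu\{\lambda\}} \subseteq \mbf{E}_\alpha(\lambda)$, the heavy lifting is already done by Theorem \ref{t:CAD}: if $\bmu\{\lambda\}\neq 0$ with $\ran{\bmu\{\lambda\}} = E$, then $b$ has a CAD on the subspace $E$. By Theorem \ref{t-CADimpliesCond}, existence of the CAD on $E$ forces the Carath\'{e}odory condition in every codirection $\bx \in E$. Moreover, I must verify that each such $\bx$ actually lies in $\mbf{S}_\alpha(\lambda)$, i.e.~that $\lim_{z\nt\lambda} b(z)^*\bx = \alpha^*\bx = \bx$; this is precisely the content of the defining limit \eqref{e-limite} in the CAD definition (since the CAD on $E$ requires $\lim_{z\nt\lambda} b(z)\be = \be$ for all $\be\in E$, and the Carath\'{e}odory condition guarantees the existence of $\lim_{z\nt\lambda} b(z)^*\bx$ by Proposition \ref{p-equivalences}). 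Combining the membership in $\mbf{S}_\alpha(\lambda)$ with the Carath\'{e}odory condition gives $\bx \in \mbf{E}_\alpha(\lambda)$, establishing the first inclusion.

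For the reverse inclusion $\mbf{E}_\alpha(\lambda) \subseteq \ran{\bmu\{\lambda\}}$, I would invoke Theorem \ref{t-P}: $b\alpha^*$ has a CAD on $\mbf{E}_\alpha(\lambda)$, and for $\bx, \by \in \mbf{E}_\alpha(\lambda)$ one has
\[
\left(\CAD\ci{\mbf{E}_\alpha(\lambda)} (b\alpha^*)(\lambda)\, \bx, \by\right)\ci{\C^n} = \left\langle k^b_{\lambda,\bx}, k^b_{\lambda,\by}\right\rangle_{\scr H(b)}.
\]
The key point is to connect this CAD back to the point mass. On $E = \ran{\bmu\{\lambda\}}$, Theorem \ref{t:CAD} identifies the Moore--Penrose inverse of the CAD with $\lambda\bmu\{\lambda\}$, so the CAD is (up to the factor $\bar\lambda$) the inverse of the restriction of $\bmu\{\lambda\}$ to $E$, hence invertible on $E$. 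The strategy is to show that the CAD on $\mbf{E}_\alpha(\lambda)$ cannot be ``larger'' than the one on $E$: if some $\bx \in \mbf{E}_\alpha(\lambda)$ had a nonzero component orthogonal to $E$, the boundary reproducing kernel $k^b_{\lambda,\bx}$ would produce a nontrivial contribution to the Gram-type form above that is incompatible with the finiteness/structure dictated by $\bmu\{\lambda\}$ supported on $E$.

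The main obstacle, I expect, is exactly this second inclusion: showing that the Carath\'{e}odory codirections lying in $\mbf{S}_\alpha(\lambda)$ cannot escape the range of the point mass. The counterexample in Section \ref{s:CAD counterexample} warns that the interaction between directions can be delicate (the off-diagonal blowup there shows why both projections are needed), so the argument must carefully use that a genuine point mass at $\lambda$ concentrates the relevant boundary data on $E$. I anticipate the cleanest route is to express the point mass via the boundary reproducing kernels --- using $\bmu\{\lambda\}\bx$ recovered from $k^b_{\lambda,\bx}$ through the Clark/Cauchy-transform machinery of Subsection \ref{s:Clark} together with the Nevanlinna formula of Theorem \ref{t-Nevanlinna} --- and thereby show that $\bx\in\mbf{E}_\alpha(\lambda)$ forces $\bmu\{\lambda\}\bx \neq 0$ whenever $\bx \neq 0$, pinning $\mbf{E}_\alpha(\lambda)$ inside $E$.
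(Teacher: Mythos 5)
Your first inclusion, $\ran{\bmu^\alpha\{\lambda\}}\subseteq\mbf{E}_\alpha(\lambda)$, is correct and is exactly the paper's argument: Theorem \ref{t:CAD} gives the CAD on $\ran{\bmu^\alpha\{\lambda\}}$, Theorem \ref{t-CADimpliesCond} then gives the Carath\'{e}odory condition in every codirection in that range (note $\|P\ci{E}\alpha b(z)^*\bx\|\le\|b(z)^*\bx\|$, so the condition for the compression implies it for $b$ itself), and membership in $\mbf{S}_\alpha(\lambda)$ follows along the lines you indicate, or even more directly from Proposition \ref{p-sing}, since an atom carries positive $\mu^\alpha\ti{s}$-mass.

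The genuine gap is the reverse inclusion $\mbf{E}_\alpha(\lambda)\subseteq\ran{\bmu^\alpha\{\lambda\}}$: you correctly flag it as the hard part, but you supply no argument for it, and the paper's proof rests on a substantive tool that your proposal never identifies, namely Lemma \ref{l:de Branges isometry}: the operator $\Psi f=(I_n-b)\,\cC[\bmu f]$ is unitary (in particular surjective) from $L^2_+(\bmu)$ onto $\scr H(b)$. With it the proof is short: for $\bx\in\mbf{E}_\alpha(\lambda)$ (after reducing to $\alpha=I_n$), Proposition \ref{p-equivalences} puts $k^b_{\lambda,\bx}(z)=(1-z\overline\lambda)^{-1}(I_n-b(z))\bx$ in $\scr H(b)$; surjectivity of $\Psi$ gives $f\in L^2_+(\bmu)$ with $\Psi f=k^b_{\lambda,\bx}$; cancelling the invertible factor $I_n-b(z)$ shows $\cC[\bmu f]=\cC[\bx\,\delta_\lambda]$, which forces $f=\1\ci{\{\lambda\}}\wt\bx$ with $\bmu\{\lambda\}\wt\bx=\bx$, i.e.\ $\bx\in\ran{\bmu\{\lambda\}}$. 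Neither of your sketched substitutes does this job. First, Theorem \ref{t-P} alone yields no contradiction: the Gram identity is finite and perfectly consistent on all of $\mbf{E}_\alpha(\lambda)$ whatever that subspace is; the claimed ``incompatibility'' of extra codirections with the structure of $\bmu\{\lambda\}$ is precisely the statement to be proved, not a tool for proving it. Second, the hoped-for derivation of $\bmu\{\lambda\}\bx\ne 0$ from the Nevanlinna formula has no visible implementation: the natural attempt is to pass to the limit in the identity
\[
\bigl[(1-z\overline\lambda)(I_n-b(z))^{-1}\bigr]\,k^b_{\lambda,\bx}(z)\equiv\bx,
\]
using Theorem \ref{t-Nevanlinna} for the first factor, but in the matrix case the second factor need not have a nontangential limit --- the example of Section \ref{s:CAD counterexample} exhibits exactly this blow-up (the entry $\e z^{\beta-1}$), which is why both projections $P\ci{E}$ are needed there --- so the product cannot be split; and the scalar Julia-type Poisson-kernel argument does not transfer, because the Carath\'{e}odory condition controls $\|\bx\|^2-\|b(z)^*\bx\|^2$ while the point mass is governed by $\|\Delta(z)(I_n-b(z))^{-1}\bx\|^2$, quantities that are comparable only thanks to commutativity when $n=1$. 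Finally, even granting $\bmu\{\lambda\}\bx\ne0$ for every $0\ne\bx\in\mbf{E}_\alpha(\lambda)$, your phrase ``pinning $\mbf{E}_\alpha(\lambda)$ inside $E$'' still needs a (short) extra step: since $\bmu\{\lambda\}\ge0$ its kernel is $E^\perp$, and for $\bx\in\mbf{E}_\alpha(\lambda)$ the vector $\bx-P\ci{E}\bx$ lies in $\mbf{E}_\alpha(\lambda)\cap E^\perp$ (by the first inclusion and Proposition \ref{p:CAD subspace}), hence vanishes. In short: the easy half matches the paper, but the hard half is missing its key lemma, and the routes you propose in its place do not go through as stated.
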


\begin{proof}
By Theorem \ref{t:CAD}, the CAD of $b\alpha^*$ at $\lambda$  exists on the subspace $\ran{\bmu^\alpha\{\lambda\}}$. Therefore,  by Theorem \ref{t-CADimpliesCond} the Carath\'{e}odory condition is satisfied in the codirection $\mbf{x}$ for all $\mbf{x}\in \ran{\bmu^\alpha\{\lambda\}}$. Thus, we obtain 
\begin{align*}
\ran{\bmu^\alpha\{\lambda\}}
\subset
\mbf{E}_\alpha(\lambda).
\end{align*}

Let us prove the reverse inclusion. Renaming $b\alpha^*$ by $b$ we can always assume $\alpha=I_n$ and skip the index $\alpha$.  

Define an operator $\Psi$ 
acting from $L^2(\bmu)$ to the space of analytic $\C^n$-valued functions on the unit disc $\D$, 
\begin{align}
\label{e:Phi^* 01}
\Psi f (z) := (I_n-b(z))  \cC[\bmu f](z)= (I_n-b(z)) \int_\T \frac{1}{1- z\overline  \xi} (\dd \bmu(\xi)) f(\xi);
\end{align}
here $\bmu$ is the Clark measure $\bmu^\alpha$ with $\alpha=I_n$ for $b$. 

Denote by $L^2_+(\bmu)$, the closure in $L^2(\bmu)$ of linear combinations of analytic in $\D$ vectorial rational fractions $r_{w, \bx}$
\begin{align*}
r_{w, \bx}(\xi) := \frac{\bx}{1-\overline w \xi}    , \qquad w\in\D, \ \bx\in \C^n.  
\end{align*}

Let $\bx\in \mbf{E}(\lambda)$, where recall $\mbf{E}(\lambda)=\mbf{E}_\alpha(\lambda)$ with $\alpha=I_n$. Then we have $\lim_{z\nt \lambda} b(z)^*\bx  = \mbf{x}$ and by Proposition \ref{p-equivalences}, the \emph{boundary reproducing kernel} $k_{\lambda,\bx}^b$ is given by 
\begin{align}
\label{e:k^b_la}
k_{\lambda,\bx}^b (z) = \frac{I_n - b(z)}{1 - z\overline \lambda} \bx .
\end{align}
By Lemma \ref{l:de Branges isometry} below, $\Psi: L^2_+(\bmu)\to \scr H(b)$ is a unitary operator. So, we see that
\begin{align*}
k_{\lambda,\bx}^b = \Psi f
\end{align*}
for some $f\in L^2_+(\bmu)$. 

Comparing the formula \eqref{e:Phi^* 01} for $\Psi$ with the formula \eqref{e:k^b_la} for $k_{\lambda,\bx}^b$ we can conclude that $f$ is supported at the point $\lambda$ only. More precisely $f = \1\ci{\{\lambda\}} \wt\bx$, where $\bmu\{\lambda\} \wt\bx =\bx$. But this exactly means that $\bx\in\ran \bmu\{\lambda\}$. 
\end{proof}

\begin{remark*}
If $b(0)=0$ the operator $\Psi$ used in this proof is exactly the first component of the adjoint Clark operator $\Phi^*$ from \cite{LTFinite}, which we also used in \eqref{Phi*}. For the general case (when $b(0)\neq 0$) it differs slightly from the one in \cite{LTFinite} as the measure there does not have the same normalization as ours.
\end{remark*}

\begin{lemma}
\label{l:de Branges isometry}
The operator $\Psi$, restricted to $L^2_+(\bmu)$ is a unitary operator between $L^2_+(\bmu)$ and the de Branges--Rovnyak space $\scr H (b)$. 
\end{lemma}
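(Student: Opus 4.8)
The plan is to show that the operator $\Psi$ defined in \eqref{e:Phi^* 01} is unitary from $L^2_+(\bmu)$ onto $\scr H(b)$ by identifying it, up to the change of normalization indicated in the last remark, with the first component of the adjoint Clark operator $\Phi^*$ from \cite{LTFinite}. Recall from \eqref{Phi*} that in the case $b(0)=0$ the operator $\Phi^*$ maps $L^2(\bmu)$ \emph{isometrically} onto $\cK_b \subset L^2(\C^n)\oplus L^2(\C^n)$, and that its first component is precisely $f\mapsto (\cC\bmu)_+^{-1}(\cC[\bmu f])_+ = (I_n - b)\,\cC[\bmu f]$, using the identity $(\cC\bmu)(z) = (I_n - b(z))^{-1}$ from \cite[Lemma 8.6]{LTFinite} that was already quoted above. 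Thus $\Psi$ is literally the first component $P_1\Phi^*$ of the isometry $\Phi^*$.

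The key steps are as follows. First I would reduce to $b(0)=0$ exactly as in Remark \ref{r:b(0)=0}, so that formula \eqref{Phi*} applies directly and the measure $\bmu$ coincides with the one in \cite{LTFinite}. Second, I would establish that $\Psi$ is an \emph{isometry} on $L^2_+(\bmu)$: since $\Phi^*$ is an isometry on all of $L^2(\bmu)$, it suffices to show that on the subspace $L^2_+(\bmu)$ the second component of $\Phi^*$ contributes nothing to the norm, i.e.\ that the full norm is carried by the first component $\Psi$. Concretely, for $f\in L^2_+(\bmu)$ I expect the term $\Delta(\cC\bmu)_+ f$ together with the $\Delta$-component of the second summand in \eqref{Phi*} to vanish or to be absorbed, because $f$ lies in the ``analytic'' part and the anti-analytic/defect pieces decouple; this is where I would compute the two components of $\Phi^* f$ for $f = r_{w,\bx}$ and check that the second component drops out in the $\scr H(b)$-norm. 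Third, I would compute $\Psi r_{w,\bx}$ explicitly: using $\cC[\bmu\, r_{w,\bx}](z) = \cC\bmu(z)\,\bx/(1-\overline w z)$-type manipulations and $(\cC\bmu) = (I_n-b)^{-1}$, one finds that $\Psi r_{w,\bx}$ is a multiple of the ordinary reproducing kernel $k^b_w\bx$ of $\scr H(b)$, which both lands in $\scr H(b)$ and generates a dense subspace of it (density was noted just after \eqref{e-reproducing}). Fourth, surjectivity onto $\scr H(b)$ follows because the images $\Psi r_{w,\bx}$ span a dense subspace and $\Psi$ is an isometry with closed range.

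The main obstacle I anticipate is the isometry step, specifically verifying that restricting to $L^2_+(\bmu)$ kills the second ($\clos\Delta L^2$) component's contribution to the norm, so that $\|\Psi f\|_{\scr H(b)} = \|\Phi^* f\|_{\cK_b} = \|f\|_{L^2(\bmu)}$. This requires understanding how the analytic rational fractions $r_{w,\bx}$ interact with the defect operator $\Delta$ inside \eqref{Phi*}, and in particular identifying $\ran(\Psi) = P_1(\Phi^* L^2_+(\bmu))$ with $\scr H(b)$ via the standard description of $\scr H(b)$ as $P_1\cK_b$ with the range norm. A careful bookkeeping of the two components of $\Phi^*$ on the dense generating set, rather than an abstract argument, is what I would rely on to close this gap.
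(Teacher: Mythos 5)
Your identification of $\Psi$ with the first component of the adjoint Clark operator $\Phi^*$ is correct (the paper itself points this out in the remark following the proof of Theorem \ref{t-E}), but your argument has a genuine gap precisely at the step you flag as the main obstacle, and it is not a routine one. Unitarity of $\Phi^*:L^2(\bmu)\to\cK_b$ together with the model-theoretic fact that the first-component map $(g_1,g_2)\mapsto g_1$ carries $\cK_b$ onto $\scr H(b)$ with the range norm only gives that $\Psi$ is a \emph{contraction} from $L^2_+(\bmu)$ into $\scr H(b)$: that first-component map is a coisometry, not an isometry, so its composition with the isometry $\Phi^*$ restricted to a subspace is isometric only if $\Phi^*(L^2_+(\bmu))$ is orthogonal to the kernel $\{(0,g_2)\in\cK_b\}$ of that map --- equivalently, only if for each $f\in L^2_+(\bmu)$ the second component of $\Phi^*f$ is the one of minimal norm over the fiber of its first component. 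You do not prove this; moreover, the auxiliary fact itself (the coisometry property of the first-component map, in the matrix-valued and possibly non-extreme setting) is neither stated in the paper nor contained in the part of \cite{LTFinite} that the paper quotes, so it would require its own proof or reference.

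Second, the computation by which you intend to close that gap rests on an identity that is false: in general $\cC[\bmu\, r_{w,\bx}](z)\neq \cC\bmu(z)\bx/(1-\overline{w}z)$ (try the scalar point mass $\bmu=\delta_1$: the left side is $((1-\overline{w})(1-z))^{-1}$, the right side is $((1-z)(1-\overline{w}z))^{-1}$). If that identity were true, you would get $\Psi r_{w,\bx}(z)=(I_n-b(z))\,\cC\bmu(z)\,\bx/(1-\overline{w}z)=\bx/(1-\overline{w}z)$, the Szeg\H{o} kernel, which in general does not even lie in $\scr H(b)$ (e.g.\ for inner $b$). The correct computation requires the splitting identity
\begin{align*}
\frac{1}{(1-\overline{w}\xi)(1-z\overline{\xi})}
= \frac{1}{2(1-\overline{w}z)}\left( \frac{1+\overline{w}\xi}{1-\overline{w}\xi} + \frac{1+z\overline{\xi}}{1-z\overline{\xi}}\right),
\qquad \xi\in\T,
\end{align*}
combined with the Herglotz representation \eqref{e-Herglotz}; it yields $\Psi r_{w,\bx}= k^b_w(I_n-b(w)^*)^{-1}\bx$ --- note the unavoidable appearance of $b(w)^*$ --- and it gives the isometry identity on kernels at the same time. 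This is exactly the paper's proof (equations \eqref{e:Phi^* k_w} and \eqref{e:Phi^* isometry}), so an honest completion of your plan reproduces the paper's direct computation, at which point the detour through $\Phi^*$ and $\cK_b$ buys nothing. A further defect: your reduction to $b(0)=0$ via Remark \ref{r:b(0)=0} is not available here, since that remark replaces $b(z)$ by $\overline{\lambda}zb(z)$, which preserves only the local data at $\lambda$ (the point mass and the CAD limits), whereas the Lemma is a global statement about $\bmu$ and $\scr H(b)$; the paper's computation handles $b(0)\neq 0$ directly, because the terms $i\im H(0)$ cancel in the sum $\cC_2\bmu(w)^*+\cC_2\bmu(z)$.
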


\begin{proof}
Define 
\begin{align*}
\wit k_w(\xi) := \frac{I_n - b(w)^*}{1-\overline{w}\xi}, \qquad w\in\D, \ \xi\in\T. 
\end{align*}
We will show that for any $\bx\in\C^n$ there holds 
\begin{align}
\label{e:Phi^* k_w}
\Psi \wit k_w\bx  &= k^b_w \bx  = \frac{I_n- b(\xi)b(w)^*}{1- w\overline\xi}\bx ,
\intertext{where, recall,  $k_w^b$ is the reproducing kernel for the de Branges--Rovnyak space $\scr H(b)$, and}
\label{e:Phi^* isometry}
\left\langle \wit k_w\bx , \wit k_z\by \right\rangle_{L^2(\bmu)} & =    \left(   \frac{I_n- b(z)b(w)^*}{1- w\overline z}\bx, \by          \right)_{\C^n}
=  \left\langle  k_w^b\bx ,  k_z^b\by \right\rangle_{\scr H(b)}.
\end{align}
That means $\Psi$ is an isometry from a dense set in $L^2_+(\bmu)$ (linear combinations of functions $\wt k_w \bx$) to $\scr H(b)$. The subspace $\Psi L^2_+(\bmu)$ contains linear combinations of functions  $k^b_w \bx$, $w\in\D$, $\bx\in\C^n$, and such linear combinations are dense in $\scr H(b)$. Indeed, if $f\in\scr H(b) $ is orthogonal to all $k^b_w\bx$, then 
\[
\left(f(w), \bx\right)\ci{\C^n} = \langle f, k_w^b\bx\rangle\ci{\scr H(b)} = 0, 
\]
for all $w\in \D$ and all $\mbf{x}\in \C^n$. So we have $f\equiv 0$. 

So, $\Psi :L^2_+(\bmu)\to\scr H(b)$ is indeed a unitary operator.

To prove \eqref{e:Phi^* k_w} let us compute $\Psi \wt k_w$ by computing $\Psi \wt k_w \be_k$, where $(\be_k)_{k=1}^n$ is the standard basis in $\C^n$. 
Using the formula
\begin{align*}
\frac{1}{1-\overline{w}\xi} \cdot \frac{1}{1- z\overline\xi} = 
\frac{1}{1-\overline{w} z } \cdot\frac12\cdot
\left( \frac{1 + \overline{w}\xi}{1-\overline{w}\xi} + \frac{1 + z\overline\xi}{1- z\overline\xi} 
\right)
\, , \ 
\end{align*}
where $w, z\in\D$, $\xi\in\T$, 
we can see that
\begin{align}
\label{e:Phi^* k_w 02}
\notag
[\Psi \wit k_w  ](z) & =  (I_n-b(z)) \left( \int_\T   \frac{1}{1-\overline{w}\xi} \cdot \frac{1}{1- z\overline\xi} \dd\bmu(\xi) \right) (I_n- b(w)^*) \\
&=  \frac12 \cdot\frac{I_n- b(z)}{1-\overline{w} z } 
\left( \cC_2 \bmu(w)^* + \cC_2\bmu(z) 
\right)  (I_n - b(w)^*) ,
\end{align}
where $\cC_2$ is the 
Herglotz transform, 
\begin{align*}
\cC_2\nu (z):= \int_\T \,\frac{1 + z\overline \xi}{1 - z\overline \xi} \, \dd\nu(\xi).
\end{align*}
Using \eqref{e-Herglotz} we see that 
\[
\cC_2\bmu (z) = \frac{I_n + b(z)}{I_n- b(z)} - i\im  \frac{I_n + b(0)}{I_n- b(0)}\,, 
\]
(we write the product $(I_n + b(z))(I_n - b(z))^{-1}$ as fraction to emphasize that terms commute),  so
\[
\cC_2 \bmu(w)^* + \cC_2\bmu(z) =   \frac{I_n + b(w)^*}{I_n- b(w)^*} +     \frac{I_n + b(z)}{I_n- b(z)} \, .
\]
Therefore
\[
(I_n- b(z)) \left( \cC_2 \bmu(w)^* + \cC_2\bmu(z)   \right) (I_n - b(w)^*) =
2\cdot (I_n - b(z) b(w)^* ), 
\]
and \eqref{e:Phi^* k_w 02} gives us 
\begin{align}
\label{e:Phi^* k_w 03}
[\Psi \wit k_w  ](z) = \frac{I_n - b(z)b(w)^*}{1 - z\overline w}  = k^b_w , 
\end{align}
so \eqref{e:Phi^* k_w} is proved. 

To prove \eqref{e:Phi^* isometry} let us notice that 
\[
\left\langle \wt k_w\bx , \wt k_z \by   \right\rangle_{L^2(\bmu)}  = \left( [\Psi \wt k_w] (z) \bx, \by  \right)_{\C^n} .
\]
But $\Psi \wt k_w$ is already computed, see \eqref{e:Phi^* k_w 03}, so \eqref{e:Phi^* isometry} follows immediately. 
\end{proof}

\begin{remark*}
In the above proof it is essential that both $z, w\in \D$. 
\end{remark*}

\end{document}